\title{\bf Intuitionistic monotone modal logic via translation}
\author{Jim de Groot}
\date{}
  \theoremstyle{definition}
    \newtheorem{para}{}[section]
    \newtheorem{definition}[para]{Definition}
    \newtheorem{example}[para]{Example}
    \newtheorem{remark}[para]{Remark}
  \theoremstyle{plain}
    \newtheorem{lemma}[para]{Lemma}
    \newtheorem{corollary}[para]{Corollary}
    \newtheorem{theorem}[para]{Theorem}
    \newtheorem{proposition}[para]{Proposition}
\titleformat{\section}{\large\centering\bfseries}{\thesection}{1em}{}
\titleformat{\subsection}{\large\bfseries}{\thesubsection}{1em}{}
\newcommand{\mc}[1]{\mathcal{#1}}
\newcommand{\mb}[1]{\mathbb{#1}}
\newcommand{\ms}[1]{\mathscr{#1}}
\newcommand{\mo}[1]{\mathcal{#1}}  
\newcommand{\fomo}[1]{\mathfrak{#1}} 
\newcommand{\lan}[1]{\mathbf{#1}}   
\renewcommand{\log}[1]{\mathsf{#1}}   
\newcommand{\fun}[1]{\mathcal{#1}}
\renewcommand{\iff}{\quad\text{iff}\quad}
\renewcommand{\phi}{\varphi}
\DeclareMathOperator{\Prop}{Prop}
\newcommand{\llb}{\llbracket}
\newcommand{\rrb}{\rrbracket}
\DeclareMathOperator{\up}{up}
\DeclareMathOperator{\st}{st}
\newcommand{\Ax}{\mathscr{Ax}}
\newcommand{\REl}{\mathsf{El}}
\newcommand{\RAx}{\mathsf{Ax}}
\newcommand{\RMP}{\mathsf{MP}}
\newcommand{\ADualI}{\mathsf{I_{\Diamond\!\Box}}}
\newcommand{\RMonB}{\mathsf{Mon_{\Box}}}
\newcommand{\RMonD}{\mathsf{Mon_{\Diamond}}}
\newcommand{\RMonM}{\mathsf{Mon_{\Mon}}}
\newcommand{\ANega}{\mathsf{neg_a}}
\newcommand{\AInt}{\mathsf{I_{\Diamond}}}
\newcommand{\RStr}{\mathsf{str}}
\newcommand{\RNec}{\mathsf{Nec}}
\newcommand{\AKbox}{\mathsf{K_{\Box}}}
\newcommand{\AKdia}{\mathsf{K_{\Diamond}}}
\newcommand{\ACdia}{\mathsf{C_{\Diamond}}}
\newcommand{\ANdia}{\mathsf{N_{\Diamond}}}
\newcommand{\iM}{\log{iM}}
\newcommand{\IMf}{\log{IM}}
\newcommand{\IMfC}{\log{IM_{Calc}}}
\newcommand{\IK}{\log{IK}}
\newcommand{\M}{\log{M}}
\newcommand{\K}{\log{K}}
\newcommand{\IMC}{\log{GHC}}
\newcommand{\WM}{\log{WM}}
\newcommand{\Lbd}{\lan{L}_{\Box\!\Diamond}}
\newcommand{\pow}{\fun{P}}
\renewcommand{\preceq}{\preccurlyeq}
\newcommand{\nf}{\gamma} 
\newcommand{\goodbox}{\hspace{.2ex}\text{%
  \tikz[baseline=-.6ex, rounded corners=.01ex, line width=.1ex]
    {\draw (-.6ex,-.6ex) rectangle (.6ex,.6ex);}}\kern.2ex}
\newcommand{\gooddiamond}{\hspace{.2ex}\text{%
  \tikz[baseline=-.6ex, rounded corners=.01ex, rotate=45, line width=.1ex]
    {\draw (-.5ex,-.5ex) rectangle (.5ex,.5ex);}}\kern.2ex}
\renewcommand{\Box}{\goodbox}
\renewcommand{\Diamond}{\gooddiamond}
\newcommand{\BoxN}{\Box_N}
\newcommand{\BoxI}{\Box_{\ni}}
\newcommand{\DiamondN}{\Diamond_N}
\newcommand{\DiamondI}{\Diamond_{\ni}}
\DeclareSymbolFont{arrows3}       {LS2}{stixtt}   {m} {n}
\DeclareMathSymbol{\Star}{\mathord}{arrows3}{"A0}
\newcommand{\N}{\mathsf{N}}
\newcommand{\E}{\mathsf{E}}
\renewcommand{\P}{\mathsf{P}}
\newcommand{\n}{\mathsf{n}}
\newcommand{\s}{\mathsf{s}}
\newcommand{\I}{\mathcal{I}}
\newcommand{\cleq}{\mathbin{\subset\kern-7.4pt<}}
\DeclareMathOperator{\dom}{dom}
\DeclareMathOperator{\last}{last}
\DeclareMathOperator{\first}{first}
\DeclareMathOperator{\length}{length}
\newcommand{\bul}{\bullet}
\newcommand{\glue}{\smallfrown}
\newcommand{\FOM}{\mathsf{FOM}}
\newcommand{\IFOM}{\mathsf{IFOM}}
\newcommand{\coh}{\mathrm{coh}}
\newcommand{\Mon}{\triangledown}
\newcommand{\inm}{\mathrm{inm}}
\begin{document}

\maketitle

\begin{abstract}
\noindent
  We introduce a monotone modal analogue of the intuitionistic (normal) modal
  logic $\log{IK}$ using a translation into a suitable (intuitionistic) first-order logic.
  We axiomatise the logic and give a semantics by means of intuitionistic
  neighbourhood models, which contain neighbourhoods whose value can change
  when moving along the intuitionistic accessibility relation.
  We compare the resulting logic with other intuitionistic monotone modal
  logics and show how it can be embedded into a multimodal version of $\log{IK}$.
\end{abstract}


\section{Introduction}

  Ever since Fitch first introduced a modal extension of intuitionistic
  logic~\cite{Fit48},
  many different systems of intuitionistic modal logic have been put forward
  and investigated~\cite{Bul65,Bul66,Fis77,Fis81,BozDos84,Ewa86,PloSti86,Sim94,WolZak97,WolZak98,WolZak99,Dav09}.
  The various approaches often differ on the relation between the box and diamond modality.
  On one side of this spectrum sit for example logics studied by 
  Sotirov~\cite[Section~4]{Sot84} and Wolter and Zakharyaschev~\cite[Section~2]{WolZak99},
  in which $\Box$ and $\Diamond$ are completely unrelated.
  At the other extremity, we have systems
  studied by Bull~\cite{Bul65b} and
  Bo\v{z}i\'{c} and Do\v{s}en~\cite[Section~11]{BozDos84},
  in which the diamond operator can be viewed as an abbreviation for $\neg\Box\neg$.
  
  There are two prominent ways to establish a more subtle interaction between
  the modalities: via a proof system and via a (standard) translation.
  These give rise to \emph{constructive} and \emph{intuitionistic} counterparts of
  classical modal logics.
  Constructive versions of the classical normal modal logic
  $\log{K}$ were introduced by Bellin, De Paiva and
  Ritter~\cite{BelPaiRit01} and by Wijesekera~\cite{Wij90},
  and are obtained by restricting the sequent calculus for $\log{K}$
  to single conclusions, or to zero or one conclusions.
  Besides their proof-theoretic perspectives, both logics are also
  motivated by applications in AI: $\log{CK}$ captures the notion of contex
  in knowledge representation and reasoning~\cite{Pai03,MenPai05}
  and $\log{WK}$ can be used to represent states with partial knowledge~\cite{WijNer05}.

  Intuitionistic $\log{K}$ ($\log{IK}$) was introduced by Fischer Servi~\cite{Fis77,Fis81},
  and further studied by Plotkin and Sterling~\cite{PloSti86}, Ewald~\cite{Ewa86}
  and Simpson~\cite{Sim94}. It can be viewed as the set of formulas whose
  standard translation is derivable in \emph{intuitionistic} first-order
  logic~\cite{Sim94}, or equivalently, whose translation into a suitable
  classical bimodal logic is derivable~\cite{Fis77}.
  The relation between $\log{CK}$, $\log{IK}$ and various logics in between
  is investigated in~\cite{DasMar23,GroShiClo24}.

  Besides intuitionistic analogues of $\log{K}$,
  extensions of intuitionistic logic with monotone modal operators have
  been studied as well. Monotone modal operators are modalities $\Mon$ for
  which $\phi \to \psi$ implies $\Mon\phi \to \Mon\psi$, or equivalently,
  that satisfy $\Mon(\phi \wedge \psi) \to \Mon\phi$.
  The study of these in an intuitionistic context dates back to
  Bull~\cite[Page~5]{Bul65}, who studied a monotone modality that
  satisfies the S4 axioms.
  Discarding the S4 axioms yields Goldblatt's intuitionistic logic
  with a geometric modality~\cite{Gol81,Gol93}, denoted here by $\iM$,
  which was further researched in e.g.~\cite{GroPat20,TabIemJal22,Gro22gt}.
  More recently, extensions of intuitionistic logic with two monotone modal operators were
  investigated by Dalmonte, Grellois and Olivetti~\cite{DalGreOli20,DalGreOli21}.
  Some of these can be viewed as \emph{constructive} monotone modal logics,
  in the sense that they can be obtained by restricting a sequent calculus
  for classical monotone modal logic~\cite{Dal22}.
  Surprisingly, it appears that intuitionistic monotone modal logics arising
  from a translation into a suitable intuitionistic first-order logic have
  not yet been studied. This is the gap we are aiming to fill in this paper.
  
  The first step towards this goal is to decide on a suitable first-order logic.
  Since monotone modal logic is usually interpreted in neighbourhood
  models and we need our first-order structures to mimic these,
  we use a two-sorted first-order logic $\FOM$, with the
  first sort representing worlds and the second representing neighbourhoods.
  These are related using predicates $\N$ and $\E$,
  where $w \N a$ and $a \E w$ are intuitively read as ``$a$ is a neighbourhood of $w$''
  and ``$w$ is an element of $a$,'' respectively.
  This way of viewing neighbourhood structures stems from~\cite{FluZie80},
  and has since been used regularly~\cite{Han03,CatGabSus09,HanKupPac09,Gro22-inl}.
  It can be shown that classical monotone modal logic is the set of formulas
  whose standard translation is derivable in $\FOM$.
  We then obtain an intuitionistic monotone modal logic as the set of formulas
  whose standard translation is derivable in $\IFOM$, the intuitionistic
  first-order logic with the same signature as $\FOM$.
  In fact, to give us more flexibility, we define the logic $\IMf$ as a collection
  of consecutions, i.e.~expressions of the form $\Gamma \vdash \phi$ where $\Gamma$ is a set
  of formulas and $\phi$ is a single formula.
  
  The next objective is to axiomatise $\IMf$.
  It turns out that $\IMf$ is stronger than its constructive counterpart,
  and that it can be obtained by extending the logic $\WM$ from~\cite{Dal22}
  with the interaction axiom
  $(\Box\top \to \Diamond\phi) \to \Diamond\phi$.
  This mirrors the fact that $\IK$ can be obtained by extending $\log{WK}$.
  In order to prove completeness of the axiomatisation, we use a semantic detour.
  We know that $\IFOM$-structures (trivially) provide a sound and complete semantics
  for~$\IMf$. Guided by these, we define \emph{intuitionistic neighbourhood models}
  and we show that these validate precisely the same sequents as the $\IFOM$-structures.
  Completeness then follows from a routine canonical model construction
  where we build an intuitionistic neighbourhood model.
  
  An intuitionistic neighbourhood model consist of a (partially ordered)
  intuitionistic Kripke frame, a valuation, and a set of
  \emph{intuitionistic neighbourhoods}: these are neighbourhoods that can
  change when moving along the intuitionistic accessibility relation.
  This reflects the universal quantification over the ``neighbourhood sort''
  of $\IFOM$, which relies on the shape of the neighbourhood in all accessible worlds.
  When viewing neighbourhoods as representing evidence, intuitionistic
  neighbourhoods embody the idea that a piece evidence may change when
  moving among possible worlds.
  Building the bridge between $\IFOM$-structures and intuitionistic
  neighbourhood models is one of the main technical challenges of the paper.
  
  In an effort to position the logic $\IMf$ into the logical landscape,
  we compare it to various other intuitionistic modal logics.
  First, we consider the embedding of $\iM$ into the diamond-free fragments and box-free
  fragments of $\WM$ and $\IMf$. This gives rise to various (non-)conservativity
  results, again mirroring the relation between constructive and
  intuitionistic $\log{K}$~\cite{DasMar23,GroShiClo24}.
  Second, we show that $\IMf$ can be embedded into a multimodal version of $\IK$,
  in the sense that a sequent is derivable in $\IMf$ if and only if
  its multimodal translation is derivable.
  This can be viewed as an intuitionistic analogue of the embedding of
  classical monotone modal logic into bimodal $\log{K}$,
  which is investigated in~\cite[Section~5.2]{Han03} and in more
  generality in~\cite{GasHer96,KraWol99}.

\paragraph{Structure of the paper}
  In Section~\ref{sec:2} we define the logic $\IMf$
  using the translation into the intuitionistic first-order logic $\IFOM$. 
  In Section~\ref{sec:semantics} we investigate various semantics for $\IMf$,
  and use a canonical model construction to prove completeness.
  Then, in Section~\ref{sec:relation}, we investigate the relation between
  $\iM$ and fragments of $\WM$ and $\IMf$,
  and we show how to embed $\IMf$ into multimodal $\IK$.
  We conclude in Section~\ref{sec:conc}.

\section{From classical to intuitionistic monotone modal logic}\label{sec:2}

  As outlined in the introduction, in order to obtain an intuitionistic counterpart
  of monotone modal logic, we start by taking a first-order perspective on classical
  monotone modal logic and then change the first-order logic to an
  intuitionistic one.
  
  Throughout this paper, we let $\Lbd$ denote the language generated by
  the grammar
  \begin{equation*}
    \phi ::= p_i \mid \bot \mid \phi \wedge \phi \mid \phi \vee \phi \mid \phi \to \phi \mid \Box\phi \mid \Diamond\phi,
  \end{equation*}
  where the $p_i$ range over some set $\Prop$ of proposition letters.
  Note that this can be the language underlying both classical an intuitionistic
  modal logics.

\subsection{Classical monotone modal logic}

  Monotone modal logic $\log{M}$ is the extension of classical propositional
  logic with two dual monotone modalities $\Box$ and
  $\Diamond$~\cite{Che80,Han03,HanKup04}.
  That is, it arises from extending an axiomatisation of classical propositional logic
  with the duality axiom $\Box\phi \leftrightarrow \neg\Diamond\neg\phi$
  and the monotonicity rules
  \begin{equation*}
    \dfrac{\emptyset \vdash \phi \to \psi}
          {\Gamma \vdash \Box\phi \to \Box\psi} \; (\RMonB)
    \qquad\text{and}\qquad
    \dfrac{\emptyset \vdash \phi \to \psi}
          {\Gamma \vdash \Diamond\phi \to \Diamond\psi} \; (\RMonD).
  \end{equation*}
  The logic is usually interpreted in neighbourhood semantics.
  There are various ways to do so. Most importantly, the monotonicity of the
  modalities can either be built into the frames~\cite{Che80,Han03},
  or into their interpretation~\cite{Lew73,AreFig09}.
  We use the latter, as this aligns best with our first-order intentions.

\begin{definition}
  A \emph{neighbourhood frame} is a pair $(W, N)$ consisting of a nonempty
  set $W$ of worlds and a \emph{neighbourhood function} $\nf : W \to \pow\pow(W)$,
  where $\pow(W)$ denotes the powerset of $W$.
  A \emph{neighbourhood model} is a tuple $\mo{M} = (W, \nf, V)$ consisting of a
  neighbourhood frame $(W, \nf)$ and a valuation $V : \Prop \to \pow(W)$
  of the proposition letters.
  The interpretation of $\Lbd$-formulas at a world $w$ in a neighbourhood
  model $\mo{M} = (W, N, V)$ is given recursively by
  \begin{align*}
    \mo{M}, w \Vdash p
      &\iff w \in V(p) \\
    \mo{M}, w \Vdash \bot
      &\phantom{\iff}\text{never} \\
    \mo{M}, w \Vdash \phi \wedge \psi
      &\iff \mo{M}, w \Vdash \phi \text{ and } \mo{M}, w \Vdash \psi \\
    \mo{M}, w \Vdash \phi \vee \psi
      &\iff \mo{M}, w \Vdash \phi \text{ or } \mo{M}, w \Vdash \psi \\
    \mo{M}, w \Vdash \phi \to \psi
      &\iff \mo{M}, w \not\Vdash \phi \text{ or } \mo{M}, w \Vdash \psi \\
    \mo{M}, w \Vdash \Box\phi
      &\iff \text{there exists } a \in \nf(w)
            \text{ such that for all } v \in a
            \text{ we have } \mo{M}, v \Vdash \phi \\
    \mo{M}, w \Vdash \Diamond\phi
      &\iff \text{for all } a \in \nf(w)
            \text{ there exists } v \in a
            \text{ such that } \mo{M}, v \Vdash \phi
  \end{align*}
\end{definition}

  We use a two-sorted first-order correspondence language, with one sort representing
  the worlds and the other the neighbourhoods~\cite{FluZie80,HanKupPac09,Gro22-inl}.

\begin{definition}
  Let $\FOM$ be the two-sorted first-order logic with sorts $\s$ and $\n$,
  a binary predicate $\N$ between $\s$ and $\n$,
  a binary predicate $\E$ between $\n$ and $\s$,
  and a unary predicate $\P_i$ of type $\s$ for each $p_i \in \Prop$.
\end{definition}

  Then a $\FOM$-structure is a tuple $\fomo{M} = (\I(\s), \I(\n), \I(\N), \I(\E), \I(\P_i))$
  consisting of interpretations $\I(\s)$ and $\I(\n)$ of the sorts as nonempty sets,
  and interpretations
  $\I(\N) \subseteq \I(\s) \times \I(\n)$ and
  $\I(\E) \subseteq \I(\n) \times \I(\s)$
  and $\I(\P_i) \subseteq \I(\s)$ of the predicates.
  Every neighbourhood model $\mo{M} = (W, \nf, V)$ gives rise
  to a $\FOM$-structure $\mo{M}^{\circ}$ with interpretations:
  \begin{align*}
    \I(\s) &= W
      &(w, a) \in \I(\N) &\iff a \in \nf(w) \\
    \I(\n) &= \bigcup\{ \nf(w) \mid w \in W \}
      &(a, w) \in \I(\E) &\iff w \in a \\
      &
      &w \in \I(\P_i) &\iff w \in V(p_i)
  \end{align*}
  In the converse direction, a $\FOM$-structure 
  $\fomo{M} = (\I(\s), \I(\n), \I(\N), \I(\E), \I(\P_i))$
  yields a neighbourhood model $\mo{M}^{\bullet} = (\I(\s), \gamma, V)$,
  where $V(p_i) = \I(\P_i)$ and 
  \begin{equation*}
    \gamma(w) = \big\{ \{ y \in \I(\s) \mid a \E y \} \mid w \N a \big\}.
  \end{equation*}

\begin{definition}\label{def:st}
  The standard translation $\st_x : \Lbd \to \FOM$ is defined recursively by
  \begin{align*}\label{eq:st-mod}
    \st_x(p_i) &= P_ix \\
    \st_x(\bot) &= (x \neq x)
      &\st_x(\phi \to \psi) &= \st_x(\phi) \to \st_x(\psi) \\
    \st_x(\phi \wedge \psi) &= \st_x(\phi) \wedge \st_x(\psi)
      &\st_x(\Box\phi) &= (\exists a) (x \N a \wedge (\forall y) (a \E y \to \st_y(\phi))) \\
    \st_x(\phi \vee \psi) &= \st_x(\phi) \vee \st_x(\psi)
      &\st_x(\Diamond\phi) &= (\forall a)(x \N a \to (\exists y)( a \E y \wedge \st_y(\phi)))
  \end{align*}
\end{definition}
  
  Then we have:
  
\begin{proposition}
  For all $\phi \in \Lbd$, neighbourhood models $\mo{M}$ and
  $\FOM$-structures $\fomo{N}$:
  \begin{equation*}
    \mo{M}, w \Vdash \phi \iff \mo{M}^{\circ} \models \st_x(\phi)[w],
    \quad\quad
    \fomo{N}^{\bullet}, w \Vdash \phi \iff \fomo{N} \models \st_x(\phi)[w].
  \end{equation*}
\end{proposition}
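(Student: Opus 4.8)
The plan is to prove both biconditionals simultaneously by induction on the structure of $\phi \in \Lbd$. Fixing $\mo{M}$ and $\fomo{N}$, I would take as the induction hypothesis that
\[
\mo{M}, w \Vdash \phi \iff \mo{M}^{\circ} \models \st_x(\phi)[w]
\qquad\text{and}\qquad
\fomo{N}^{\bullet}, w \Vdash \phi \iff \fomo{N} \models \st_x(\phi)[w]
\]
hold at \emph{every} world $w$ for all proper subformulas of $\phi$, and then verify them for $\phi$ itself. The atomic cases are immediate: $w \in V(p_i)$ iff $w \in \I(\P_i)$ iff $\fomo{N} \models P_i x[w]$, and both $\bot$ and $x \neq x$ are everywhere false. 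Since $\st_x$ commutes with $\wedge$, $\vee$ and $\to$, and the first-order clauses for these connectives mirror the modal clauses verbatim, the propositional cases follow directly from the induction hypothesis with no further argument.

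The modal cases are the only substantive ones. For the $\mo{M}^{\circ}$ direction I would unfold $\st_x(\Box\phi) = (\exists a)(x \N a \wedge (\forall y)(a \E y \to \st_y(\phi)))$ against the interpretation in $\mo{M}^{\circ}$. Using that $(w,a) \in \I(\N)$ iff $a \in \gamma(w)$ and that $(a,y) \in \I(\E)$ iff $y \in a$, the first-order statement becomes: there is $a \in \gamma(w)$ with $\mo{M}^{\circ} \models \st_y(\phi)[y]$ for all $y \in a$. Applying the induction hypothesis to $\phi$ rewrites the inner clause as $\mo{M}, y \Vdash \phi$, which is exactly the satisfaction clause for $\Box\phi$; the $\Diamond$ case is dual. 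The one point to record here is that the existential over the sort $\n$, whose interpretation is $\bigcup\{\gamma(w) \mid w \in W\}$, collapses under the guard $x \N a$ to a quantifier ranging over precisely the neighbourhoods in $\gamma(w)$.

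The $\fomo{N}^{\bullet}$ direction is where I expect the (minor) obstacle to lie. Here the neighbourhoods of $\fomo{N}^{\bullet}$ are the sets $\hat a = \{y \in \I(\s) \mid a \E y\}$ with $w \N a$, so that $\gamma(w) = \{\hat a \mid w \N a\}$. Unfolding $\fomo{N} \models \st_x(\Box\phi)[w]$ and applying the induction hypothesis yields: there is $a$ with $w \N a$ such that $\fomo{N}^{\bullet}, y \Vdash \phi$ for all $y \in \hat a$. The subtlety is that $a \mapsto \hat a$ need not be injective, so distinct first-order neighbourhoods may collapse to a single element of $\gamma(w)$. This causes no real difficulty, because the quantified condition ``$\fomo{N}^{\bullet}, y \Vdash \phi$ for all $y \in \hat a$'' depends on $a$ only through $\hat a$; hence quantifying over $\{a \mid w \N a\}$ and over its image $\gamma(w)$ give the same truth value, and this works uniformly for the existential ($\Box$) and the universal ($\Diamond$) clause. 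Verifying that this collapse is truth-preserving is the crux of the argument, and once it is in place the induction is complete.
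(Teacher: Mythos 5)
Your proof is correct. The paper itself gives no proof of this proposition --- it is stated as immediate from the constructions (``Then we have:'') --- and your structural induction, including the two subtleties you isolate (the collapse of the guarded quantifier over sort $\n$ to a quantifier over $\gamma(w)$, and the harmless non-injectivity of the map $a \mapsto \hat{a}$ in the $\fomo{N}^{\bullet}$ direction), is precisely the routine argument the paper is implicitly relying on.
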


  Therefore, we can view the classical monotone modal logic $\log{M}$
  as the set of $\Lbd$-formulas whose standard translation
  is derivable in $\FOM$.

\subsection{Intuitionistic first-order logic}

  Let $\IFOM$ be the intuitionistic first-order logic of the same signature
  as $\FOM$.
  Then an $\IFOM$-structure is a tuple $\fomo{M} = (W, \leq, \I)$ consisting of
  a partially ordered set $(W, \leq)$ together
  with an interpretation function $\I$ that assigns to each world $w \in W$
  sets $\I(\s, w)$ and $\I(\n, w)$, and relations
  $\I(\N, w) \subseteq \I(\s, w) \times \I(\n, w)$ and
  $\I(\E, w) \subseteq \I(\n, w) \times \I(\s, w)$ and
  $\I(\P_i, w) \subseteq \I(\s, w)$,
  such that $w \leq w'$ implies $\I(\mathsf{X}, w) \subseteq \I(\mathsf{X}, w')$
  for all $\mathsf{X} \in \{ \s, \n, \N, \E, \P_i \}$.
  In other words, for each $w \in W$ we get a (classical) $\FOM$-structure
  $(\I(\s, w), \I(\n, w), \I(\E, w), \I(\N, w), \I(\P_i, w))$, and these
  increase with respect to inclusion as we go up along the intuitionistic
  accessibility relation.
  
  If $\alpha(x)$ is a first-order formula with free variable $x$,
  and $d$ is a domain element of some first-order structure $\fomo{M}$,
  then we denote by $\alpha(x)[d]$ the formula obtained from interpreting
  $x$ as $d$.

\begin{example}
  The following diagram depicts an $\IFOM$-structure:
    \begin{center}
    \begin{tikzpicture}[scale=.85,yscale=.85]
        \draw[rounded corners=2mm,fill=blue!10] (1.6,.7) rectangle (4.4,.1)
              (4.6,.4) node[right]{\footnotesize{$\I(\s, w_1)$}};
        \draw[rounded corners=2mm,fill=red!10] (2,-.1) rectangle (4,-.7)
              (4.2,-.4) node[right]{\footnotesize{$\I(\n, w_1)$}};
        \draw[rounded corners=2mm,fill=blue!10] (1.1,2.7) rectangle (5.9,2.1)
              (6.1,2.4) node[right]{\footnotesize{$\I(\s, w_2)$}};
        \draw[rounded corners=2mm,fill=red!10] (1.5,1.9) rectangle (5.5,1.3)
              (5.7,1.6) node[right]{\footnotesize{$\I(\n, w_2)$}};
        \draw[rounded corners=2mm,fill=blue!10] (.6,4.7) rectangle (7.4,4.1)
              (7.6,4.4) node[right]{\footnotesize{$\I(\s, w_3)$}};
        \draw[rounded corners=2mm,fill=red!10] (1,3.9) rectangle (7,3.3)
              (7.2,3.6) node[right]{\footnotesize{$\I(\n, w_3)$}};
        \node (w1) at (0,0) {$w_1$};
          \node (x11) at (2,.4) {$d_1$};
          \node (x12) at (4,.4) {$d_2$};
          \node (a11) at (3,-.4) {$a_1$};
        \node (w2) at (-.5,2) {$w_2$};
          \node (x21) at (1.5,2.4) {$d_1$};
          \node (x22) at (3.5,2.4) {$d_2$};
          \node (x23) at (5.5,2.4) {$d_3$};
          \node (a21) at (2.5,1.6) {$a_1$};
        \node (w3) at (-1,4) {$w_3$};
          \node (x31) at (1,4.4) {$d_1$};
          \node (x32) at (3,4.4) {$d_2$};
          \node (x33) at (5,4.4) {$d_3$};
          \node (x34) at (7,4.4) {$d_4$};
          \node (a31) at (2,3.6) {$a_1$};
          \node (a32) at (5,3.6) {$a_2$};
        \draw[-latex] (w1) to node[left]{\footnotesize{$\leq$}} (w2);
        \draw[-latex] (w2) to node[left]{\footnotesize{$\leq$}} (w3);
        \draw[-latex, bend right=25] (x11) to (a11);
        \draw[-latex, bend right=20] (a11) to (x12);
        \draw[-latex, bend right=25] (x21) to (a21);
        \draw[-latex, bend right=25] (x22) to (a21);
        \draw[-latex, bend right=20] (a21) to (x22);
        \draw[-latex, bend right=15] (a21) to (x23);
        \draw[-latex, bend right=25] (x31) to (a31);
        \draw[-latex, bend right=25] (x32) to (a31);
        \draw[-latex, bend right=20] (a31) to (x32);
        \draw[-latex, bend right=15] (a31) to (x33);
        \draw[-latex, bend right=10] (x32) to (a32);
        \draw[-latex, bend right=15] (a32) to (x34);
    \end{tikzpicture}
  \end{center}
  Suppose $\I(\P_i, w_2) = \I(\P_i, w_2) = \I(\P_i, w_3) = \{ d_2 \}$.
  Then $w_1 \models (\forall a)(d_1 \N a \to (\exists y)(a \E y \wedge \P_iy))$:
  the intuitionistic interpretation of the universal quantifier requires
  us to verify for each successor $v$ of $w_1$ that $x \N a$ implies the existence
  of some $y$ such that $a \E y$ and $\P_i y$.
  And this is indeed the case, because at each ``level'' $w_1, w_2, w_3$, 
  the state $d_1$ is only connected to neighbourhood $a_1$, and in each case
  $d_2$ is a suitable $y$.
  Since the given formula is the standard translation of
  $\Diamond p_i$ with $x$ interpreted as $d_1$, we find that
  $w_1 \models \st_x(\Diamond p_i)[d_1]$.
\end{example}

  The standard translation $\st_x$ from Definition~\ref{def:st} can be viewed
  as a translation from $\Lbd$ into $\IFOM$.
  This allows us to define the theorems of our intuitionistic monotone modal
  logic as the set $\{ \phi \in \Lbd \mid \IFOM \models \st_x(\phi) \}$.
  We extend this to $\Lbd$-consecutions, i.e.~expressions of the form $\Gamma \vdash \phi$
  such that $\Gamma \cup \{ \phi \} \subseteq \Lbd$, as follows.

\begin{definition}\label{def:imf}
  The logic $\IMf$ consists of all $\Lbd$-consecutions $\Gamma \vdash \phi$ such that
  for every $\IFOM$-structure $\fomo{M}$, every world $w$ in $\fomo{M}$
  and every $d \in \I(\s, w)$,
  $\fomo{M}, w \models \st_x(\Gamma)[d]$ implies $\fomo{M}, w \models \st_x(\phi)[d]$.
  We write $\Gamma \vdash_{\IMf} \phi$ if a consecution $\Gamma \vdash \phi$
  is in $\IMf$.
\end{definition}

  Intuitively, this states that $\Gamma \vdash_{\IMf} \phi$
  if $\st_x(\Gamma) \models \st_x(\phi)$ in all
  pointed $\IFOM$-structures, where the point indicates the interpretation of $x$.
  We can use the first-order structures for $\IFOM$ as models
  for $\IMf$. This is in analogy with IL-models for the intuitionistic
  modal logic $\IK$~\cite[Section~5.2]{Sim94}.

\begin{definition}\label{def:IM-structure}
  Let
  $\fomo{M} = (W, \leq, \I)$
  be an $\IFOM$-structure.
  Formulas from $\Lbd$ can be interpreted in pairs of the form $(w, x)$,
  where $w \in W$ and $x \in \I(\s, w)$, as follows:
  \begin{align*}
    \fomo{M}, w, x \models p_i &\iff x \in \I(\P_i, w) \\
    \fomo{M}, w, x \models \bot &\phantom{\iff}\text{never} \\
    \fomo{M}, w, x \models \phi \wedge \psi
      &\iff \fomo{M}, w, x \models \phi \text{ and } \fomo{M}, w, x \models \psi \\
    \fomo{M}, w, x \models \phi \vee \psi
      &\iff \fomo{M}, w, x \models \phi \text{ or } \fomo{M}, w, x \models \psi \\
    \fomo{M}, w, x \models \phi \to \psi
      &\iff \text{for all } w' \geq w, \;\; \fomo{M}, w', x \models \phi
            \text{ implies } \fomo{M}, w', x \models \psi \\
    \fomo{M}, w, x \models \Box\phi
      &\iff \text{there exists } a \in \I(\n, w) \text{ such that } (x, a) \in \I(\N, w)
            \text{ and such that} \\
      &\phantom{\iff} \text{for all } w' \geq w
           \text{ and all } x' \in \I(\s, w'), \; (a, x') \in \I(\E, w')
           \text{ implies } \fomo{M}, w', x' \models \phi \\
    \fomo{M}, w, x \models \Diamond\phi
      &\iff \text{for all } w' \geq w \text{ and all } a' \in \I(\n, w'),
            \text{ if } (x, a') \in \I(\N, w') \\
      &\phantom{\iff} 
            \text{ then there exists } y' \in \I(\s, w')
            \text{ s.t. } (a', y') \in \I(\E, w')
            \text{ and } \fomo{M}, w', y' \models \phi
  \end{align*}
  For $\Gamma \subseteq \Lbd$, we write $\fomo{M}, w, x \Vdash \Gamma$
  if $\fomo{M}, w, x \Vdash \psi$ for all $\psi \in \Gamma$.
  Furthermore, we write $\Gamma \models \phi$ if $\fomo{M}, w, x \Vdash \Gamma$
  implies $\fomo{M}, w, x \Vdash \phi$, for every $\IFOM$-structure
  $\fomo{M} = (W, \leq, \I)$ and
  every $w \in W$ and $x \in \I(\s, w)$.
\end{definition}

  Then by construction:

\begin{theorem}\label{thm:sc-triv}
  For all $\Gamma \cup \{ \phi \} \subseteq \Lbd$,
  we have $\Gamma \vdash_{\IMf} \phi$ if and only if $\Gamma \models \phi$.
\end{theorem}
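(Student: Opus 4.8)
The plan is to reduce the theorem to a single truth lemma connecting the intuitionistic first-order satisfaction of a standard translation with the modal satisfaction relation of Definition~\ref{def:IM-structure}. Precisely, I would prove by induction on $\psi \in \Lbd$ that for every $\IFOM$-structure $\fomo{M} = (W, \leq, \I)$, every $w \in W$ and every $d \in \I(\s, w)$,
\[
  \fomo{M}, w \models \st_x(\psi)[d] \iff \fomo{M}, w, d \models \psi,
\]
where the left-hand side is the ordinary intuitionistic Kripke--Tarski satisfaction of the first-order formula $\st_x(\psi)$ with its free variable interpreted as $d$ (the variable name being immaterial). Once this is established, the theorem is immediate: both Definition~\ref{def:imf} and Definition~\ref{def:IM-structure} range over exactly the same triples $(\fomo{M}, w, d)$, so applying the lemma to every formula in $\Gamma \cup \{\phi\}$ turns the implication ``$\fomo{M}, w \models \st_x(\Gamma)[d]$ implies $\fomo{M}, w \models \st_x(\phi)[d]$'' into ``$\fomo{M}, w, d \Vdash \Gamma$ implies $\fomo{M}, w, d \models \phi$,'' for every such triple. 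Hence $\Gamma \vdash_{\IMf} \phi$ if and only if $\Gamma \models \phi$.

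The base and propositional cases of the induction are routine. The atomic case $\psi = p_i$ holds by the definition of $\I(\P_i, w)$; the case $\psi = \bot$ holds because its translation $x \neq x$ is never satisfied, by reflexivity of equality; the cases $\psi = \chi \wedge \theta$ and $\psi = \chi \vee \theta$ are evaluated locally at $w$ and follow directly from the induction hypothesis; and the case $\psi = \chi \to \theta$ coincides because the first-order clause for $\to$ and the clause for $\to$ in Definition~\ref{def:IM-structure} both quantify over all $w' \geq w$ in the same manner.

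The substantive cases are $\Box$ and $\Diamond$, and I expect the verification here to be the main (though mild) obstacle. Unfolding the intuitionistic first-order semantics of $\st_x(\Box\psi) = (\exists a)(x \N a \wedge (\forall y)(a \E y \to \st_y(\psi)))$ at $(w, d)$ yields: there is $a \in \I(\n, w)$ with $(d, a) \in \I(\N, w)$ such that for all $w' \geq w$, all $y' \in \I(\s, w')$ and all $w'' \geq w'$, if $(a, y') \in \I(\E, w'')$ then $\fomo{M}, w'' \models \st_y(\psi)[y']$; applying the induction hypothesis at $w''$ (legitimate since $y' \in \I(\s, w'')$) rewrites the consequent as $\fomo{M}, w'', y' \models \psi$. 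The point is that this nested quantification over the tower $w \leq w' \leq w''$ must be reconciled with the single future world appearing in the $\Box$-clause of Definition~\ref{def:IM-structure}, and this collapse is driven entirely by the monotonicity requirement $w \leq w' \Rightarrow \I(\mathsf{X}, w) \subseteq \I(\mathsf{X}, w')$ on $\IFOM$-structures: from the nested form one recovers the single-world form by specialising $w'' = w'$, and conversely one applies the single-world clause at $w''$ using $y' \in \I(\s, w') \subseteq \I(\s, w'')$. A symmetric argument, this time collapsing the tower arising from the $\forall$/$\to$/$\exists$ prefix of $\st_x(\Diamond\psi) = (\forall a)(x \N a \to (\exists y)(a \E y \wedge \st_y(\psi)))$, handles the diamond, again leaning on monotonicity (now of $\I(\n, \cdot)$) to match the single-world $\Diamond$-clause. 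This completes the induction and hence the theorem.
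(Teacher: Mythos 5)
Your proposal is correct and is essentially the paper's own (implicit) argument: the paper states this theorem with only the remark ``by construction,'' because the satisfaction clauses of Definition~\ref{def:IM-structure} are designed to be the unfoldings of the standard translation under the intuitionistic first-order semantics. Your truth lemma --- in particular the collapse of the nested quantifier towers $w \leq w' \leq w''$ in the $\Box$ and $\Diamond$ cases via the monotonicity condition on $\IFOM$-structures --- is precisely the content that the paper leaves unspoken, and your verification of it is accurate.
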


\subsection{Calculi for intuitionistic monotone modal logics}

  We define various intuitionistic monotone modal logics
  via generalised Hilbert calculi
  (see e.g.~\cite[Section~2.4]{TroSch00} or~\cite[Section~4.1]{Shi23}).
  This allows us to define a consequence relation (i.e.~a set consecutions
  $\Gamma \vdash \phi$)
  using axioms and rules, while at the same time being careful about
  the shape of $\Gamma$ allowed in the rules, avoiding confusions such as
  those pointed out in~\cite{GorShi20}.
  (For example, the premises of the modus ponens rule allow for any $\Gamma$,
  while those of the monotonicity rules require $\Gamma = \emptyset$.)

\begin{definition}\label{def:ghc}
  Let $\operatorname{Ax} \subseteq \Lbd$ be a set of axioms.
  Let $\Ax$ be the set of substitution instances of axioms in
  $\operatorname{Ax}$ together with all substitution instances of an
  axiomatisation of intuitionistic logic.
  Define $\IMC(\operatorname{Ax})$
  as the collection of consecutions derivable from the following rules:
  the \emph{element rule} and \emph{axiom rule}
  $$
    \dfrac{}{\Gamma \vdash \phi} \; (\REl),
    \qquad
    \dfrac{}{\Gamma \vdash \psi} \; (\RAx),
  $$
  where $\phi \in \Gamma$ and $\psi \in \Ax$,
  and \emph{modus ponens} and the \emph{monotonicity rules}
  $$
    \dfrac{\Gamma \vdash \phi \quad \Gamma \vdash \phi \to \psi}
          {\Gamma \vdash \psi} \; (\RMP),
    \qquad
    \dfrac{\emptyset \vdash \phi \to \psi}
          {\Gamma \vdash \Box\phi \to \Box\psi} \; (\RMonB)
    \qquad\text{and}\qquad
    \dfrac{\emptyset \vdash \phi \to \psi}
          {\Gamma \vdash \Diamond\phi \to \Diamond\psi} \; (\RMonD).
  $$
  We write $\Gamma \vdash_{\IMC(\operatorname{Ax})} \phi$
  if $\Gamma \vdash \phi$ can be derived in $\IMC(\operatorname{Ax})$.
\end{definition}

  It can be shown that any logic of the form $\IMC(\operatorname{Ax})$
  satisfies the deduction theorem.

\begin{theorem}
  Let $\Gamma \cup \{ \phi, \psi \} \subseteq \Lbd$.
  Then $\Gamma, \phi \vdash_{\IMC(\operatorname{Ax})} \psi$
  if and only if $\Gamma \vdash_{\IMC(\operatorname{Ax})} \phi \to \psi$.
\end{theorem}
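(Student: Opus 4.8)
The plan is to prove the deduction theorem for the generalised Hilbert calculus $\IMC(\operatorname{Ax})$ by induction on the structure of a derivation. The right-to-left direction is the routine one: assuming $\Gamma \vdash_{\IMC(\operatorname{Ax})} \phi \to \psi$, I would weaken the antecedent to $\Gamma, \phi$ (which is sound because the element rule gives $\Gamma, \phi \vdash \chi$ for any $\chi$ in the enlarged antecedent, and more generally any derivation from $\Gamma$ lifts to one from $\Gamma, \phi$), then apply the element rule to obtain $\Gamma, \phi \vdash \phi$, and finally discharge via modus ponens to conclude $\Gamma, \phi \vdash_{\IMC(\operatorname{Ax})} \psi$.

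The substantive direction is left-to-right. Assuming a derivation of $\Gamma, \phi \vdash \psi$, I would show $\Gamma \vdash \phi \to \psi$ by induction on the length of the derivation, transforming each line $\Gamma, \phi \vdash \chi$ into a derivation of $\Gamma \vdash \phi \to \chi$. The base cases are the axiom and element rules. For the axiom rule, $\chi \in \Ax$, and since $\Ax$ contains an axiomatisation of intuitionistic logic we have $\chi \to (\phi \to \chi)$ available, so modus ponens yields $\Gamma \vdash \phi \to \chi$. For the element rule, either $\chi \in \Gamma$ (handled as in the axiom case) or $\chi = \phi$, in which case $\Gamma \vdash \phi \to \phi$ follows from the intuitionistic theorem $\phi \to \phi$ via the axiom rule. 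The inductive case for modus ponens uses the intuitionistically valid axiom $(\phi \to (\alpha \to \beta)) \to ((\phi \to \alpha) \to (\phi \to \beta))$ together with two applications of modus ponens.

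The crucial point — and the reason the careful bookkeeping of $\Gamma$ in Definition~\ref{def:ghc} matters — is the treatment of the two monotonicity rules. Here the subtlety flagged in~\cite{GorShi20} becomes relevant. If a line $\Gamma, \phi \vdash \Box\alpha \to \Box\beta$ is obtained by $(\RMonB)$, then by the shape of the rule its premise must be $\emptyset \vdash \alpha \to \beta$, whose antecedent is empty and in particular does not contain $\phi$. Thus the premise is itself derivable outright, and I can simply reapply $(\RMonB)$ to obtain $\Gamma \vdash \Box\alpha \to \Box\beta$, and then derive $\Gamma \vdash \phi \to (\Box\alpha \to \Box\beta)$ using the intuitionistic axiom $\chi \to (\phi \to \chi)$ with modus ponens. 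The same argument handles $(\RMonD)$. I expect this to be the main obstacle only in the sense that it is where a naive induction could break: the deduction theorem would fail if the monotonicity rules permitted an arbitrary nonempty $\Gamma$ in their premises, since then the premise might genuinely depend on $\phi$ and could not be detached. Because the calculus restricts those premises to $\Gamma = \emptyset$, the extra hypothesis $\phi$ never enters a modal rule, and the induction goes through cleanly.
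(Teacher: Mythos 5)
Your proof is correct. The paper itself states this theorem without proof (only remarking that ``it can be shown''), and your argument --- induction on derivations, handling modus ponens with the intuitionistic axioms $\chi \to (\phi \to \chi)$ and $(\phi \to (\alpha \to \beta)) \to ((\phi \to \alpha) \to (\phi \to \beta))$, and crucially observing that the premises of $(\RMonB)$ and $(\RMonD)$ have empty antecedent and so never depend on the discharged hypothesis $\phi$ --- is exactly the standard argument that the restriction to $\Gamma = \emptyset$ in those rules (the point of Definition~\ref{def:ghc} and the reference to~\cite{GorShi20}) is designed to enable.
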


  Taking $\Ax = \emptyset$ yields a monotone modal extension of intuitionistic
  logic where the modalities are completely unrelated. This can be viewed as
  a bimodal version of $\iM$, and as a monotone counterpart of
  the intuitionistic normal modal logics
  $\log{IK_0(\Box,\Diamond)}$~\cite[Section~4]{Sot84} and
  $\log{IntK_{\Box\!\Diamond}}$~\cite[Section~2]{WolZak99}.
  Using $\Ax = \{ (\Box p \wedge \Diamond\neg p) \to \bot \}$,
  we can define constructive monotone modal logic
  $\WM$~\cite{DalGreOli20,Dal22},
  the monotone counterpart of Wijesekera's constructive modal logic
  $\log{WK}$~\cite{Wij90,WijNer05}.

\begin{definition}\label{def:WM}
  $\WM := \IMC( \{ (\Box p \wedge \Diamond\neg p) \to \bot \})$.
\end{definition}

  Lastly, we define the calculus $\IMfC$.
  We will prove in Theorem~\ref{thm:IM-IMC} that this axiomatises
  the logic $\IMf$ from Definition~\ref{def:imf},
  thus axiomatising the intuitionistic monotone modal logic arising
  from the translation into the intuitionistic first-order logic $\IFOM$.

\begin{definition}
  The logic $\IMfC$ is defined as $\IMC(\operatorname{Ax})$, where $\operatorname{Ax}$
  consists of
  \begin{equation*}
    \text{($\ANega$)} \; (\Box p \wedge \Diamond\neg p) \to \bot
    \quad\text{and}\quad
    \text{($\AInt$)} \; (\Box \top \to \Diamond p) \to \Diamond p.
  \end{equation*}
\end{definition}

\section{Intuitionistic neighbourhood semantics}\label{sec:semantics}

  The interpretation of $\Box$ from Definition~\ref{def:IM-structure} relies
  on what worlds $v$ are in some neighbourhood $a$ (that is, on
  $a \E v$), quantified over intuitionistic successors of $w$.
  In particular, this elementhood can change when moving along the intuitionistic
  successors of $w$.
  In other words, we may have $w \leq w'$ and $v \in \I(\E, w')$ while $v \notin \I(\E, w)$.
  This suggests that when moving from a classical to an intuitionistic setting,
  we need to adapt our notion of a neighbourhood. Rather than a subset of 
  worlds, a neighbourhood can change when moving
  along the intuitionistic accessibility relation.
  This gives rise to the notion of an \emph{intuitionistic neighbourhood}.
  An intuitionistic neighbourhood model then becomes an intuitionistic Kripke
  frame together with a set of intuitionistic neighbourhoods and a valuation.
  
  Neighbourhoods of a world can be viewed as sets of evidence.
  Intuitionistic neighbourhoods then expresses the idea that evidence can
  change when moving along the intuitionistic accessibility relation.
  Necessity of $\phi$, i.e.~truth of $\Box\phi$, at a world $w$ then follows
  from finding evidence for $\phi$ at all successors of $w$. 
  Similarly, possibility of $\phi$ has to be shown at all successors.

  In this section we prove that such intuitionistic neighbourhood models
  provide another sound and complete semantics for $\IMf$
  by constructing truth-preserving translations between intuitionistic neighbourhood
  models and $\IFOM$-structures.
  While translating $\IFOM$-structures to intuitionistic neighbourhood models is
  straightforward (Section~\ref{subsec:inf}),
  the converse direction proceeds in three steps:
  given an intuitionistic neighbourhood model, we first turn it into
  a \emph{coherent} model (Section~\ref{subsec:coherent}),
  then into a \emph{Cartesian} one (Section~\ref{subsec:unravelling}),
  and we show that the coherent and Cartesian models are those
  isomorphic to intuitionistic neighbourhood models arising from $\IFOM$-structures(Section~\ref{subsec:cartesian}).
  
  Finally, in Section~\ref{subsec:IM-complete} we use a canonical model construction
  to prove that $\IMfC$ is complete with respect to the class of intuitionistic
  neighbourhood models, which entials that the generalised Hilbert calculus
  defining $\IMfC$ axiomatises $\IMf$.

\subsection{Intuitionistic neighbourhood frames}\label{subsec:inf}

  If $(W, \leq)$ is an intuitionistic Kripke frame and $a \subseteq W$,
  then we define ${\uparrow}a = \{ v \in W \mid w \leq v \text{ for some } w \in a \}$.
  An \emph{upset} is a subset $a \subseteq W$ such that $a = {\uparrow}a$,
  and we denote the set of upsets of $(W, \leq)$ by $\up(W, \leq)$.
  In its simplest form, we can define an intuitionistic neighbourhood as follows.

\begin{definition}\label{def:in}
  Let $(W, \leq)$ be an intuitionistic Kripke frame.
  An \emph{intuitionistic neighbourhood} is a partial function
  $a : W \rightharpoonup \fun{P}W$ such that its domain
  $\dom(a) := \{ w \in W \mid a(w) \text{ is defined} \}$ is
  an upset of $(W, \leq)$.
\end{definition}

  Intuitionistic neighbourhood frames and models then arise from equipping
  an intuitionistic Kripke frame or model with a set of intuitionistic neighbourhoods.

\begin{definition}\label{def:inm}
  An \emph{intuitionistic neighbourhood frame} is a tuple
  $(W, \leq, N)$ consisting of an intuitionistic Kripke frame $(W, \leq)$
  and a collection $N$ of intuitionistic neighbourhoods.
  We say that $a \in N$ is a neighbourhood of $w \in W$ if
  $w \in \dom(a)$, and we write $N_w$ for the collection of neighbourhoods
  of $w$.
  
  An \emph{intuitionistic neighbourhood model} is a tuple
  $\mo{M} = (W, \leq, N, V)$ where $(W, \leq, N)$ is an intuitionistic
  neighbourhood frame and $V : \Prop \to \up(W, \leq)$ is a valuation
  of the proposition letters.
  The interpretation of $\Lbd$-formulas in a world $w$ of $\mo{M}$
  is defined recursively via:
  \begin{align*}
    \mo{M}, w \Vdash \bot &\phantom{\iff} \text{never} \\
    \mo{M}, w \Vdash p &\iff w \in V(p) \\
    \mo{M}, w \Vdash \phi \wedge \psi
      &\iff \mo{M}, w \Vdash \phi \text{ and } \mo{M}, w \Vdash \psi \\
    \mo{M}, w \Vdash \phi \vee \psi
      &\iff \mo{M}, w \Vdash \phi \text{ or } \mo{M}, w \Vdash \psi \\
    \mo{M}, w \Vdash \phi \to \psi
      &\iff \text{for all } v \geq w, \;
            \mo{M}, v \Vdash \phi \text{ implies } \mo{M}, v \Vdash \psi \\
    \mo{M}, w \Vdash \Box\phi
      &\iff \text{there exists } a \in N_w
            \text{ such that for all } w' \geq w, \;\;
            v \in a(w') \text{ implies } \mo{M}, v \Vdash \phi \\
    \mo{M}, w \Vdash \Diamond\phi
      &\iff \text{for all } w' \geq w
            \text{ and all } a \in N_{w'}
            \text{ there exists } v \in a(w')
            \text{ such that } \mo{M}, v \Vdash \phi
  \end{align*}
  If $\Gamma \subseteq \Lbd$ is a set of formulas, then we write
  $\mo{M}, w \Vdash \Gamma$ if $\mo{M}, w \Vdash \psi$ for all $\psi \in \Gamma$.
  We denote the \emph{truth set} of $\phi$ in $\mo{M}$ by
  $\llb \phi \rrb^{\mo{M}} := \{ w \in W \mid \mo{M}, w \Vdash \phi \}$.
  A model $\mo{M}$ \emph{validates} a consecution $\Gamma \vdash \phi$
  if any world that satisfies all formulas in $\Gamma$ also satisfies $\phi$,
  and a frame $\mo{F} = (W, \leq, N)$ validates $\Gamma \vdash \phi$
  if every model of the form $\mo{M} = (W, \leq, N, V)$ validates it.
  Lastly, we say that $\phi$ is a \emph{semantic consequence} of $\Gamma$
  over the class of intuitionistic neighbourhood models if every intuitionistic
  neighbourhood model validates $\Gamma \vdash \phi$, and we denote this
  by $\Gamma \Vdash^{\inm} \phi$.
\end{definition}

  The next lemma extends the usual persistence property for intuitionistic
  logic (see e.g.~\cite[Proposition~2.1]{ChaZak97}) to the modal setting.
  It can be proven by a routine induction on the structure of $\phi$.
  
\begin{lemma}
  Let $\mo{M} = (W, \leq, N, V)$ be an intuitionistic neighbourhood model.
  For all formulas $\phi \in \Lbd$ and all worlds $w, v \in W$,
  if $\mo{M}, w \Vdash \phi$ and $w \leq v$ then $\mo{M}, v \Vdash \phi$.
\end{lemma}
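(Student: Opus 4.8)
The plan is to prove persistence by induction on the structure of $\phi$, using the assumption $w \leq v$ and the transitivity of $\leq$ throughout. The base cases are immediate: for $\bot$ there is nothing to show, and for a proposition letter $p$, since $V(p) \in \up(W, \leq)$ is an upset, $w \in V(p)$ and $w \leq v$ give $v \in V(p)$. The conjunction and disjunction cases follow directly from the induction hypothesis applied to the immediate subformulas. The implication case is the standard intuitionistic argument: if $\mo{M}, w \Vdash \phi \to \psi$ and $w \leq v$, then for any $v' \geq v$ we have $v' \geq w$ by transitivity, so the truth of $\phi \to \psi$ at $w$ directly yields that $\mo{M}, v' \Vdash \phi$ implies $\mo{M}, v' \Vdash \psi$; hence $\mo{M}, v \Vdash \phi \to \psi$. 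Note that the implication and modal cases do not even require the induction hypothesis, since their satisfaction clauses already quantify over all $\leq$-successors.

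The interesting cases are the two modalities, and here the key observation is that their satisfaction clauses from Definition~\ref{def:inm} are already phrased as universal statements over $\leq$-successors, so persistence reduces to transitivity. For $\Box\phi$: suppose $\mo{M}, w \Vdash \Box\phi$ and $w \leq v$. Then there is some $a \in N_w$ witnessing the clause at $w$. First I would check that this same $a$ serves as a witness at $v$. Since $w \in \dom(a)$ and $\dom(a)$ is an upset (Definition~\ref{def:in}), $w \leq v$ gives $v \in \dom(a)$, so $a \in N_v$. Moreover, for any $w' \geq v$ we have $w' \geq w$ by transitivity, so the condition ``$u \in a(w')$ implies $\mo{M}, u \Vdash \phi$'' already holds for all such $w'$ by the witnessing property of $a$ at $w$. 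Hence $a$ witnesses $\mo{M}, v \Vdash \Box\phi$. No appeal to the induction hypothesis on $\phi$ is needed.

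For $\Diamond\phi$ the argument is analogous and again exploits that the clause quantifies over all $w' \geq w$: suppose $\mo{M}, w \Vdash \Diamond\phi$ and $w \leq v$. To show $\mo{M}, v \Vdash \Diamond\phi$, take any $w' \geq v$ and any $a \in N_{w'}$. By transitivity $w' \geq w$, so the satisfaction of $\Diamond\phi$ at $w$ applied to this same $w'$ and $a$ directly provides a $u \in a(w')$ with $\mo{M}, u \Vdash \phi$, which is exactly what is required at $v$. This completes the modal cases.

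I do not expect any genuine obstacle here, as the lemma is routine; the only point deserving care is that the domains of intuitionistic neighbourhoods are upsets, which is precisely what guarantees that a witnessing neighbourhood $a \in N_w$ remains available as $a \in N_v$ after moving up. This is the one place where the structural Definition~\ref{def:in} (rather than mere transitivity of $\leq$) is used, and it is worth flagging it explicitly in the $\Box$ case.
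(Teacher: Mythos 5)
Your proof is correct and is exactly the routine structural induction the paper alludes to (the paper omits the details, noting only that the lemma ``can be proven by a routine induction on the structure of $\phi$''). Your treatment of the key points—the valuation taking values in upsets for the base case, transitivity of $\leq$ for the implication and $\Diamond$ cases, and the upset property of $\dom(a)$ guaranteeing that the witnessing neighbourhood survives in the $\Box$ case—fills in those details correctly.
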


  While we do not need bounded morphisms,
  for future reference we define isomorphisms between models.

\begin{definition}
  Let $\mo{M} = (W, \leq, N, V)$ and $\mo{M}' = (W', \leq', N', V')$ be two
  intuitionistic Kripke models.
  An \emph{isomorphism} from $\mo{M}$ to $\mo{M}'$ consists of a pair
  $(\alpha, \nu)$ of bijections $\alpha : W \to W'$ and $\nu : N \to N'$
  such that for all $w, v \in W$, $a \in N$ and $u \in \dom(a)$:
  \begin{enumerate}\itemsep=0em
  \renewcommand{\labelenumi}{(\theenumi) }
    \renewcommand{\theenumi}{I$_{\arabic{enumi}}$}
    \item $w \leq v$ if and only if $\alpha(w) \leq' \alpha(v)$
    \item $w \in \dom(a)$ if and only if $\alpha(w) \in \dom(\nu(a))$
    \item $w \in a(u)$ if and only if $\alpha(w) \in (\nu(a))(\alpha(u))$
    \item $w \in V(p_i)$ if and only if $\alpha(w) \in V'(p_i)$
  \end{enumerate}
\end{definition}

\begin{lemma}\label{lem:iso}
  Let $\mo{M} = (W, \leq, N, V)$ and $\mo{M}' = (W', \leq', N', V')$ be two
  intuitionistic Kripke models
  and $(\alpha, \nu) : \mo{M} \to \mo{M}'$ an isomorphism.
  Then for all $w \in W$ and $\phi \in \Lbd$, $\mo{M}, w \Vdash \phi$
  if and only if $\mo{M}', \alpha(w) \Vdash \phi$.
\end{lemma}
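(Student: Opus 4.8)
The plan is to prove Lemma~\ref{lem:iso} by induction on the structure of $\phi$, showing simultaneously that the isomorphism condition is preserved not just at the fixed world $w$ but at every world of the model. First I would observe that the propositional base case ($\phi = p_i$) is immediate from condition (I$_4$), and that $\bot$ is handled trivially since it holds nowhere. The binary connectives $\wedge$ and $\vee$ follow directly from the induction hypothesis, as their semantic clauses are defined pointwise at the world $w$. For the implication case $\phi \to \psi$, I would use condition (I$_1$): since satisfaction of $\phi \to \psi$ at $w$ quantifies over all $v \geq w$, and $\alpha$ preserves and reflects the order $\leq$ while being a bijection, the successors of $w$ correspond exactly to the successors of $\alpha(w)$; the induction hypothesis applied at each such successor then closes this case.

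The modal cases are where the real work lies. For $\Box\phi$, recall that $\mo{M}, w \Vdash \Box\phi$ asserts the existence of some $a \in N_w$ such that for all $w' \geq w$, every $v \in a(w')$ satisfies $\phi$. I would transport the witnessing neighbourhood $a$ across the bijection $\nu$: by (I$_2$), $w \in \dom(a)$ iff $\alpha(w) \in \dom(\nu(a))$, so $a \in N_w$ corresponds to $\nu(a) \in N'_{\alpha(w)}$. The membership condition translates via (I$_3$): $v \in a(u)$ iff $\alpha(v) \in (\nu(a))(\alpha(u))$, applied with $u = w'$. Combining (I$_1$) to match successors $w'$ with $\alpha(w')$, (I$_3$) to match elements, and the induction hypothesis (that $\mo{M}, v \Vdash \phi$ iff $\mo{M}', \alpha(v) \Vdash \phi$) yields the equivalence. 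The $\Diamond\phi$ case is dual: here one quantifies universally over $w' \geq w$ and $a \in N_{w'}$, and existentially over $v \in a(w')$; since $\nu$ is a bijection, the universal quantification over neighbourhoods of $w'$ matches the universal quantification over neighbourhoods of $\alpha(w')$, and the same three conditions plus the induction hypothesis close the argument.

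I expect the main subtlety, rather than a genuine obstacle, to be bookkeeping in the modal clauses: one must apply condition (I$_3$) at the shifted world $w'$ rather than at $w$ itself, and verify that $\nu$ restricts to a bijection between $N_{w'}$ and $N'_{\alpha(w')}$ for every $w'$, not merely at the base world. This is exactly what conditions (I$_1$)--(I$_3$) guarantee uniformly across all worlds, so the induction goes through once the statement is set up to range over arbitrary $w$. Because all semantic clauses reduce membership and accessibility facts to the four isomorphism conditions, and because $\alpha$ and $\nu$ are bijections (so both directions of each biconditional are available), the proof is a routine---if notationally dense---structural induction, with no step requiring more than a direct appeal to the defining conditions together with the induction hypothesis.
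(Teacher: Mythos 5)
Your proof is correct and is precisely the routine structural induction the paper has in mind: the paper states Lemma~\ref{lem:iso} without proof, treating it as an immediate consequence of conditions (I$_1$)--(I$_4$). Your handling of the modal cases---transporting the witnessing neighbourhood along $\nu$, matching successors of $w$ with successors of $\alpha(w)$ via (I$_1$) and bijectivity, and applying (I$_2$) and (I$_3$) at the shifted world $w'$ rather than at $w$---is exactly the intended argument.
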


  Every $\IFOM$-structure gives rise to an intuitionistic neighbourhood model.

\begin{definition}\label{def:IFOM-to-inm}
  Let $\fomo{M} = (W, \leq, \I)$ be
  an $\IFOM$-structure.
  Then we define the intuitionistic Kripke frame $(W^{\bul}, \leqq)$
  by
  \begin{align*}
    &W^{\bullet} = \{ \langle w, x \rangle \mid w \in W, x \in \I(\s,w) \},
    &\langle w, x \rangle \leqq \langle w', x' \rangle \iff w \leq w' \text{ and } x = x'
  \end{align*}
  For each $a \in \bigcup \{ \I(\n, w) \mid w \in W \}$, define an intuitionistic
  neighbourhood $a^{\bullet} : W^{\bullet} \rightharpoonup \fun{P}(W^{\bullet})$
  by setting
  $$
    a^{\bul}(\langle w, x \rangle)
      = \{ \langle w, y \rangle \in W^{\bullet} \mid (a, y) \in \I(\E, w) \}
  $$
  if $a \in \I(\n, w)$ and $(x, a) \in \I(\N, w)$,
  and leaving $a^{\bul}$ undefined otherwise.
  Let $N^{\bullet} = \{ a^{\bullet} \mid a \in \I(\n, w) \text{ for some } w \in W \}$.
  Then $(W^{\bullet}, \leqq, N^{\bullet})$ is an intuitionistic neighbourhood frame.
  Finally, define the valuation $V^{\bullet}$ by
  $$
    V^{\bullet}(p_i) = \{ \langle w, x \rangle \in W^{\bullet} \mid x \in \I(\P_i, w) \}.
  $$
  and let $\fomo{M}^{\bul} = (W^{\bul}, \leqq, N^{\bul}, V^{\bul})$ be the
  resulting model.
\end{definition}

\begin{lemma}
  If $\fomo{M}$ is a $\IFOM$-structure then $\fomo{M}^{\bul}$ is an
  intuitionistic neighbourhood model.
\end{lemma}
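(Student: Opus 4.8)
The plan is to verify, one at a time, the three ingredients that make $\fomo{M}^{\bul}$ an intuitionistic neighbourhood model in the sense of Definition~\ref{def:inm}, each time reducing the claim to the persistence condition built into $\IFOM$-structures, namely that $w \leq w'$ implies $\I(\mathsf{X}, w) \subseteq \I(\mathsf{X}, w')$ for all $\mathsf{X} \in \{ \s, \n, \N, \E, \P_i \}$. First I would check that $(W^{\bul}, \leqq)$ is an intuitionistic Kripke frame, i.e.\ that $\leqq$ is a partial order. Reflexivity and transitivity follow immediately from those of $\leq$ on the first coordinate together with reflexivity and transitivity of equality on the second, and antisymmetry follows from antisymmetry of $\leq$: if $\langle w, x \rangle \leqq \langle w', x' \rangle \leqq \langle w, x \rangle$ then $w \leq w' \leq w$ forces $w = w'$, while $x = x'$ holds by definition. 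One should also note that $\leqq$ is well-defined as a relation on $W^{\bul}$: if $\langle w, x \rangle \in W^{\bul}$ and $w \leq w'$ then $x \in \I(\s, w) \subseteq \I(\s, w')$ by monotonicity, so $\langle w', x \rangle$ is again an element of $W^{\bul}$.

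Next I would show that each $a^{\bul} \in N^{\bul}$ is an intuitionistic neighbourhood in the sense of Definition~\ref{def:in}. By construction $a^{\bul}$ is a partial function $W^{\bul} \rightharpoonup \fun{P}(W^{\bul})$, and it is single-valued and well-defined because both the defining condition ``$a \in \I(\n, w)$ and $(x, a) \in \I(\N, w)$'' and the prescribed value depend only on the domain element $a$ and on the world-component $w$; moreover each value $a^{\bul}(\langle w, x \rangle)$ is by definition a subset of $W^{\bul}$. The one substantive point is that $\dom(a^{\bul})$ is an upset: suppose $\langle w, x \rangle \in \dom(a^{\bul})$ and $\langle w, x \rangle \leqq \langle w', x \rangle$, i.e.\ $a \in \I(\n, w)$, $(x, a) \in \I(\N, w)$ and $w \leq w'$. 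Monotonicity of $\I(\n, \cdot)$ gives $a \in \I(\n, w')$ and monotonicity of $\I(\N, \cdot)$ gives $(x, a) \in \I(\N, w')$, so $\langle w', x \rangle \in \dom(a^{\bul})$ as required. This establishes that $(W^{\bul}, \leqq, N^{\bul})$ is an intuitionistic neighbourhood frame.

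Finally I would verify that the valuation lands in upsets, i.e.\ $V^{\bul}(p_i) \in \up(W^{\bul}, \leqq)$ for each $p_i$: if $\langle w, x \rangle \in V^{\bul}(p_i)$ and $w \leq w'$, then $x \in \I(\P_i, w) \subseteq \I(\P_i, w')$, whence $\langle w', x \rangle \in V^{\bul}(p_i)$. Combining the three parts shows that $\fomo{M}^{\bul}$ is an intuitionistic neighbourhood model. I expect no genuine obstacle here, since every clause reduces to persistence of the corresponding interpretation; the only point requiring slight care is the upset condition on $\dom(a^{\bul})$, which must invoke the monotonicity of both $\I(\n, \cdot)$ and $\I(\N, \cdot)$ simultaneously.
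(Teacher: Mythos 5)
Your proof is correct and follows essentially the same route as the paper's: a direct verification that $\leqq$ is a partial order, that each $\dom(a^{\bul})$ is an upset, and that $V^{\bul}$ maps proposition letters to upsets, with every step reduced to the persistence condition of $\IFOM$-structures. In fact your handling of the domain condition is slightly more precise than the paper's, which cites only the monotonicity of $\I(\n, \cdot)$, whereas (as you note) membership in $\dom(a^{\bul})$ also requires $(x, a) \in \I(\N, w)$, so the monotonicity of $\I(\N, \cdot)$ is genuinely needed as well.
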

\begin{proof}
  Since $\leq$ is a partial order, so is $\leqq$.
  The domain of each $a^{\bul}$ is an upset of $(W^{\bul}, \leqq)$ because
  $w \leq w'$ implies $\I(\n, w) \subseteq \I(\n, w')$.
  To see that the valuation sends propositions to upsets,
  suppose $\langle w, x \rangle \in V^{\bul}(p_i)$ and
  $\langle w, x \rangle \leqq \langle w', x' \rangle$.
  Then $w \leq w'$ and $x = x'$ and it follows from the fact that
  $\I(\P_i, w) \subseteq \I(\P_i, w')$ that $\langle w', x' \rangle \in V^{\bul}(p_i)$.
\end{proof}

\begin{proposition}\label{prop:truth-str-to-inm}
  For every $\IFOM$-structures
  $\fomo{M} = (W, \leq, \I)$
  and all $w \in W$, $x \in \I(\s, w)$ and $\phi \in \Lbd$,
  \begin{equation*}
    \fomo{M}, w, x \models \phi \iff \fomo{M}^{\bul}, \langle w, x \rangle \Vdash \phi.
  \end{equation*}
\end{proposition}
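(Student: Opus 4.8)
The plan is to proceed by structural induction on $\phi$, comparing the truth clauses of Definition~\ref{def:IM-structure} on the left with those of Definition~\ref{def:inm} on the right, reading off the data of $\fomo{M}^\bul$ from Definition~\ref{def:IFOM-to-inm}. Throughout, the inductive hypothesis is applied in the form: $\fomo{M}, w', y \models \psi$ if and only if $\fomo{M}^\bul, \langle w', y\rangle \Vdash \psi$, for every proper subformula $\psi$ and all $w' \geq w$, $y \in \I(\s, w')$. The persistence facts $\I(\mathsf{X}, w) \subseteq \I(\mathsf{X}, w')$ for $w \leq w'$ (with $\mathsf{X} \in \{\s, \n, \N, \E, \P_i\}$) will be used repeatedly.

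The atomic case $p_i$ is immediate from the definition of $V^\bul$, and $\bot$ holds nowhere on either side. The cases $\wedge$ and $\vee$ follow at once from the inductive hypothesis, since the two sets of clauses coincide. For $\to$ the key observation is that, by the definition of $\leqq$, the $\leqq$-successors of $\langle w, x\rangle$ are exactly the pairs $\langle w', x\rangle$ with $w' \geq w$; hence the universal quantification over $\leqq$-successors on the right matches the quantification over $w' \geq w$ with $x$ held fixed on the left, and the equivalence transfers through the inductive hypothesis. These cases are routine.

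The real content lies in the two modal cases, where I would unwind Definition~\ref{def:IFOM-to-inm} carefully. For $\Box$, a witness on the left is a neighbourhood $a \in \I(\n, w)$ with $(x, a) \in \I(\N, w)$; by construction this is precisely the condition $\langle w, x\rangle \in \dom(a^\bul)$, i.e.\ $a^\bul \in N^\bul_{\langle w, x\rangle}$, so the existential witnesses correspond. For such a witness, persistence guarantees that $a^\bul$ is defined at every $\langle w', x\rangle$ with $w' \geq w$, with value $\{\langle w', y\rangle \mid (a, y) \in \I(\E, w')\}$. Thus ``$v \in a^\bul(\langle w', x\rangle)$'' ranges exactly over $v = \langle w', y\rangle$ with $(a, y) \in \I(\E, w')$, and applying the inductive hypothesis to $\phi$ turns the right-hand clause into the left-hand one verbatim. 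The diamond case is dual: $\leqq$-successors $\langle w', x\rangle$ correspond to $w' \geq w$, neighbourhoods $b \in N^\bul_{\langle w', x\rangle}$ correspond to elements $a' \in \I(\n, w')$ with $(x, a') \in \I(\N, w')$, and the existence of some $v \in b(\langle w', x\rangle)$ satisfying $\phi$ corresponds to the existence of $y'$ with $(a', y') \in \I(\E, w')$ and $\fomo{M}, w', y' \models \phi$.

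I expect the main obstacle to be bookkeeping rather than conceptual: one must check that $a \mapsto a^\bul$ behaves well even though it need not be injective. If $a_1^\bul = a_2^\bul$, then these neighbourhoods take identical values wherever both are defined, so the universal quantification over $N^\bul$ in the diamond clause and the choice of an existential witness in the box clause both match the corresponding quantifications over the elements of $\I(\n, -)$ without ambiguity. The other point requiring care is that the domain conditions (``$a^\bul$ is defined at $\langle w', x\rangle$'') and the side conditions built into the interpretation of $\Box$ and $\Diamond$ line up exactly — which is precisely what persistence of $\I$ along $\leq$ secures.
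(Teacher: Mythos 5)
Your proposal is correct and follows essentially the same route as the paper's own proof: induction on $\phi$, with the propositional cases handled by the characterisation of $\leqq$-successors and the modal cases by the correspondence between elements $a$ of sort $\n$ and the neighbourhoods $a^{\bul}$ of $\fomo{M}^{\bul}$. In fact you are more explicit than the paper, which compresses the $\Box$ and $\Diamond$ cases into a remark about this correspondence; your observations that $\dom(a^{\bul})$ encodes exactly the conjunction $a \in \I(\n,w)$ and $(x,a) \in \I(\N,w)$, and that non-injectivity of $a \mapsto a^{\bul}$ is harmless, are precisely the details the paper leaves implicit.
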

\begin{proof}
  We use induction on the structure of $\phi$.
  If $\phi = \bot$ then the proposition follows from the fact that $\bot$
  is never satisfied. If $\phi = p_i \in \Prop$, then we have
  \begin{equation*}
    \fomo{M}, w, x \models p_i
      \iff x \in \I(\P_i, w)
      \iff \langle w, x \rangle \in V^{\bul}(p_i)
      \iff \fomo{M}^{\bul}, \langle w, x \rangle \Vdash p_i.
  \end{equation*}
  The inductive steps for $\phi = (\psi \wedge \chi)$ and $\phi = (\psi \vee \chi)$
  are straightforward. Suppose $\phi = \psi \to \chi$. Then
  \begin{align*}
    \fomo{M}, w, x \Vdash \psi \to \chi
      &\iff \text{for all } w' \geq w, \;\;
            \fomo{M}, w', x \Vdash \psi \text{ implies } \fomo{M}, w', x \Vdash \chi \\
      &\iff \text{for all } \langle w', x' \rangle \geqq \langle w, x \rangle, \;\;
            \fomo{M}^{\bul}, \langle w', x' \rangle \Vdash \psi
            \text{ implies } \fomo{M}^{\bul}, \langle w', x' \rangle \Vdash \chi \\
      &\iff \fomo{M}^{\bul}, \langle w, x \rangle \Vdash \psi \to \chi.
  \end{align*}
  Here the second ``iff'' uses the definition of $\geqq$ and the induction
  hypothesis.
  
  Next suppose $\phi = \Box\psi$.
  If $\fomo{M}, w, x \models \Box\psi$ then there exists an
  $a \in \I(\n, w)$ such that $(x, a) \in \I(\N, w)$ and
  for all $w' \geq w$ and $x' \in \I(\s, w')$ we have that $(a, x') \in \I(\E, w')$
  implies $\fomo{M}, w', x' \Vdash \phi$.
  This $a$ gives rise to a neighbourhood $a^{\bul}$ of
  $\fomo{M}^{\bul}$ witnessing the truth of
  $\fomo{M}^{\bul}, \langle w, x \rangle \Vdash \Box\psi$.
  Conversely, the existence of a suitable neighbourhood $a^{\bul}$
  witnessing that $\fomo{M}^{\bul}, \langle w, x \rangle \Vdash \Box\psi$
  yields $a \in \I(\n, w)$ satisfying the desired properties.
  The case $\phi = \Diamond\psi$ also follows from the fact that neighbourhoods
  of $\fomo{M}^{\bul}$ correspond precisely to elements of sort $\n$ in $\fomo{M}$.
\end{proof}

\begin{theorem}[Soundness]\label{thm:sound}
  If $\Gamma \vdash_{\IMfC} \phi$ then $\Gamma \Vdash^{\inm} \phi$
  and $\Gamma \models \phi$.
\end{theorem}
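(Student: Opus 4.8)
The plan is to prove soundness by induction on the length of a derivation of $\Gamma \vdash \phi$ in $\IMfC$. Since $\IMfC = \IMC(\operatorname{Ax})$ with $\operatorname{Ax} = \{\ANega, \AInt\}$, I must verify two things: that each rule of the generalised Hilbert calculus (from Definition~\ref{def:ghc}) preserves both semantic consequences $\Gamma \Vdash^{\inm} \phi$ and $\Gamma \models \phi$, and that every axiom instance is valid on both semantics. For the relation $\Gamma \models \phi$ (validity over $\IFOM$-structures via Definition~\ref{def:IM-structure}), I would actually route everything through Theorem~\ref{thm:sc-triv}, which identifies $\Gamma \models \phi$ with membership in $\IMf$; but since the point here is to show $\IMfC \subseteq \IMf$ and $\IMfC \subseteq {\Vdash^{\inm}}$, it is cleanest to argue directly. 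In fact, Proposition~\ref{prop:truth-str-to-inm} shows that truth in $\fomo{M}^{\bul}$ matches truth in $\fomo{M}$, so any consecution valid on \emph{all} intuitionistic neighbourhood models is automatically valid on all $\IFOM$-structures; hence it suffices to establish $\Gamma \Vdash^{\inm} \phi$, and then $\Gamma \models \phi$ follows for free. I would state this reduction explicitly as the first step.

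With that reduction in hand, the bulk of the argument is checking the rules and axioms against the intuitionistic neighbourhood semantics of Definition~\ref{def:inm}. The element rule $(\REl)$ and axiom rule $(\RAx)$ are immediate once the axioms are handled; modus ponens $(\RMP)$ is routine given the Kripke-style clause for $\to$ and persistence (the Lemma following Definition~\ref{def:inm}). The intuitionistic-logic axiom instances in $\Ax$ are sound because the propositional and implicational clauses are exactly the standard intuitionistic Kripke clauses. For the monotonicity rules, I would use the induction hypothesis: if $\emptyset \vdash \phi \to \psi$ is derivable then, inductively, $\emptyset \Vdash^{\inm} \phi \to \psi$, which means $\phi \to \psi$ holds at every world of every model; I then fix an arbitrary model and world $w$ satisfying $\Box\phi$ (resp.\ $\Diamond\phi$) and push the truth of $\phi \to \psi$, together with persistence, through the box/diamond clauses. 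For $\RMonB$, the same witnessing neighbourhood $a \in N_w$ works, since at every relevant $v$ we upgrade $\mo{M}, v \Vdash \phi$ to $\mo{M}, v \Vdash \psi$; for $\RMonD$, each successor neighbourhood still contains a witness for $\phi$, hence for $\psi$.

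The genuinely content-bearing steps are verifying the two special axioms. For $\ANega$, namely $(\Box p \wedge \Diamond\neg p) \to \bot$, I would suppose toward a contradiction that some world $w$ in some model satisfies both $\Box p$ and $\Diamond\neg p$. The box gives a neighbourhood $a \in N_w$ such that $v \in a(w')$ implies $\mo{M}, v \Vdash p$ for all $w' \geq w$; the diamond, instantiated at $w' = w$ with this same $a$, yields some $v \in a(w)$ with $\mo{M}, v \Vdash \neg p$, i.e.\ $\mo{M}, v \not\Vdash p$ (taking the successor to be $v$ itself), which is the contradiction. For $\AInt$, namely $(\Box\top \to \Diamond p) \to \Diamond p$, I would unfold the semantics of $\to$ and $\Diamond$ at a world $w$: assuming $\mo{M}, w \Vdash \Box\top \to \Diamond p$, I must show $\mo{M}, w \Vdash \Diamond p$, i.e.\ for every $w' \geq w$ and every $a \in N_{w'}$ there is $v \in a(w')$ with $\mo{M}, v \Vdash p$. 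The key observation is that the existence of any neighbourhood $a \in N_{w'}$ makes $\Box\top$ true at $w'$, so the hypothesis forces $\Diamond p$ at $w'$, which unwinds to the required witness. I expect this axiom to be the main obstacle: one must be careful to match the nested quantifier structure of the $\Diamond$ clause (quantifying over all $w'' \geq w'$ and all neighbourhoods there) against what $\Box\top$ at $w'$ actually delivers, and to check that $\Box\top$ is forced precisely when $N_{w'}$ is nonempty. Getting this interaction right — and confirming it fails without the intuitionistic $\Diamond$ clause's universal quantification over successors — is where the real care lies.
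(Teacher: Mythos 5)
Your proposal is correct and follows essentially the same route as the paper's proof: the same initial reduction (establish $\Gamma \Vdash^{\inm} \phi$ and obtain $\Gamma \models \phi$ via Proposition~\ref{prop:truth-str-to-inm}), the same induction over the calculus rules, and the same arguments for the key axioms, including instantiating the $\Diamond$ clause at $w'=w$ with the box-witnessing neighbourhood for $(\ANega)$ and observing that any $a \in N_{w'}$ witnesses $\Box\top$ for $(\AInt)$.
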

\begin{proof}
  It suffices to prove that $\Gamma \vdash_{\IMfC} \phi$ implies $\Gamma \Vdash^{\inm} \phi$.
  Proposition~\ref{prop:truth-str-to-inm} then implies $\Gamma \models \phi$.
  We use induction on a proof $\delta$ of $\Gamma \vdash_{\IMfC} \phi$.
  This gives rise to the following five cases.
  
  \medskip\noindent
  \textit{The last rule in $\delta$ is $(\REl)$.}
    In this case we have $\phi \in \Gamma$. Clearly, this implies that any world
    in any intuitionistic neighbourhood model that satisfies $\Gamma$ also
    satisfies $\phi$, so $\Gamma \Vdash^{\inm} \phi$.
  
  \medskip\noindent
  \textit{The last rule in $\delta$ is $(\RAx)$.}
    Since $\Gamma$ is arbitrary, we need to prove that the substitution
    instances of all axioms are valid on all intuitionistic neighbourhood models.
    For the axioms of intuitionistic logic, this is well known.
    
    To prove validity of $(\ANega)$, let $\mo{M} = (W, \leq, N, V)$ be any
    intuitionistic neighbourhood model
    and suppose $\mo{M}, w \Vdash \Box\phi \wedge \Diamond\neg\phi$.
    Then $\mo{M}, w \Vdash \Box\phi$, so there exists an intuitionistic
    neighbourhood $a \in N$ such that $w \in \dom(a)$ and
    $a(w') \subseteq \llb \phi \rrb^{\mo{M}}$ for all $w' \geq w$.
    In particular, $a(w) \subseteq \llb \phi \rrb^{\mo{M}}$.
    But we also have $\mo{M}, w \Vdash \Diamond\neg\phi$, which
    implies that we can find some $v \in a(w)$ such that $\mo{M}, v \Vdash \neg\phi$.
    So $\mo{M}, v \Vdash \phi$ and $\mo{M}, v \not\Vdash \phi$, a contradiction.
    We conclude that no world satisfies $\Box\phi \wedge \Diamond\neg\phi$,
    hence $(\Box\phi \wedge \Diamond\neg\phi) \to \bot$ is valid.
    Since $\mo{M}$ is arbitrary, this proves
    $\Gamma \Vdash^{\inm} (\Box\phi \wedge \Diamond\neg\phi) \to \bot$.
    
    We argue similarly for $(\AInt)$.
    Suppose $\mo{M}, w \Vdash \Box\top \to \Diamond\psi$.
    To see that $\mo{M}, w \Vdash \Diamond\psi$, let $w' \geq w$
    and suppose $w' \in \dom(a)$ for some $a \in N$. Then $a$ witnesses
    $\mo{M}, w' \Vdash \Box\top$. Since $w \leq w'$, the definition of the
    interpretation of implication gives $\mo{M}, w' \Vdash \Diamond\psi$.
    Again by definition, this means that there exists a $v' \in a(w')$ such
    that $\mo{M}, v' \Vdash \psi$. Therefore $\mo{M}, w \Vdash \Diamond\psi$.
    Since $\mo{M}$ and $w$ are arbitrary, every world of every
    intuitionistic neighbourhood model satisfies
    $(\Box\top \to \Diamond\psi) \to \Diamond\psi$,
    and hence $\Gamma \Vdash^{\inm} (\Box\top \to \Diamond\psi) \to \Diamond\psi$.

  \medskip\noindent
  \textit{The last rule in $\delta$ is $(\RMP)$.}
    By induction $\Gamma \Vdash^{\inm} \psi \to \phi$
    and $\Gamma \Vdash^{\inm} \psi$. So any world $w$ in any model $\mo{M}$
    that satisfies $\Gamma$, also satisfies $\psi \to \phi$ and $\phi$.
    By definition we then get $\mo{M}, w \Vdash \phi$.
    Therefore $\Gamma \Vdash^{\inm} \phi$.
  
  \medskip\noindent
  \textit{The last rule in $\delta$ is $(\RMonB)$ or $(\RMonD)$.}
    In this case $\phi$ must be of the form $\Star\psi \to \Star\chi$,
    where $\Star \in \{ \Box, \Diamond \}$,
    and by induction $\emptyset \Vdash^{\inm} \psi \to \chi$.
    Then we must have $\llb \psi \rrb^{\mo{M}} \subseteq \llb \chi \rrb^{\mo{M}}$
    for any model $\mo{M}$. Using this, it follows immediately from the
    definition of the interpretations of $\Box$ and $\Diamond$ that
    $\mo{M}, w \Vdash \Star\psi$ implies $\mo{M}, w \Vdash \Star\chi$
    for any intuitionistic neighbourhood model $\mo{M}$ and world $w$.
\end{proof}

\subsection{Coherent intuitionistic neighbourhoods}\label{subsec:coherent}

  If we think of an intuitionistic Kripke frame as a collection of information
  states which gain increasingly more information, and of intuitionistic
  neighbourhoods as evidence, then one would expect that evidence does not
  get lost when progressing along the intuitionistic accessibility relation.
  If anything, evidence should get more specific. Thus, a condition
  we may like to put on our intuitionistic neighbourhoods is
  \begin{enumerate}
  \renewcommand{\labelenumi}{(\theenumi) }
    \renewcommand{\theenumi}{N$_{\arabic{enumi}}$}
    \item \label{it:in-2}
          if $w \leq w'$ and $v \in a(w)$ then there exists a $v' \in W$
          such that $v' \in a(w')$ such that $v \leq v'$.
  \end{enumerate}
  Reasoning in an ideal world, we could also hope that more specific evidence
  will eventually be found. That is,
  \begin{enumerate}
  \renewcommand{\labelenumi}{(\theenumi) }
    \renewcommand{\theenumi}{N$_{\arabic{enumi}}$}
    \setcounter{enumi}{1}
    \item \label{it:in-3}
          if $v \in a(w)$ and $v \leq v'$ then there exists a $w' \in W$
          such that $w \leq w'$ and $v' \in a(w')$.
  \end{enumerate}

\begin{definition}\label{def:coherent}
  Let $(W, \leq)$ be an intuitionistic Kripke frame. An intuitionistic
  neighbourhood $a : W \rightharpoonup \fun{P}(W)$ is called \emph{coherent}
  if it satisfies~\eqref{it:in-2} and~\eqref{it:in-3}.
  An intuitionistic neighbourhood frame or model is called coherent if all its
  intuitionistic neighbourhoods are coherent.
  We write $\Gamma \Vdash^{\coh} \phi$ if the consecution $\Gamma \vdash \phi$
  is valid in all coherent intuitionistic neighbourhood models.
\end{definition}

\begin{figure}[h!]
  \centering
    \begin{tikzpicture}[scale=.75]
        \node (w)  at (0,0)   {$w$};
        \node (v)  at (2,0)   {$v$};
        \node (wp) at (0,1.5) {$w'$};
        \node (vp) at (2,1.5) {$v'$};
        \node (N2) at (1,-.75) {\eqref{it:in-2}};
        \draw[-latex] (w) to (wp);
        \draw[-Circle] (w) to node[above]{\footnotesize{$a$}}(v);
        \draw[dashed, -latex] (v) to (vp);
        \draw[dashed, -Circle] (wp) to node[above]{\footnotesize{$a$}}(vp);
        \node (w)  at (4,0)   {$w$};
        \node (v)  at (6,0)   {$v$};
        \node (wp) at (4,1.5) {$w'$};
        \node (vp) at (6,1.5) {$v'$};
        \node (N2) at (5,-.75) {\eqref{it:in-3}};
        \draw[dashed, -latex] (w) to (wp);
        \draw[-Circle] (w) to node[above]{\footnotesize{$a$}}(v);
        \draw[-latex] (v) to (vp);
        \draw[dashed, -Circle] (wp) to node[above]{\footnotesize{$a$}}(vp);
    \end{tikzpicture}
  \caption{A depiction of the coherence conditions.
           Here the arrows indicate the intuitionistic
           accessibility relation and%
           $\smash{\protect\tikz[baseline=-.8mm]
            {\protect\node (w) at (0,0) {$w$};
             \protect\node (v) at (1,0) {$v$};
             \protect\draw[-Circle] (w) to node[above,yshift=-1pt]{\footnotesize{$a$}}(v);}}$%
           indicates that $v \in a(w)$.}
  \label{fig:in}
\end{figure}
  
  Both coherence conditions are satisfied in intuitionistic Kripke models arising
  from $\IFOM$-structures via Definition~\ref{def:IFOM-to-inm},
  and in fact that is our motivation to investigate them further.
  Moreover, it turns out that we can modify any intuitionistic neighbourhood
  model into one satisfying~\eqref{it:in-2} and~\eqref{it:in-3} without
  affecting the truth of formulas.
  Intuitively, this is achieved by introducing copies of neighbourhoods
  relative to the worlds in their domain in such a way that they satisfy~\eqref{it:in-2}.
  Second, we unravel maximal worlds,
  i.e.~the worlds that have no other worlds strictly above them,
  in order to satisfy~\eqref{it:in-3}.
  Interestingly, the canonical model construction we give in
  Section~\ref{subsec:IM-complete} also requires an unravelling of
  maximal points.

\begin{definition}
  Let $\mo{M} = (W, \leq, N, V)$ be an intuitionistic neighbourhood model.
  Let
  $$
    W^{\coh} := \{ (w, 0) \in W \times \mb{N} \mid w \in W, w \text{ not maximal} \}
      \cup \{ (w, n) \in W \times \mb{N} \mid w \in W, w \text{ is maximal} \}.
  $$
  Define $\leq^{\coh}$ on $W^{\coh}$ by $(w, n) \leq^{\coh} (v, m)$ if
  $w \leq v$ and $n \leq m$.
  
  For any $w \in W$ and $a \in N$ such that $w \in \dom(a)$,
  define $a(w)^{\coh} := \{ (u, k) \in W^{\coh} \mid u \in a(w) \}$.
  Then, for $a \in N$ and $(v, m) \in W^{\coh}$ we define the intuitionistic
  neighbourhood $a_{v, m}$ to be the partial function with domain
  $\{ (w, n) \in W^{\coh} \mid (v, m) \leq (w, n) \}$ given by
  \begin{equation*}
    a_{v,m}(w, n)
      = \begin{cases}
          a(w)^{\coh} & \text{if } (v, m) = (w, n) \\
          {\uparrow}(\textstyle\bigcup \{ a(u)^{\coh} \mid v \leq u \}) & \text{if } (v, m) <^{\coh} (w, n)
        \end{cases}
  \end{equation*}
  Let $N^{\coh} = \{ a_{v,m} \mid a \in N, (v, m) \in W^{\coh} \}$.
  Finally, define a valuation by $V^{\coh}(p_i) = \{ (w, n) \in W^{\coh} \mid w \in V(p_i) \}$ and let $\mo{M}^{\coh} := (W^{\coh}, \leq^{\coh}, N^{\coh}, V^{\coh})$.
\end{definition}

\begin{lemma}
  Let $\mo{M}$ be an intuitionistic neighbourhood model.
  Then $\mo{M}^{\coh}$ is a coherent intuitionistic neighbourhood model.
\end{lemma}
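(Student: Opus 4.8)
The plan is to check the four conditions required for $\mo{M}^{\coh} = (W^{\coh}, \leq^{\coh}, N^{\coh}, V^{\coh})$ to be a coherent intuitionistic neighbourhood model: that $\leq^{\coh}$ is a partial order, that each $a_{v,m}$ has upward-closed domain, that $V^{\coh}$ takes values in $\up(W^{\coh}, \leq^{\coh})$, and that each $a_{v,m}$ satisfies the coherence conditions~\eqref{it:in-2} and~\eqref{it:in-3}.

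The first three are routine. Since $\leq^{\coh}$ is the product of the partial order $\leq$ with the usual order on $\mb{N}$, restricted to the subset $W^{\coh}$, it is again a partial order. The domain of $a_{v,m}$ is by definition $\{ (w,n) \in W^{\coh} \mid (v,m) \leq^{\coh} (w,n) \}$, a principal upset. And if $(w,n) \in V^{\coh}(p_i)$ and $(w,n) \leq^{\coh} (w',n')$, then $w \in V(p_i)$ and $w \leq w'$, so $w' \in V(p_i)$ as $V(p_i)$ is an upset, whence $(w',n') \in V^{\coh}(p_i)$.

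The crux is the coherence of each $a_{v,m}$ (which is defined precisely when $v \in \dom(a)$). I would write $T := {\uparrow}(\bigcup \{ a(u)^{\coh} \mid v \leq u \})$ for the value that $a_{v,m}$ takes at every point strictly above $(v,m)$; by construction $T$ is an upset of $W^{\coh}$, and $a(v)^{\coh} \subseteq T$ since the union includes the term $u = v$. For~\eqref{it:in-2}, suppose $(w,n) \leq^{\coh} (w',n')$ and $(u,k) \in a_{v,m}(w,n)$; I would take the witness $(u',k') = (u,k)$. Indeed $a_{v,m}(w,n)$ is either $a(v)^{\coh} \subseteq T$ or $T$, so $(u,k) \in T$; hence if $(v,m) <^{\coh} (w',n')$ then $(u,k) \in T = a_{v,m}(w',n')$, while if $(w',n') = (v,m)$ then also $(w,n) = (v,m)$ and $(u,k) \in a(v)^{\coh} = a_{v,m}(w',n')$. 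As $(u,k) \leq^{\coh} (u,k)$, this gives~\eqref{it:in-2}.

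The interesting condition is~\eqref{it:in-3}. Suppose $(u,k) \in a_{v,m}(w,n)$ and $(u,k) \leq^{\coh} (u',k')$. When $(v,m) <^{\coh} (w,n)$, the value $a_{v,m}(w,n) = T$ is an upset, so $(u',k') \in T = a_{v,m}(w,n)$ and $(w',n') = (w,n)$ works. The decisive case is the base point $(w,n) = (v,m)$: here $(u,k) \in a(v)^{\coh} \subseteq T$ forces $(u',k') \in T$, so it suffices to exhibit some $(w',n') >^{\coh} (v,m)$, at which the value is $T \ni (u',k')$. This is exactly where the unravelling of maximal worlds enters: if $v$ is not maximal a genuine successor of $v$ supplies such a point, and if $v$ is maximal the copy $(v, m+1) >^{\coh} (v, m)$ does. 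I expect this step --- guaranteeing a strict successor of the base point --- to be the only real obstacle, and it is precisely the reason the construction adds the infinitely many copies $(v,n)$ of each maximal world, ensuring that more specific evidence can always be found one level up.
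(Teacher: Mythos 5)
Your proof is correct and follows essentially the same route as the paper's: upward-closure of domains by definition, condition (N$_1$) via monotonicity of the values $a_{v,m}(w,n) \subseteq a_{v,m}(w',n')$ with the point itself as witness, and condition (N$_2$) via the upward-closedness of the value above the base point together with the existence of a strict successor. Your write-up is in fact more explicit than the paper's, which compresses the decisive step into the phrase ``by construction $(w,n)$ has a successor'' — you correctly identify that this successor must be strict and that its existence is exactly what the infinitely many copies of maximal worlds are for.
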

\begin{proof}
  We need to verify that $a_{v,m}$ is a coherent intuitionistic neighbourhood,
  for every $a \in N$ and $(v, m) \in W^{\coh}$.
  So let $a_{v,m} \in N^{\coh}$.
  Then its domain is an upset by definition.
  Item~\eqref{it:in-2} follows from the fact that if
  $(w, n) \in \dom(a_{v,m})$ and $(w, n) \leq^{\coh} (w', n')$, then
  $a_{v,m}(w, n) \subseteq a_{v,m}(w', n')$ by construction.
  For~\eqref{it:in-3}, suppose $(u,k) \in a_{v,m}(w,n)$ and $(u, k) \leq^{\coh} (u', k')$.
  By construction $(w, n)$ has a successor $(w', n')$ and
  $a_{v,m}(w', n')$ contains $(u, k)$ and is upwards closed, hence contains $(u', k')$.
\end{proof}

\begin{proposition}\label{prop:coherent}
  Let $\mo{M} = (W, \leq, N, V)$ be an intuitionistic neighbourhood model.
  Then for all $\phi \in \Lbd$ and all $(w, n) \in W^{\coh}$,
  \begin{equation*}
    \mo{M}^{\coh}, (w, n) \Vdash \phi \iff \mo{M}, w \Vdash \phi.
  \end{equation*}
\end{proposition}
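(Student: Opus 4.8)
The plan is to prove both directions simultaneously by induction on the structure of $\phi$, with the cases $\bot$ and $p_i$ being immediate (for $p_i$, directly from the definition $V^{\coh}(p_i) = \{(w,n) \mid w \in V(p_i)\}$) and the cases $\phi = \psi \wedge \chi$, $\phi = \psi \vee \chi$ routine. Before touching the harder cases, I would first record that the projection $\pi\colon W^{\coh}\to W$, $(w,n)\mapsto w$, is monotone and satisfies a back condition: whenever $w\le v$ in $W$ and $(w,n)\in W^{\coh}$, there is some $(v,m)$ with $(w,n)\le^{\coh}(v,m)$ and $\pi(v,m)=v$. This uses the shape of $W^{\coh}$: if $w$ is non-maximal then $n=0$, so any copy $(v,m)$ of $v$ lies above $(w,0)$; and if $w$ is maximal then $w\le v$ forces $v=w$, so $(w,n)$ itself works. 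The back condition makes the implication case completely standard: for $\phi=\psi\to\chi$ one translates successors of $(w,n)$ to successors of $w$ and back via $\pi$, invoking the induction hypothesis on $\psi$ and $\chi$.

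For the modal cases I would exploit the correspondence between a neighbourhood $a\in N$ of $\mo{M}$ and the neighbourhoods $a_{v,m}\in N^{\coh}$. Consider $\phi=\Box\psi$. For the forward direction, if $a\in N_w$ witnesses $\mo{M},w\Vdash\Box\psi$, I claim $a_{w,n}$ witnesses $\mo{M}^{\coh},(w,n)\Vdash\Box\psi$: at the base point $a_{w,n}(w,n)=a(w)^{\coh}$, so any $(u,k)$ there has $u\in a(w)$ and hence $\mo{M},u\Vdash\psi$ by assumption and $\mo{M}^{\coh},(u,k)\Vdash\psi$ by the induction hypothesis; at strictly larger points $a_{w,n}(w',n')={\uparrow}(\bigcup\{a(u)^{\coh}\mid w\le u\})$, so a member $(u',k')$ arises from some $(u'',k'')$ with $u''\in a(u)$, $w\le u$ and $u''\le u'$, whence $\mo{M},u''\Vdash\psi$, and the preceding persistence lemma upgrades this to $\mo{M},u'\Vdash\psi$. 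For the backward direction, a witness $a_{v,m}$ for $(w,n)$ satisfies $(v,m)\le^{\coh}(w,n)$ and $v\in\dom(a)$, so $w\in\dom(a)$ as domains are upsets; one then checks that the underlying $a$ witnesses $\Box\psi$ at $w$, because any $u\in a(w')$ with $w'\ge w$ yields a copy $(u,k)$ lying in $a_{v,m}$ at a suitable point above $(w,n)$ (either in the $a(w)^{\coh}$ clause or the upward-closure clause), so the induction hypothesis transfers truth of $\psi$ back down.

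The case $\phi=\Diamond\psi$ is dual and slightly cleaner. Forward: given $(w',n')\ge^{\coh}(w,n)$ and a neighbour $a_{v,m}$ of $(w',n')$, the underlying $a$ neighbours $w'\ge w$, so $\mo{M},w\Vdash\Diamond\psi$ supplies $u\in a(w')$ with $\mo{M},u\Vdash\psi$; a copy $(u,k)$ then lies in $a_{v,m}(w',n')$ (again splitting on whether $(v,m)=(w',n')$), giving the required diamond witness by the induction hypothesis. Backward: for any $w'\ge w$ and $a\in N_{w'}$, choose via the back condition a copy $(w',n')\ge^{\coh}(w,n)$ and use the neighbourhood $a_{w',n'}$, whose value at its own base point is exactly $a(w')^{\coh}$; applying $\mo{M}^{\coh},(w,n)\Vdash\Diamond\psi$ produces $(u,k)\in a(w')^{\coh}$ with $\mo{M}^{\coh},(u,k)\Vdash\psi$, i.e.\ $u\in a(w')$ with $\mo{M},u\Vdash\psi$ by the induction hypothesis.

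I expect the main obstacle to be the $\Box$ case, specifically making the upward-closure clause in the definition of $a_{v,m}$ interact correctly with truth: this is exactly the point where the persistence lemma is essential, since a member of the closure need not itself lie in any $a(u)^{\coh}$. A secondary subtlety is confirming that the unravelling of maximal points—the $\mb{N}$-indexed copies—does not break the back condition of $\pi$; once the back condition and persistence are secured, every modal equivalence reduces to unwinding the definitions of $\le^{\coh}$ and of $a_{v,m}$ together with the induction hypothesis.
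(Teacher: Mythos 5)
Your proposal is correct and takes essentially the same approach as the paper's proof: the same induction with the same witnesses (the neighbourhood $a_{w,n}$ for transferring $\Box\psi$ from $\mo{M}$ to $\mo{M}^{\coh}$, the underlying $a$ extracted from a witness $a_{v,m}$ for the converse, and the base-point value $a(w')^{\coh}$ of $a_{w',n'}$ in the diamond case). Your explicit back condition for the projection $\pi$ and the persistence argument for the upward-closure clause of $a_{v,m}$ merely spell out steps the paper leaves implicit (e.g.\ under ``by construction''), and your direct treatment of the backward diamond direction is just the contrapositive of the paper's argument.
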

\begin{proof}
  We prove the proposition by induction on the structure of $\phi$.
  The case $\phi = \bot$ holds because $\bot$ is never satisfied
  and if $\phi = p_i \in \Prop$ then the proposition follows from the
  definition of $V^{\coh}$. The inductive steps for conjunction, disjunction
  and implication are routine.
  
  \medskip\noindent
  \textit{Induction step for $\phi = \Box\psi$.}
    Suppose $\mo{M}^{\coh}, (w, n) \Vdash \Box\psi$ and let $a_{v,m}$ be
    the neighbourhood witnessing this. Then $a \in N$ and $v \in \dom(a)$.
    Since $(w, n) \in \dom(a_{v,m})$ we have $v \leq w$, so $w \in \dom(a)$.
    Let $w \leq w'$ and $u \in a(w')$. Then $(w, n) \leq^{\coh} (w', n)$
    and $(u, 0) \in a_{v,m}(w', n)$, hence $\mo{M}^{\coh}, (u, 0) \Vdash \psi$.
    By induction we get $\mo{M}, u \Vdash \psi$,
    and therefore the intuitionistic neighbourhood $a$ witnesses $\mo{M}, w \Vdash \Box\psi$.
    Conversely, if $\mo{M}, w \Vdash \Box\psi$ and $a$ witnesses this,
    then by construction $a_{w,n}$ is a neighbourhood which shows that
    $\mo{M}^{\coh}, (w, n) \Vdash \Box\psi$.
    
  \medskip\noindent
  \textit{Induction step for $\phi = \Diamond\psi$.}
    Suppose $\mo{M}, w \Vdash \Diamond\psi$.
    Let $(w', n') \in W^{\coh}$ and $a_{v,m} \in N^{\coh}$ be 
    such that $(w, n) \leq^{\coh} (w', n')$
    and $(w', n') \in \dom(a_{v,m})$. We need to find a world in
    $a_{v,m}(w', n')$ that satisfies $\psi$.
    By definition of $\leq^{\coh}$ and $a_{v,m}$ we get $w \leq w'$ and $v \leq w'$ and
    $w' \in \dom(a)$, so we can find some $u \in a(w')$ such that $\mo{M}, u \Vdash \psi$.
    By induction this means that $\mo{M}^{\coh}, (u, 0) \Vdash \psi$.
    Further, $(u, 0) \in a(w')^{\coh} \subseteq a_{v,m}(w', n')$, so we
    have found the desired world.
    
    For the converse we reason by contrapositive, so we assume
    $\mo{M}, w \not\Vdash \Diamond\psi$. Then there exists a
    $w' \geq w$ and a neighbourhood $a$ such that for all $u \in a(w')$ we
    have $\mo{M}, u \not\Vdash \psi$. The induction hypothesis then implies
    that for all $(u, k) \in a(w')^{\coh}$ we have
    $\mo{M}^{\coh}, (u, k) \not\Vdash \psi$.
    Furthermore, we have $(w', n) \in \dom(a_{w',n})$ and
    $a_{w',n}(w', n) = a(w')^{\coh}$, so using the fact that
    $(w, n) \leq^{\coh} (w', n)$ we find $\mo{M}^{\coh}, (w, n) \not\Vdash \Diamond\psi$.
\end{proof}

\begin{theorem}\label{thm:compl-coh}
  We have $\Gamma \Vdash^{\inm} \phi$ if and only if $\Gamma \Vdash^{\coh} \phi$.
\end{theorem}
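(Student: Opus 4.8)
The plan is to split the biconditional into its two directions. The left-to-right implication is immediate: every coherent intuitionistic neighbourhood model is in particular an intuitionistic neighbourhood model, so any consecution valid on the whole class is a fortiori valid on the subclass of coherent models. Hence $\Gamma \Vdash^{\inm} \phi$ yields $\Gamma \Vdash^{\coh} \phi$ with no further work.

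For the converse I would leverage the construction $\mo{M} \mapsto \mo{M}^{\coh}$ together with the truth-preservation of Proposition~\ref{prop:coherent}. Assuming $\Gamma \Vdash^{\coh} \phi$, I would take an arbitrary intuitionistic neighbourhood model $\mo{M} = (W, \leq, N, V)$ and a world $w \in W$ with $\mo{M}, w \Vdash \Gamma$, aiming to conclude $\mo{M}, w \Vdash \phi$. The key objects are the model $\mo{M}^{\coh}$, which the preceding lemma guarantees to be a coherent intuitionistic neighbourhood model, and a lift $(w, 0) \in W^{\coh}$ of $w$ (such a point exists regardless of whether $w$ is maximal).

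The chain of implications then runs as follows. By Proposition~\ref{prop:coherent}, truth at $(w, 0)$ in $\mo{M}^{\coh}$ coincides with truth at $w$ in $\mo{M}$ for every formula, so $\mo{M}, w \Vdash \Gamma$ gives $\mo{M}^{\coh}, (w, 0) \Vdash \psi$ for each $\psi \in \Gamma$, i.e.\ $\mo{M}^{\coh}, (w, 0) \Vdash \Gamma$. Since $\mo{M}^{\coh}$ is coherent and $\Gamma \Vdash^{\coh} \phi$, the model $\mo{M}^{\coh}$ validates $\Gamma \vdash \phi$, so $\mo{M}^{\coh}, (w, 0) \Vdash \phi$. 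Applying Proposition~\ref{prop:coherent} a second time returns $\mo{M}, w \Vdash \phi$. As $\mo{M}$ and $w$ were arbitrary, this establishes $\Gamma \Vdash^{\inm} \phi$.

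Because the substantive content---that $\mo{M}^{\coh}$ is coherent and that it preserves the truth of all $\Lbd$-formulas---has already been discharged in the lemma and in Proposition~\ref{prop:coherent}, I do not expect a genuine obstacle here; the theorem is essentially the assembly of these two facts. The only point deserving a moment of care is confirming that a lift of $w$ into $W^{\coh}$ always exists, which is immediate from the definition of $W^{\coh}$.
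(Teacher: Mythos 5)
Your proposal is correct and takes essentially the same route as the paper: the left-to-right direction by restriction to the subclass of coherent models, and the right-to-left direction by passing to $\mo{M}^{\coh}$, transferring $\Gamma$ and $\phi$ between $w$ and $(w,0)$ via Proposition~\ref{prop:coherent}, and using coherence of $\mo{M}^{\coh}$ to invoke $\Gamma \Vdash^{\coh} \phi$. The paper's proof states this in one line; yours simply spells out the same argument, including the (correct) observation that the lift $(w,0)$ exists whether or not $w$ is maximal.
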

\begin{proof}
  The direction from left to right follows from the fact that every
  coherent intuitionistic neighbourhood model is in particular an intuitionistic
  neighbourhood model. The direction from right to left follows from
  Proposition~\ref{prop:coherent}.
\end{proof}

\begin{remark}
  An interesting situation arises when we restrict $N$ in an intuitionistic
  neighbourhood frame $(W, \leq N)$ to having a single neighbourhood $a$.
  If $\dom(a) = W$, then the frame corresponds to a frame for Wijesekera's
  constructive logic $\log{WK}$~\cite{Wij90,WijNer05}, and the interpretation
  of the modal operators corresponds to that for $\log{WK}$.
  However, if we additionally assume that $a$ is coherent, then we obtain a frame or model
  for $\IK$~\cite{Sim94} (see also Appendix~\ref{app:IK}).
  So while the coherence condition does not entail any additional valid
  consecutions when working with intuitionistic neighbourhood semantics,
  it does when we restrict to a setting with a single neighbourhood,
  because we know that $\IK$ proves strictly more theorems than $\log{WK}$.
\end{remark}

\subsection{Cartesian intuitionistic neighbourhood models}\label{subsec:cartesian}

\newcommand{\lsim}{\ensuremath{\mathbin{\raisebox{.9\depth}{\scalebox{1}[-1]{$\lesssim$}}}}}
\newcommand{\eqR}{\overline{R}}

\newcommand{\qel}{\ensuremath{\mathbin{\raisebox{.9\depth}{\scalebox{1}[-1]{$\leqslant$}}}}}
\newcommand{\qeg}{\ensuremath{\mathbin{\raisebox{.9\depth}{\scalebox{1}[-1]{$\geqslant$}}}}}
\newcommand{\ov}{\overline}

  We have seen that every first-order structure for
  $\IFOM$ gives rise to an intuitionistic neighbourhood model.
  In this section we characterise precisely the intuitionistic neighbourhood
  models arising from an $\IFOM$-structure (up to isomorphism).
  This gives a condition on the underlying frames.
  In analogy to the normal modal case (see~\cite[Section~3]{Fis81}
  and~\cite[Section~8.1.1]{Sim94}),
  we call such intuitionistic neighbourhood frames Cartesian.

\begin{definition}
  Let $\mo{M} = (W, \leq, N, V)$ be an intuitionistic neighbourhood model.
  \begin{enumerate}
    \item Define a binary relation $R$ on $W$ by letting $wRv$ if there exists a
          neighbourhood $a$ such that $v \in a(w)$.
          We write $R^{\sim}$ for the equivalence closure of $R$.
          Further, we write $\leq^{\sim}$ for the equivalence closure of $\leq$.
    \item $\mo{M}$ (or its underlying frame) is called
          \emph{$R^{\sim}$-Cartesian} if for all $w, v \in W$,
          $w \leq^{\sim} v R^{\sim} w$ implies $w = v$.
    \item $\mo{M}$ (or its underlying frame) is called
          \emph{N-Cartesian} or \emph{neighbourhood Cartesian}
          if $w R^{\sim} v$ and $w, v \in \dom(a)$ implies $a(w) = a(v)$,
          for all $w, v \in W$ and $a \in N$.
    \item We say that $\mo{M}$ is \emph{Cartesian} if it is both
          $R^{\sim}$-Cartesian and N-Cartesian.
  \end{enumerate}
\end{definition}

  We derive some basic properties of $R^{\sim}$ and of Cartesian models.

\begin{lemma}\label{lem:properties}
  Let $\mo{M} = (W, \leq, N, V)$ be a coherent and Cartesian
  intuitionistic neighbourhood model.
  \begin{enumerate}
    \item \label{it:properties-1}
          If $w R^{\sim} v \leq u$ then there exists an $s \in W$ such that
          $w \leq s R^{\sim} u$.
    \item \label{it:properties-2}
          If $\mo{M}$ is Cartesian, then $w R^{\sim}v \leq^{\sim} u$
          and $w R^{\sim}v' \leq^{\sim} u$ implies $v = v'$.
  \end{enumerate}
\end{lemma}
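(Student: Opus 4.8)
The plan is to prove both items by unwinding the definitions of $R^{\sim}$ and the coherence conditions~\eqref{it:in-2} and~\eqref{it:in-3}, exploiting the fact that the equivalence closure $R^{\sim}$ is generated by single $R$-steps in either direction. For item~\eqref{it:properties-1}, I would first reduce to the case of a single $R$-step. Since $R^{\sim}$ is the equivalence closure of $R$, it suffices to handle the generating cases $wRv$ and $vRw$ and then chain the conclusions along a finite $R^{\sim}$-path by induction on its length. For a single step $wRv$ (so $v \in a(w)$ for some neighbourhood $a$) together with $v \leq u$, coherence condition~\eqref{it:in-3} gives a world $w'$ with $w \leq w'$ and $u \in a(w')$; setting $s = w'$ we get $w \leq s$ and $u \in a(s)$, hence $s R^{\sim} u$ (indeed $sRu$). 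For the reverse generating step $vRw$ (so $w \in a(v)$) with $v \leq u$, condition~\eqref{it:in-2} gives a $v'$ with $v' \in a(u)$ and $w \leq v'$; so taking $s = v'$ yields $w \leq s$ and $u R s$, whence $w \leq s R^{\sim} u$. The general case follows by threading these two local moves along the finite generating sequence witnessing $w R^{\sim} v$.

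For item~\eqref{it:properties-2}, the hypothesis already assumes $\mo{M}$ Cartesian, so I would use $R^{\sim}$-Cartesianness as the main lever. The goal is to show that if $w R^{\sim} v \leq^{\sim} u$ and $w R^{\sim} v' \leq^{\sim} u$ then $v = v'$. The first step is to combine the two chains: from $v \leq^{\sim} u$ and $v' \leq^{\sim} u$ we obtain $v \leq^{\sim} u \leq^{\sim} v'$ (using symmetry of $\leq^{\sim}$), so $v \leq^{\sim} v'$; and from $w R^{\sim} v$ and $w R^{\sim} v'$ we get $v R^{\sim} w R^{\sim} v'$, so $v R^{\sim} v'$. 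Now I have both $v \leq^{\sim} v'$ and $v R^{\sim} v'$, and the plan is to feed the pair $(v, v')$ into the $R^{\sim}$-Cartesian condition. The definition states that $w \leq^{\sim} v R^{\sim} w$ forces $w = v$; by symmetry of both equivalence relations one gets that $v \leq^{\sim} v'$ together with $v' R^{\sim} v$ forces $v = v'$, which is exactly what the combined chains provide (reading $v' R^{\sim} v$ from $v R^{\sim} v'$ by symmetry). Thus $v = v'$.

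I expect the main obstacle to be item~\eqref{it:properties-1}, specifically getting the \emph{direction} of the coherence conditions to line up with the direction of each $R$-step and handling the mixed forward/backward $R^{\sim}$-path cleanly. The subtlety is that $R^{\sim}$ is not $R$ itself: a single link in an $R^{\sim}$-chain may be an $R$-edge traversed either forwards ($wRv$, needing~\eqref{it:in-3}) or backwards ($vRw$, needing~\eqref{it:in-2}), and the inductive bookkeeping must propagate the auxiliary world $s$ correctly at each step so that the running invariant ``$w \leq s$ and $s R^{\sim} u$'' is maintained. Coherence is precisely what makes each local move possible, which is why the lemma is stated for coherent models; the care needed is ensuring the induction hypothesis is phrased strongly enough to absorb both edge orientations. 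Item~\eqref{it:properties-2}, by contrast, is essentially a symmetry-and-transitivity manipulation feeding into the Cartesian hypothesis and should go through directly once the chains are merged.
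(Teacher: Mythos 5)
Your proposal is correct and follows essentially the same route as the paper: item~\eqref{it:properties-1} in the paper is proved exactly by repeated application of the coherence conditions~\eqref{it:in-2} and~\eqref{it:in-3}, which your case split on the orientation of each $R$-edge and induction along the $R^{\sim}$-chain spells out in detail (with the conditions correctly matched to the two orientations), and your argument for item~\eqref{it:properties-2} is the paper's verbatim: merge the chains into $v \leq^{\sim} v' R^{\sim} v$ and apply the $R^{\sim}$-Cartesian property.
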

\begin{proof}
  The first item follows from repeated application of~\eqref{it:in-2} and~\eqref{it:in-3}.
  For the second item, suppose $w R^{\sim} v \leq^{\sim} u$
  and $w R^{\sim} v' \leq^{\sim} u$.
  Then we have $v \leq^{\sim} v' R^{\sim} v$, hence if $\mo{M}$ is
  Cartesian we find $v = v'$.
\end{proof}

\begin{figure}[h!]
  \centering
  \begin{tikzpicture}[scale=.75]
      \node (w) at (0,0) {$w$};
      \node (v) at (0,1.5) {$v$};
      \node (ww) at (2,1.5) {$w$};
      \draw[latex-latex, bend left=30] (w) to (v);
      \draw[Circle-Circle] (v) to (ww);
      \draw[dashed, double, bend right=29] (w) to (v);
      \node at (1,-1) {$R^{\sim}$-Cartesian};
      \node (w) at (5,0) {$w$};
      \node (v) at (7,0) {$v$};
      \node (s) at (5,1.5) {$s$};
      \node (u) at (7,1.5) {$u$};
      \draw[Circle-Circle] (w) to (v);
      \draw[-latex] (v) to (u);
      \draw[dashed, -latex] (w) to (s);
      \draw[dashed, Circle-Circle] (s) to (u);
      \node at (6,-1) {Lemma~\ref{lem:properties}\eqref{it:properties-1}};
      \node (w) at (10,.5) {$w$};
      \node (v) at (12,.75) {$v$};
      \node (vp) at (13,0) {$v'$};
      \node (u) at (12.5,2.125) {$u$};
      \draw[Circle-Circle, bend left=5] (w) to (v);
      \draw[Circle-Circle, bend right=10] (w) to (vp);
      \draw[latex-latex, bend left=10] (v) to (u);
      \draw[latex-latex, bend right=10] (vp) to (u);
      \draw[dashed, double] (v) to (vp);
      \node at (11.5, -1) {Lemma~\ref{lem:properties}\eqref{it:properties-2}};
  \end{tikzpicture}
  \caption{Depictions of the $R^{\sim}$-Cartesian property and Lemma~\ref{lem:properties}.
           In this diagram, and other diagrams in this section, we draw
           $\smash{\protect\tikz[baseline=-.8mm]
            {\protect\node (w) at (0,0) {$w$};
             \protect\node (v) at (1.25,0) {$v$};
             \protect\draw[Circle-Circle] (w) 
                to node[above,yshift=-1pt]{\footnotesize{$a$}}(v);}}$%
             if $wR^{\sim}v$, and
           $\smash{\protect\tikz[baseline=-.8mm]
            {\protect\node (w) at (0,0) {$w$};
             \protect\node (v) at (1.25,0) {$v$};
             \protect\draw[latex-latex] (w) to (v);}}$%
           if $w \leq^{\sim} v$.}
\end{figure}

  We already observed that intuitionistic neighbourhood models coming
  from $\IFOM$-structures are coherent. We can easily verify that they
  are also Cartesian.

\begin{lemma}\label{lem:Cartesian}
  Let $\fomo{M}$ be an $\IFOM$-structure.
  Then $\fomo{M}^{\bul}$ is a Cartesian intuitionistic neighbourhood model.
\end{lemma}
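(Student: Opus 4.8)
The plan is to exploit a clean coordinate separation in $\fomo{M}^{\bul}$: the neighbourhood relation $R$ never changes the first coordinate of a point, whereas $\leqq$ never changes the second coordinate. Once this is made precise, both halves of the Cartesian condition fall out with essentially no computation. Note that coherence of $\fomo{M}^{\bul}$, observed earlier, plays no role in this particular argument.

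First I would record the two coordinate observations. For any $a^{\bul} \in N^{\bul}$ and any $\langle w, x \rangle \in \dom(a^{\bul})$, Definition~\ref{def:IFOM-to-inm} gives $a^{\bul}(\langle w, x \rangle) = \{ \langle w, y \rangle \mid (a, y) \in \I(\E, w) \}$, so every element of this set has first coordinate $w$. Hence $\langle w, x \rangle R \langle w', x' \rangle$ forces $w = w'$. Since ``having equal first coordinate'' is itself reflexive, symmetric and transitive and contains $R$, it contains the equivalence closure $R^{\sim}$, so $\langle w, x \rangle R^{\sim} \langle w', x' \rangle$ also implies $w = w'$. Dually, the definition of $\leqq$ immediately gives that $\langle w, x \rangle \leqq \langle w', x' \rangle$ implies $x = x'$, and the same closure argument shows that $\langle w, x \rangle \leq^{\sim} \langle w', x' \rangle$ implies $x = x'$.

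Then I would verify the two conditions. For $R^{\sim}$-Cartesianness, suppose $\langle w, x \rangle \leq^{\sim} \langle w', x' \rangle R^{\sim} \langle w, x \rangle$; the first relation yields $x = x'$ and the second yields $w' = w$, so the two points coincide. For $N$-Cartesianness, suppose $p R^{\sim} q$ with $p, q \in \dom(a^{\bul})$; writing $p = \langle w, x \rangle$, the first observation gives $q = \langle w, x' \rangle$ for some $x'$, but the value $a^{\bul}(\langle w, x \rangle)$ depends only on $w$ and $a$ and not on the second coordinate, so $a^{\bul}(p) = a^{\bul}(q)$. Combining the two, $\fomo{M}^{\bul}$ is Cartesian.

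The only mild obstacle is the closure step, namely observing that a property which is already an equivalence relation and holds of every $R$-pair (respectively every $\leqq$-pair) automatically holds of every $R^{\sim}$-pair (respectively $\leq^{\sim}$-pair). Everything else is a direct unwinding of Definition~\ref{def:IFOM-to-inm}.
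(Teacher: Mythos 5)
Your proof is correct and follows essentially the same route as the paper's: both rest on the observation that $\leqq$ preserves the second coordinate while the neighbourhood relation preserves the first, extend this to the equivalence closures, and then read off both Cartesian conditions (with N-Cartesianness following because $a^{\bul}(\langle w, x\rangle)$ depends only on $w$ and $a$). Your explicit justification of the closure step—that any equivalence relation containing $R$ contains $R^{\sim}$—is a welcome clarification of what the paper passes over with ``as a consequence.''
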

\begin{proof}
  Let $\langle w, x \rangle$ and $\langle v, y \rangle$ be elements
  of $\fomo{M}^{\bul}$.
  By definition, $\langle w, x \rangle \leqq \langle v, y \rangle$
  implies $x = y$. As a consequence,
  $\langle w, x \rangle \leqq^{\sim} \langle v, y \rangle$ also implies $x = y$.
  On the other hand, if $a \in \I(\n, w)$ then
  $\langle v, y \rangle \in a^{\bul}(\langle w, x \rangle)$ implies $w = v$.
  This entails that
  $\langle w, x \rangle R^{\sim} \langle v, y \rangle$ implies $w = v$.

  Combining the two observations above immediately gives that
  $\langle w, x \rangle \leqq^{\sim} \langle v, y \rangle R^{\sim} \langle w, x \rangle$
  implies $w = v$ and $x = y$, so that $\langle w, x \rangle = \langle v, y \rangle$
  and $\fomo{M}^{\bul}$ is $R^{\sim}$-Cartesian.
  Second, suppose $\langle w, x \rangle R^{\sim} \langle v, y \rangle$.
  Then $w = v$, so we assume $\langle w, x \rangle R^{\sim} \langle w, y \rangle$.
  If both $\langle w, x \rangle \in \dom(a^{\bul})$
  and $\langle w, y \rangle \in \dom(a^{\bul})$,
  then we have $(x, a) \in \I(\N, w)$ and $(y, a) \in \I(\N, w)$.
  But then it follows from the definition of $a^{\bul}$ that
  $a^{\bul}(\langle w, x \rangle) = a^{\bul}(\langle w, y \rangle)$,
  so $\fomo{M}^{\bul}$ is N-Cartesian.
\end{proof}

  The remainder of this section focusses on showing that every
  coherent and Cartesian intuitionistic neighbourhood model is isomorphic to an
  intuitionistic neighbourhood model of the form $\fomo{M}^{\bul}$,
  for some $\IFOM$-structure $\fomo{M}$. To this end, we fix a
  coherent and Cartesian intuitionistic neighbourhood model $\mo{M} = (W, \leq, N, V)$.
  Denote the equivalence class of $w \in W$ with respect to $R^{\sim}$
  by $\bar{w}$, and its equivalence class with respect to $\leq^{\sim}$
  by $\tilde{w}$.
  
\begin{definition}
  Let $N_{triv} = \{ a \in N \mid \dom(a) = \emptyset \}$ be the collection
  of intuitionistic neighbourhoods of $\mo{M}$ whose domain is empty.
  Define
  \begin{align*}
    \ov{W} &= W/{R^{\sim}} = \{ \bar{w} \mid w \in W \} &
    \I(\s, \bar{w}) &= \{ \tilde{v} \mid w R^{\sim} w' \leq^{\sim} v \text{ for some } w' \} \\
    && \I(\n, \bar{w}) &= \{ a \in N \mid w R^{\sim} x \text{ and } x \in \dom(a) \text{ for some } x \in W \} \cup N_{triv}
  \end{align*}
  Define a relation ${\qel} \subseteq \ov{W} \times \ov{W}$, and
  interpretations of the predicates by:
  \begin{align*}
    \bar{w} \qel \bar{w}' &\iff \text{there exists $v'$ such that } w \leq v' R^{\sim} w' \\
    (\tilde{x}, a) \in \I(\N, \bar{u})
      &\iff \text{for all $w$ we have: }
            u R^{\sim} w \leq^{\sim} x \text{ implies } w \in \dom(a) \\
    (a, \tilde{x}) \in \I(\E, \bar{u})
      &\iff \text{there exist } y, z
            \text{ such that } u R^{\sim} y
            \text{ and } z \in a(y) \text{ and } z \leq^{\sim} x \\
    \tilde{x} \in \I(\P_i, \bar{u})
      &\iff w R^{\sim} y \leq^{\sim} x \text{ implies } y \in V(p_i)
  \end{align*}
  Lastly, let $\mo{M}^{\circ} := (\ov{W}, \qel, \I)$.
\end{definition}

  We verify that this definition gives rise to a $\IFOM$-structure.

\begin{lemma}
  The tuple
  $\mo{M}^{\circ} = (\ov{W}, \qel, \I)$
  is an $\IFOM$-structure.
\end{lemma}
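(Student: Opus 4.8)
The plan is to verify directly that $\mo{M}^{\circ} = (\ov{W}, \qel, \I)$ satisfies the definition of an $\IFOM$-structure. This means checking four things: that $\qel$ is a partial order on $\ov{W}$; that each $\I(\s, \bar{w})$ and $\I(\n, \bar{w})$ is a well-defined set; that the interpretations $\I(\N, \bar{u})$, $\I(\E, \bar{u})$, $\I(\P_i, \bar{u})$ are well-defined subsets of the appropriate products; and crucially that all of these are \emph{persistent}, i.e.\ monotone along $\qel$, so that $\bar{w} \qel \bar{w}'$ implies $\I(\mathsf{X}, \bar{w}) \subseteq \I(\mathsf{X}, \bar{w}')$ for each predicate symbol $\mathsf{X}$.

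First I would check that $\qel$ is a partial order. Reflexivity and transitivity should follow by unwinding the definition and using that $R^{\sim}$ and $\leq^{\sim}$ are equivalence relations together with Lemma~\ref{lem:properties}\eqref{it:properties-1}, which lets one commute an $R^{\sim}$-step past a $\leq$-step. Antisymmetry is where the Cartesian hypothesis enters: if $\bar{w} \qel \bar{w}'$ and $\bar{w}' \qel \bar{w}$, then combining the two witnessing chains yields a configuration of the form $w \leq^{\sim} v R^{\sim} w$, and $R^{\sim}$-Cartesianness forces $w = v$, whence $\bar{w} = \bar{w}'$. Well-definedness of the domain sets $\I(\s,\bar w)$ and $\I(\n,\bar w)$ (independence of the chosen representative $w$ of $\bar w$) is immediate since the defining conditions only refer to $w$ through $R^{\sim}$.

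Next I would verify persistence of each predicate. For $\I(\N, \bar u)$ and $\I(\E,\bar u)$ this amounts to showing that enlarging $\bar u$ along $\qel$ preserves membership; here the coherence conditions~\eqref{it:in-2} and~\eqref{it:in-3} and Lemma~\ref{lem:properties} do the work of transporting the witnesses $y, z$ along the accessibility relation. I expect the interpretations to be invariant under the choice of representatives precisely because $R^{\sim}$ and $\leq^{\sim}$ are equivalence relations and because N-Cartesianness guarantees that $a(w)$ depends only on the $R^{\sim}$-class of $w$. \emph{The main obstacle} is likely to be the monotonicity of $\I(\N,\bar u)$: its defining condition is universally quantified (``for all $w$ with $u R^{\sim} w \leq^{\sim} x$, $w \in \dom(a)$''), so enlarging $\bar u$ could introduce new witnesses $w$ that must still land in $\dom(a)$, and one has to argue—using Lemma~\ref{lem:properties}\eqref{it:properties-2} to pin down that any such new $w$ is forced to coincide with an old one up to the relevant equivalence, together with the fact that $\dom(a)$ is an upset—that no fresh obligation can fail. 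The remaining checks are routine bookkeeping once this case is handled.
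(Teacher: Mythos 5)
Your proposal is correct and follows essentially the same route as the paper: check that $\qel$ is a partial order (with the $R^{\sim}$-Cartesian property supplying antisymmetry), then verify persistence of each interpretation along $\qel$ using Lemma~\ref{lem:properties}\eqref{it:properties-1} to transport witnesses, the coherence conditions for $\I(\E,-)$, and Lemma~\ref{lem:properties}\eqref{it:properties-2} together with upward-closure of $\dom(a)$ to discharge the universally quantified condition defining $\I(\N,-)$ — exactly the point the paper's proof also treats as the delicate case.
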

\begin{proof}
  \textit{$\qel$ is a partial order on $\ov{W}$.}
    The relation $\qel$ is reflexive because $\leq$ and $R^{\sim}$ are.
    For transitivity, suppose $\bar{w} \qel \bar{v} \qel \bar{u}$.
    Then there are $v', u'$ such that $w \leq v' R^{\sim} v \leq u' R^{\sim} u$.
    Now we can use~Lemma~\ref{lem:properties}\eqref{it:properties-1}
    to find $u''$ such that $v' \leq u' R^{\sim} u'$.
    This implies $w \leq u'' R^{\sim} u$, so that $\bar{w} \qel \bar{u}$.
    In a picture:
    \begin{equation*}
      \begin{tikzpicture}[scale=.75]
          \node (w)   at (0,0)   {$w$};
          \node (vp)  at (0,1.5) {$v'$};
          \node (v)   at (2,1.5) {$v$};
          \node (up)  at (2,3)   {$u'$};
          \node (u)   at (4,3)   {$u$};
          \node (upp) at (0,3)   {$u''$};
          \draw[-latex]         (w)   to (vp);
          \draw[Circle-Circle]  (vp)  to (v);
          \draw[-latex]         (v)   to (up);
          \draw[Circle-Circle]  (up)  to (u);
          \draw[dashed, -latex] (vp)  to (upp);
          \draw[dashed,Circle-Circle]  (upp) to (up);
      \end{tikzpicture}
    \end{equation*}
    For antisymmetry, suppose $\bar{w} \qel \bar{v} \qel \bar{w}$.
    Then there exist $v'$ and $w'$ such that $w \leq v' R^{\sim} v \leq w' R^{\sim} w$.
    Using~Lemma~\ref{lem:properties}\eqref{it:properties-1}
    again we get $w''$ such that $v' \leq w'' R^{\sim} w'$.
    But this means that $w \leq w'' R^{\sim} w$, so we can use the Cartesian
    property of $\mo{M}$ to obtain $w'' = w$.
    We then get $v' = w$ because $\leq$ is antisymmetric.
    This implies $w R^{\sim} v$, hence $\bar{w} = \bar{v}$.
    
  \medskip\noindent
  \textit{The state domains are increasing.}
    Suppose $\bar{w} \qel \bar{v}$ and $\tilde{u} \in \I(\s, \bar{w})$.
    Then there exist $v', s \in W$ such that $w \leq v' R^{\sim} v$
    and $w R^{\sim} s \leq^{\sim} u$. We can use 
    Lemma~\ref{lem:properties}\eqref{it:properties-1} to find
    some $t$ such that $s \leq t$ and $v' R^{\sim} t$. Pictorially:
    \begin{equation*}
      \begin{tikzpicture}[scale=.75]
          \node (w) at (0,0) {$w$};
          \node (vp) at (0,1.5) {$v'$};
          \node (v) at (2,1.5) {$v$};
          \draw[-latex] (w) to (vp);
          \draw[Circle-Circle] (vp) to (v);
          \node (w) at (4,0) {$w$};
          \node (s) at (6,0) {$s$};
          \node (u) at (6,1.5) {$u$};
          \draw[Circle-Circle] (w) to (s);
          \draw[latex-latex] (s) to (u);
          \node (imp) at (8,.75) {\Large{$\rightsquigarrow$}};
          \node (v)  at (9 ,2.25) {$v$};
          \node (w)  at (11, .75) {$w$};
          \node (vp) at (11,2.25) {$v'$};
          \node (s)  at (13, .72) {$s$};
          \node (t)  at (13,2.25) {$t$};
          \node (u)  at (13,-.75) {$u$};
          \draw[-latex] (w) to (vp);
          \draw[Circle-Circle] (w) to (s);
          \draw[dashed,-latex] (s) to (t);
          \draw[dashed, Circle-Circle] (vp) to (t);
          \draw[Circle-Circle] (v) to (vp);
          \draw[latex-latex] (s) to (u);
      \end{tikzpicture}
    \end{equation*}
    Combining these we get $v R^{\sim} w' R^{\sim} t$
    and $t \leq^{\sim} s \leq^{\sim} u$.
    Therefore $v R^{\sim} t \leq^{\sim} u$, so that $\tilde{u} \in \I(\s, \bar{v})$.
  
  \medskip\noindent
  \textit{The neighbourhood domains are increasing.}
    Suppose $\bar{w} \qel \bar{v}$ and $a \in \I(\n, \bar{w})$.
    If $a \in N_{triv}$ then $a \in \I(\n, \bar{v})$ by definition.
    If not, then there exist $v'$ and $x$ such that $w \leq v' R^{\sim} v$
    and $w R^{\sim} x$ and $x \in \dom(a)$.
    Using Lemma~\ref{lem:properties}\eqref{it:properties-1} we can find
    some $x' \geq x$ such that $v' R^{\sim} x'$, hence $v R^{\sim} x'$.
    Since $x \leq x'$ we have $x' \in \dom(a)$, hence $a \in \I(\n, \bar{v})$.
    \begin{equation*}
      \begin{tikzpicture}[scale=.75]
          \node (w) at (0,0) {$w$};
          \node (vp) at (0,1.5) {$v'$};
          \node (v) at (2,1.5) {$v$};
          \draw[-latex] (w) to (vp);
          \draw[Circle-Circle] (vp) to (v);
          \node (w) at (4,0) {$w$};
          \node (s) at (6,0) {$x$};
          \draw[Circle-Circle] (w) to (s);
          \draw (s) node[right]{$\; \in \dom(a)$};
          \node (imp) at (10,.75) {\Large{$\rightsquigarrow$}};
          \node (v)  at (12,1.5) {$v$};
          \node (w)  at (14,0) {$w$};
          \node (vp) at (14,1.5) {$v'$};
          \node (x)  at (16,0) {$x$};
          \node (xp) at (16,1.5) {$x'$};
          \draw[-latex] (w) to (vp);
          \draw[Circle-Circle] (w) to (x);
          \draw[dashed,-latex] (x) to (xp);
          \draw[dashed, Circle-Circle] (vp) to (xp);
          \draw[Circle-Circle] (v) to (vp);
          \draw (x) node[right]{$\; \in \dom(a)$};
          \draw (xp) node[right]{$\; \in \dom(a)$};
      \end{tikzpicture}
    \end{equation*}
    
  \medskip\noindent
  \textit{The interpretation of $\N$ is increasing.}
    Suppose $\bar{w} \qel \bar{v}$, $\tilde{x} \in \I(\s, \bar{w})$,
    $a \in \I(\n, \bar{w})$ and $(\tilde{x}, a) \in \mc{I}(\N, \bar{w})$.
    Then there exist $v', s$ such that $w \leq v' R^{\sim} v$
    and $w R^{\sim} s \leq^{\sim} x$, and because $(\tilde{x}, a) \in \I(\N, \bar{w})$
    we als have $t \in \dom(a)$
    for all $t$ such that $w R^{\sim} t \leq^{\sim} x$.
    So $s \in \dom(a)$. Given $w \leq v' R^{\sim} v$ and $w R^{\sim} s$,
    we can use Lemma~\ref{lem:properties}\eqref{it:properties-1} to find $s'$ such that
    $s \leq s'$ and $v R^{\sim} s'$. So $s' \in \dom(a)$.
    By Lemma~\ref{lem:properties}\eqref{it:properties-2}, for any world $t'$
    such that $v R^{\sim} t' \leq^{\sim} x$ we have $t' = s'$.
    This proves that $(\tilde{x}, a) \in \mc{I}(\N, \bar{v})$.
    In a picture:
    \begin{equation*}
      \begin{tikzpicture}[scale=.75]
          \node (w) at (0,0) {$w$};
          \node (vp) at (0,1.5) {$v'$};
          \node (v) at (2,1.5) {$v$};
          \draw[-latex] (w) to (vp);
          \draw[Circle-Circle] (vp) to (v);
          \node (w) at (4,0) {$w$};
          \node (s) at (6,0) {$y$};
          \node (x) at (6,1.5) {$x$};
          \draw[Circle-Circle] (w) to (s);
          \draw[latex-latex] (s) to (x);
          \draw (s) node[right]{$\; \in \dom(a)$};
          \node (imp) at (10,.75) {\Large{$\rightsquigarrow$}};
          \node (v)  at (11,2.25) {$v$};
          \node (w)  at (13, .75) {$w$};
          \node (vp) at (13,2.25) {$v'$};
          \node (s)  at (15, .75) {$s$};
          \node (sp) at (15,2.25) {$s'$};
          \node (x)  at (15,-.75) {$x$};
          \draw[-latex] (w) to (vp);
          \draw[Circle-Circle] (vp) to (v);
          \draw[Circle-Circle] (w) to (s);
          \draw[dashed, Circle-Circle] (vp) to (sp);
          \draw[dashed, -latex] (s) to (sp);
          \draw[latex-latex] (s) to (x);
          \draw (s) node[right]{$\; \in \dom(a)$};
          \draw (sp) node[right]{$\; \in \dom(a)$};
      \end{tikzpicture}
    \end{equation*}
    
  \medskip\noindent
  \textit{The interpretation of $\E$ is increasing.}
    Suppose $\bar{w} \qel \bar{v}$, $\tilde{x} \in \I(\s, \bar{w})$,
    $a \in \I(\n, \bar{w})$ and $(a, \tilde{x}) \in \mc{I}(\E, \bar{w})$.
    Then there exist $y, z$ such that $w R^{\sim} y$, $z \in a(y)$
    and $z \leq^{\sim} x$. Using Lemma~\ref{lem:properties}\eqref{it:properties-1}
    and~\eqref{it:in-2} we can find $y', z'$ such that
    $v' R^{\sim} y'$, $z' \in a(y')$ and $y \leq y'$ and $z \leq z'$.
    This implies that $v R^{\sim} y'$ and $z' \leq^{\sim} x$,
    so that we have $(a, \tilde{x}) \in \I(\E, \bar{v})$.
    Pictorially:
    \begin{equation*}
      \begin{tikzpicture}[scale=.75]
          \node (w) at (0,0) {$w$};
          \node (vp) at (0,1.5) {$v'$};
          \node (v) at (2,1.5) {$v$};
          \draw[-latex] (w) to (vp);
          \draw[Circle-Circle] (vp) to (v);
          \node (w) at (4,0) {$w$};
          \node (y) at (6,0) {$y$};
          \node (z) at (8,0) {$z$};
          \node (x) at (8,1.5) {$x$};
          \draw[Circle-Circle] (w) to (y);
          \draw[-Circle] (y) to node[above]{\footnotesize{$a$}} (z);
          \draw[latex-latex] (z) to (x);
          \node (imp) at (10,.75) {\Large{$\rightsquigarrow$}};
          \node (v)  at (11,2.25) {$v$};
          \node (w)  at (13, .75) {$w$};
          \node (vp) at (13,2.25) {$v'$};
          \node (y)  at (15, .75) {$y$};
          \node (yp) at (15,2.25) {$y'$};
          \node (z)  at (17,.75)  {$z$};
          \node (zp) at (17,2.25) {$z'$};
          \node (x)  at (17,-.75) {$x$};
          \draw[-latex] (w) to (vp);
          \draw[Circle-Circle] (vp) to (v);
          \draw[Circle-Circle] (w) to (y);
          \draw[-Circle] (y) to node[above]{\footnotesize{$a$}} (z);
          \draw[dashed, Circle-Circle] (vp) to (yp);
          \draw[dashed, -latex] (y) to (yp);
          \draw[dashed, -Circle] (yp) to node[above]{\footnotesize{$a$}} (zp);
          \draw[dashed, -latex] (z) to (zp);
          \draw[latex-latex] (z) to (x);
      \end{tikzpicture}
    \end{equation*}

  \medskip\noindent
  \textit{The interpretation of the $\P_i$ is increasing.}
    Suppose $\bar{w} \qel \bar{v}$ and $\tilde{x} \in \I(\s, \bar{w})$
    and $\tilde{x} \in \I(\P_i, \bar{w})$.
    Let $s$ be such that $w R^{\sim} s \leq^{\sim} x$.
    Then $s \in V(p_i)$. Similar to above we find some $s'$ such that
    $v R^{\sim} s' \leq^{\sim} x$ and $s \leq s'$. This implies $s' \in V(p)$,
    so that (since such an $s'$ between $v$ and $x$ is unique)
    we have $\tilde{x} \in \I(\P_i, \bar{v})$.
\end{proof}

\begin{proposition}\label{prop:Cartesian}
  Let $\mo{M} = (W, \leq, N, V)$ be a coherent and Cartesian intuitionistic
  neighbourhood model
  and $(\mo{M}^{\circ})^{\bul} = (\ov{W}^{\bul}, \leqq, N^{\bul}, V^{\bul})$
  the intuitionistic neighbourhood model induced by $\mo{M}^{\circ}$.
  Then $\mo{M} \cong (\mo{M}^{\circ})^{\bul}$.
\end{proposition}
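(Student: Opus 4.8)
The plan is to exhibit an explicit isomorphism $(\alpha, \nu) : \mo{M} \to (\mo{M}^{\circ})^{\bul}$ and verify conditions $(\mathrm{I}_1)$--$(\mathrm{I}_4)$. On worlds I would set $\alpha(w) = \langle \bar{w}, \tilde{w} \rangle$; this lands in $\ov{W}^{\bul}$ because $w R^{\sim} w \leq^{\sim} w$ witnesses $\tilde{w} \in \I(\s, \bar{w})$. On neighbourhoods I would set $\nu(a) = a^{\bul}$, the neighbourhood attached to $a$ via Definition~\ref{def:IFOM-to-inm}. Since $\I(\n, \bar{w}) \subseteq N$ for every $\bar{w}$, and every $a \in N$ lies in some $\I(\n, \bar{w})$ (take any $x \in \dom(a)$, noting $N_{triv}$ sits in all of them), the map $\nu$ is a well-defined surjection onto $N^{\bul}$; here it is worth recording that $N_{triv}$ is at most a singleton, as the empty partial function is unique. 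I would not prove injectivity of $\nu$ directly but extract it afterwards from $(\mathrm{I}_2)$ and $(\mathrm{I}_3)$, which say precisely that $\nu(a)$ determines $\dom(a)$ and each value $a(u)$ through the bijection $\alpha$.

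Surjectivity of $\alpha$ is immediate: an element $\langle \bar{w}, \tilde{v}\rangle$ of $\ov{W}^{\bul}$ satisfies $w R^{\sim} w' \leq^{\sim} v$ for some $w'$, and then $\bar{w'} = \bar w$ and $\tilde{w'} = \tilde v$ give $\alpha(w') = \langle \bar{w}, \tilde{v}\rangle$. Injectivity of $\alpha$, together with the backward directions of $(\mathrm{I}_1)$ and $(\mathrm{I}_4)$, all reduce to a single use of the $R^{\sim}$-Cartesian property: a configuration $p \leq^{\sim} q R^{\sim} p$ forces $p = q$. For instance $\alpha(w) = \alpha(v)$ gives $w R^{\sim} v$ and $w \leq^{\sim} v$, hence $w = v$; and the backward direction of $(\mathrm{I}_1)$, unfolding $\bar{w} \qel \bar{v}$ to $w \leq v' R^{\sim} v$ together with $\tilde{w} = \tilde{v}$, yields $v \leq^{\sim} v' R^{\sim} v$, so $v = v'$ and thus $w \leq v$. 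Condition $(\mathrm{I}_4)$ is handled the same way, unfolding the defining clause for $\I(\P_i, \bar w)$ and specialising its universally quantified witness to $y = w$.

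Condition $(\mathrm{I}_2)$ is then straightforward: if $w \in \dom(a)$ then $a \in \I(\n, \bar w)$ via the witness $x = w$, while $(\tilde w, a) \in \I(\N, \bar w)$ holds because any $t$ with $w R^{\sim} t \leq^{\sim} w$ equals $w$; conversely, $\alpha(w) \in \dom(a^{\bul})$ forces $(\tilde w, a) \in \I(\N, \bar w)$, and instantiating its universal clause at $t = w$ returns $w \in \dom(a)$. The real obstacle is the backward direction of $(\mathrm{I}_3)$. Unfolding $\alpha(w) \in a^{\bul}(\alpha(u))$ gives $\bar w = \bar u$ and $(a, \tilde w) \in \I(\E, \bar u)$, i.e.\ witnesses $y, z$ with $u R^{\sim} y$, $z \in a(y)$ and $z \leq^{\sim} w$. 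From $z \in a(y)$ one has $y \in \dom(a)$, and as $u R^{\sim} y$ with $u, y \in \dom(a)$, N-Cartesianness yields $a(u) = a(y)$, so $z \in a(u)$. This gives $u R^{\sim} z$, whence $z R^{\sim} w$ (using $\bar u = \bar w$); combined with $z \leq^{\sim} w$ this is again a $p \leq^{\sim} q R^{\sim} p$ situation, forcing $z = w$ and therefore $w \in a(u)$. The forward direction is easy, taking $y = u$ and $z = w$.

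With $(\mathrm{I}_1)$--$(\mathrm{I}_4)$ established, injectivity of $\nu$ follows as noted, so $(\alpha, \nu)$ is an isomorphism and $\mo{M} \cong (\mo{M}^{\circ})^{\bul}$. The only genuinely delicate point is the backward direction of $(\mathrm{I}_3)$, where the existential witnesses produced by the definitions of $\I(\E)$ and $R^{\sim}$ must be collapsed to an actual single element of $a(u)$; this is exactly where the N-Cartesian property (transporting $z$ along $R^{\sim}$ into $a(u)$) and $R^{\sim}$-Cartesianness are used in tandem. Everywhere else the Cartesian property merely reduces the closure relations $R^{\sim}$ and $\leq^{\sim}$ back to $R$ and $\leq$, making the remaining verifications routine.
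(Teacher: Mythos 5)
Your proposal is correct and takes essentially the same route as the paper's proof: the same candidate isomorphism $\alpha(w) = \langle \bar{w}, \tilde{w} \rangle$, $\nu(a) = a^{\bul}$, the same use of the $R^{\sim}$-Cartesian collapse ($p \leq^{\sim} q \, R^{\sim} \, p$ forces $p = q$) for injectivity, order-reflection and the valuation clause, and the same combination of N-Cartesianness with that collapse for the backward direction of (I$_3$). The only differences are organizational: the paper constructs an explicit inverse $\beta$ via the uniqueness statement of Lemma~\ref{lem:properties} and declares $\nu$ a bijection ``by construction,'' whereas you prove bijectivity of $\alpha$ directly and extract injectivity of $\nu$ from (I$_2$)--(I$_3$), which is a slightly more careful bookkeeping of the same argument.
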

\begin{proof}
  We first establish a bijection between $W$ and $\ov{W}^{\bul}$.
  For each $w \in W$ we have $w R^{\sim} w \leq^{\sim} w$,
  so $\langle \bar{w}, \tilde{w} \rangle \in \ov{W}^{\bul}$.
  Define $\alpha(w) := \langle \bar{w}, \tilde{w} \rangle$.
  If $\langle \bar{w}, \tilde{x} \rangle \in \overline{W}^{\bul}$ then
  by definition we can find some $v \in W$ such that $w R^{\sim} v \leq^{\sim} x$,
  and by Lemma~\ref{lem:properties}\eqref{it:properties-2} this $v$ is unique.
  So we can define $\beta(\langle \bar{w}, \tilde{x} \rangle)$ as the unique
  $v$ such that $w R^{\sim} v \leq^{\sim} x$.
  Since $w$ satisfies $w R^{\sim} w \leq^{\sim} w$ we have
  $\beta(\alpha(w)) = \beta(\langle \bar{w}, \tilde{w} \rangle) = w$.
  Conversely, if $\langle \bar{w}, \tilde{x} \rangle \in \ov{W}^{\bul}$
  and $v = \beta(\langle \bar{w}, \tilde{x} \rangle)$, then
  $w R^{\sim} v \leq^{\sim} x$ so $\bar{w} = \bar{v}$ and $\tilde{x} = \tilde{v}$,
  so that $\alpha(\beta(\langle \bar{w}, \tilde{x} \rangle)) = \alpha(v) = \langle \bar{v}, \tilde{v} \rangle = \langle \bar{w}, \tilde{x} \rangle$.
  So $\alpha : W \to \ov{W}^{\bul}$ is a bijection with inverse $\beta$.

  Next, let us show that $\alpha : (W, \leq) \to (\ov{W}^{\bul}, \leqq)$ is
  an order isomorphism.
  If $w \leq v$ then $\bar{w} \qel \bar{v}$ and $\tilde{w} = \tilde{v}$,
  so $\langle \bar{w}, \tilde{w} \rangle = \langle \bar{w}, \tilde{v} \rangle \leqq \langle \bar{v}, \tilde{v} \rangle$.
  Conversely, suppose $\langle \bar{w}, \tilde{w} \rangle \leqq \langle \bar{v}, \tilde{v} \rangle$. Then $\bar{w} \qel \bar{v}$ and $\tilde{w} = \tilde{v}$.
  This means that we can find a $v'$ such that $w \leq v' R^{\sim} v$.
  Since $w \leq^{\sim} v$ we then get $v \leq^{\sim} v' R^{\sim} v$,
  so since $\mo{M}$ is cartesian $v = v'$. This implies $w \leq v$.

  If $a \in N$ and $\dom(a) = \emptyset$ then $a$ is in the neighbourhood
  domain of all worlds in $\ov{W}$, hence $a^{\bul} \in N^{\bul}$.
  If $\dom(a) \neq \emptyset$ then we can find some $w \in W$ such that
  $w \in \dom(a)$, which implies $a \in \I(\n, \bar{w})$ and hence
  $a^{\bul} \in N^{\bul}$.
  This yields a map $N \to N^{\bul} : a \mapsto a^{\bul}$,
  which is a bijection by construction.
  To see that $a$ and $a^{\bul}$ correspond on their domains, compute
  $$
    w \in \dom(a)
      \iff a \in \I(\n, \bar{w}) \text{ and } (\tilde{w}, a) \in \I(\N, \bar{w})
      \iff \langle \bar{w}, \tilde{w} \rangle \in \dom(a^{\bul}).
  $$
  So we have $w \in \dom(a)$ if and only if $\alpha(w) \in \dom(a^{\bul})$.
  
  Next we prove that $v \in a(w)$ if and only if $\alpha(v) \in a^{\bul}(\alpha(w))$.
  Suppose $v \in a(w)$.
  Then $(\tilde{w}, a) \in \I(\N, \bar{w})$, because $w R^{\sim} w \leq^{\sim} w$ and $w \in \dom(a)$.
  Furthermore, we have $w R^{\sim} w$ and $v \in a(w)$ and $v \leq^{\sim} v$,
  so by definition $(a, \tilde{v}) \in \I(\E, \bar{w})$.
  This then implies $\langle \bar{w}, \tilde{v} \rangle \in a^{\bul}(\langle \bar{w}, \tilde{w} \rangle)$. Finally, since $v \in a(w)$ we have $w R^{\sim} v$, so
  $\bar{w} = \bar{v}$, hence $\langle \bar{v}, \tilde{v} \rangle \in a^{\bul}(\langle \bar{w}, \tilde{w} \rangle)$.

  For the converse, assume $\langle \bar{v}, \tilde{v} \rangle \in a^{\bul}(\langle \bar{w}, \tilde{w} \rangle)$.
  Then $\bar{w} = \bar{v}$, and $(\tilde{w}, a) \in \I(\N, \bar{w})$
  and $(a, \tilde{v}) \in \I(\E, \bar{w})$.
  Since $(\tilde{w}, a) \in \I(\N, \bar{w})$ we have $w \in \dom(a)$,
  and $(a, \tilde{v}) \in \I(\E, \bar{w})$ implies the existence of $y, z \in \I(\s, \bar{w})$
  such that $w R^{\sim} y$ and $z \in a(y)$ and $z \leq^{\sim} v$.
  In a diagram:
  \begin{equation*}
    \begin{tikzpicture}[scale=.75]
        \node (w) at (0,0) {$w$};
        \node (y) at (2,0) {$y$};
        \node (z) at (4,0) {$z$};
        \node (v) at (4,1.5) {$v$};
        \draw[Circle-Circle] (w) to (y);
        \draw[-Circle] (y) to node[above]{\footnotesize{$a$}} (z);
        \draw[latex-latex] (z) to (v);
        \draw[dashed, -Circle, rounded corners=.8em] (w) to (.6,-.6) to (3.4,-.6) to (z);
    \end{tikzpicture}
  \end{equation*}
  Then $y \in \dom(a)$, so since $w R^{\sim} y$ and $\mo{M}$ is N-Cartesian
  we find $z \in a(w)$.
  Since $\bar{w} = \bar{v}$, we find $z = \beta(\bar{v}, \tilde{v}) = v$,
  hence $v \in a(w)$.
  
  Lastly, it follows immediately from the definitions of $\I(\P_i)$
  and $V^{\bul}$ that $w \in V(p_i)$ if and only if
  $\langle \bar{w}, \tilde{w} \rangle \in V^{\bul}(p_i)$.
  We conclude that $\mo{M} \cong (\mo{M}^{\circ})^{\bul}$.
\end{proof}

  Now we come to the main theorem of the section.

\begin{theorem}\label{thm:Cartesian}
  An intuitionistic neighbourhood model $\mo{M}$ is isomorphic to
  an intuitionistic neighbourhood model of the form $\fomo{M}^{\bul}$,
  for some $\IFOM$-structure $\fomo{M}$, if and only if
  it is coherent and Cartesian.
\end{theorem}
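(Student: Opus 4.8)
The plan is to prove the two implications separately, leaning heavily on the results already established in this section; the theorem itself will be a short assembly.

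For the right-to-left direction, assume $\mo{M}$ is coherent and Cartesian. Then $\mo{M}^{\circ}$ is a genuine $\IFOM$-structure, and Proposition~\ref{prop:Cartesian} supplies an isomorphism $\mo{M} \cong (\mo{M}^{\circ})^{\bul}$. Setting $\fomo{M} = \mo{M}^{\circ}$ therefore exhibits $\mo{M}$ as isomorphic to a model of the required form, and nothing further is needed.

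For the left-to-right direction, assume $\mo{M} \cong \fomo{M}^{\bul}$ for some $\IFOM$-structure $\fomo{M}$. By the observation following Definition~\ref{def:coherent}, $\fomo{M}^{\bul}$ satisfies~\eqref{it:in-2} and~\eqref{it:in-3} and is hence coherent, and by Lemma~\ref{lem:Cartesian} it is Cartesian. So it remains to check that coherence and Cartesianity are preserved under isomorphism. I would do this by observing that each defining condition---namely~\eqref{it:in-2},~\eqref{it:in-3}, $R^{\sim}$-Cartesianity and N-Cartesianity---is phrased solely in terms of the order $\leq$, membership of a world in the domain of a neighbourhood, and the relation $v \in a(w)$, all of which are preserved and reflected by the bijections $(\alpha, \nu)$ of an isomorphism, by clauses $(\mathrm{I}_1)$--$(\mathrm{I}_3)$ of the definition. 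In particular the relation $R$, and therefore its equivalence closure $R^{\sim}$ and the closure $\leq^{\sim}$, is definable from this data and is transported by $(\alpha, \nu)$, so any witness to a coherence or Cartesian condition on one side pulls back to a witness on the other.

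The genuine difficulty lies entirely upstream. The right-to-left direction is only as strong as Proposition~\ref{prop:Cartesian}, whose proof builds the $\IFOM$-structure $\mo{M}^{\circ}$ out of the quotient $W/{R^{\sim}}$ and the $\leq^{\sim}$-classes and then verifies the isomorphism clause by clause; that construction is where coherence and the Cartesian property are really used, via Lemma~\ref{lem:properties}. Were these not already available, the main obstacle I would anticipate is showing that the interpretations $\I(\N)$, $\I(\E)$ and $\I(\P_i)$ on $\mo{M}^{\circ}$ are well defined on equivalence classes and increase along $\qel$---precisely the content checked in the lemma preceding Proposition~\ref{prop:Cartesian}. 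Given that groundwork, the present theorem follows immediately.
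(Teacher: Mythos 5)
Your proof is correct and follows essentially the same route as the paper's: the right-to-left direction invokes Proposition~\ref{prop:Cartesian} with $\fomo{M} = \mo{M}^{\circ}$, and the left-to-right direction rests on the fact that models of the form $\fomo{M}^{\bul}$ are coherent (observed after Definition~\ref{def:coherent}) and Cartesian (Lemma~\ref{lem:Cartesian}). If anything, you are more careful than the paper, which leaves implicit the step you spell out, namely that coherence and the Cartesian conditions are transported along isomorphisms via clauses $(\mathrm{I}_1)$--$(\mathrm{I}_3)$.
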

\begin{proof}
  The direction from left to right follows from the fact that every
  intuitionistic neighbourhood model of the form $\fomo{M}^{\bul}$ is
  coherent and Cartesian.
  Conversely, Proposition~\ref{prop:Cartesian} allows us to find a
  $\IFOM$-structure $\fomo{M}$ such that
  $\fomo{M}^{\bul}$ is isomorphic to $\mo{M}$ for each coherent and Cartesian model $\mo{M}$.
\end{proof}

\begin{corollary}\label{cor:Cartesian}
  Let $\mo{M} = (W, \leq, N, V)$ be a coherent and Cartesian intuitionistic
  neighbourhood model. For every formula $\phi$ and every $w \in W$,
  \begin{equation*}
    \mo{M}, w \Vdash \phi
      \iff \mo{M}^{\circ}, \bar{w}, \tilde{w} \Vdash \phi.
  \end{equation*}
\end{corollary}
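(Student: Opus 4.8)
The plan is to derive the corollary by composing the structural isomorphism of Proposition~\ref{prop:Cartesian} with the two truth-transfer results already proved, namely the isomorphism-invariance of truth (Lemma~\ref{lem:iso}) and Proposition~\ref{prop:truth-str-to-inm}, which relates an $\IFOM$-structure to the intuitionistic neighbourhood model it induces. Since $\mo{M}$ is coherent and Cartesian, Proposition~\ref{prop:Cartesian} supplies an isomorphism $(\alpha, \nu) : \mo{M} \to (\mo{M}^{\circ})^{\bul}$ whose first component is the map $\alpha(w) = \langle \bar{w}, \tilde{w}\rangle$. Everything then reduces to reading off truth along this isomorphism and pulling it back to the $\IFOM$-structure $\mo{M}^{\circ}$.

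Concretely, I would argue in two steps. First, applying Lemma~\ref{lem:iso} to $(\alpha, \nu)$ gives, for every $\phi \in \Lbd$ and every $w \in W$,
\[
  \mo{M}, w \Vdash \phi
    \iff (\mo{M}^{\circ})^{\bul}, \alpha(w) \Vdash \phi
    \iff (\mo{M}^{\circ})^{\bul}, \langle \bar{w}, \tilde{w}\rangle \Vdash \phi.
\]
Second, since $\mo{M}^{\circ}$ is an $\IFOM$-structure (by the preceding lemma), Proposition~\ref{prop:truth-str-to-inm} applied to $\fomo{M} = \mo{M}^{\circ}$, with world $\bar{w}$ and domain element $\tilde{w} \in \I(\s, \bar{w})$, yields
\[
  \mo{M}^{\circ}, \bar{w}, \tilde{w} \models \phi
    \iff (\mo{M}^{\circ})^{\bul}, \langle \bar{w}, \tilde{w}\rangle \Vdash \phi.
\]
Chaining the two equivalences (and recalling that $\models$ and $\Vdash$ on an $\IFOM$-structure denote the same forcing relation from Definition~\ref{def:IM-structure}) gives exactly $\mo{M}, w \Vdash \phi \iff \mo{M}^{\circ}, \bar{w}, \tilde{w} \Vdash \phi$.

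There is no genuine obstacle here: the corollary is a bookkeeping consequence of Proposition~\ref{prop:Cartesian}. The only points that require a moment's care are the two compatibility checks that make the chain well-typed. One must confirm that $\tilde{w}$ is an admissible evaluation point for $\mo{M}^{\circ}$ at $\bar{w}$, i.e.\ that $\tilde{w} \in \I(\s, \bar{w})$; this holds because $w \mathrel{R^{\sim}} w \leq^{\sim} w$, so the defining condition for membership in $\I(\s, \bar{w})$ is met. One must also confirm that the first component of the isomorphism produced by Proposition~\ref{prop:Cartesian} is precisely the map $w \mapsto \langle \bar{w}, \tilde{w}\rangle$, which is exactly the $\alpha$ constructed in its proof. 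With these observations the three cited results compose without friction.
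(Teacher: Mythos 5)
Your proposal is correct and follows essentially the same route as the paper: the paper's proof likewise chains the isomorphism of Proposition~\ref{prop:Cartesian} with Proposition~\ref{prop:truth-str-to-inm}. The only difference is that you make explicit the appeal to Lemma~\ref{lem:iso} and the well-typedness checks (that $\tilde{w} \in \I(\s, \bar{w})$ and that $\alpha(w) = \langle \bar{w}, \tilde{w}\rangle$), which the paper leaves implicit.
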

\begin{proof}
  We have $\mo{M}, w \Vdash \phi$ if and only if
  $(\mo{M}^{\circ})^{\bul}, \langle \bar{w}, \tilde{w} \rangle \Vdash \phi$
  because of the isomorphism from Proposition~\ref{prop:Cartesian},
  and $(\mo{M}^{\circ})^{\bul}, \langle \bar{w}, \tilde{w} \rangle \Vdash \phi$
  if and only if $\mo{M}^{\circ}, \bar{w}, \tilde{w} \models \phi$
  by Proposition~\ref{prop:truth-str-to-inm}.
\end{proof}

\subsection{Unravelling}\label{subsec:unravelling}

  In the previous section, we have seen that the class of $\IFOM$-structures
  and the class of coherent Cartesian intuitionistic neighbourhood models satisfy
  the same consecutions. Therefore $\Gamma \Vdash^{\inm} \phi$ implies $\Gamma \models \phi$.
  We prove that the reverse implication also holds by showing that for any
  intuitionistic neighbourhood model $\mo{M}$ and world $w$,
  we can construct a coherent and Cartesian intuitionistic neighbourhood  model
  $\mo{M}^{ur}_w$ in which there is a world $w'$ that satisfies precisely
  the same $\Lbd$-formulas as $w$.
  As a consequence, any consecution that is falsifiable in an intuitionistic
  neighbourhood model is falsifiable in a coherent and Cartesian one.
  We prove this using an adaptation of the unravelling technique
  for Kripke frames, see e.g.~\cite[Section~2.1]{BRV01} or~\cite[Section~3.3]{ChaZak97}.
  
  We start by defining paths through an intuitionistic neighbourhood frame or model.

\begin{definition}
  Let $\mo{F} = (W, \leq, N)$ be an intuitionistic neighbourhood frame.
  A \emph{path} through $\mo{F}$ is a sequence of the form
  $(w_0, r_0, w_1, r_1, \ldots, r_{n-1}, w_n)$ such that $w_j \in W$ for all
  $j \in \{ 0, \ldots, n \}$, and for all $i \in \{ 0, \ldots, n-1 \}$ either
  \begin{itemize}
    \item $r_i = {\leq}$ and $w_i \leq w_{i+1}$; or
    \item $r_i \in N$ and $w_i \in \dom(r_i)$ and $w_{i+1} \in r_i(w_i)$.
  \end{itemize}
  If $\vec{w} = (w_0, r_0, w_1, r_1, \ldots, r_{n-1}, w_n)$ is a path
  in $\mo{F}$ then we let $\first(\vec{w}) = w_0$ and $\last(\vec{w}) = w_n$,
  and we define $\length(\vec{w}) = n$.
  If $a \in N$ and $x \in W$ are such that $w_n \in \dom(a)$ and $x \in a(w_n)$,
  then we write $\vec{w} : a : x$ for the
  path obtained from appending $a$ and $x$ to $\vec{w}$.
  If $\vec{v} = (v_0, s_0, v_1, s_1, \ldots, s_{m-1}, v_m)$ is another path
  and $w_n = v_0$, then we denote by
  $\vec{w} \glue \vec{v}$ the path obtained from composing $\vec{w}$ and
  $\vec{v}$ and identifying $w_n$ with $v_0$.
  
  The path $\vec{w}$ is called an \emph{order path} if $r_i = {\leq}$
  for all $i \in \{ 0, \ldots, n-1 \}$,
  and a \emph{neighbourhood path} if $r_i \in N$ for all $i \in \{ 0, \ldots, n-1 \}$.
  Finally, we call $\vec{w}$ an \emph{unravelling path} if there exist
  an order path $\vec{v}$ and a neighbourhood path $\vec{u}$ such that
  $\vec{w} = \vec{v} \glue \vec{u}$.
  In this case, we define $\vec{w}_{ord} := \vec{v}$ and $\vec{w}_{nbd} := \vec{u}$.
\end{definition}

\begin{definition}
  Let $\mo{F} = (W, \leq, N)$ be a coherent intuitionistic neighbourhood frame
  and let $s \in W$ be a world. Then we let $W_s^{ur}$ denote the set of
  unravelling paths starting at $s$. 

  Let $\vec{w}, \vec{v} \in W^{ur}_s$ and suppose
  $\vec{w}_{nbd} = (w_0, a_0, w_1, a_1, \ldots, a_{n-1}, w_n)$
  and $\vec{v}_{nbd} = (v_0, b_0, v_1, b_1, \ldots, b_{m-1}, v_m)$.
  Then we let $\vec{w} \leq^{ur} \vec{v}$ if $n = m$, 
  $a_i = b_i$ for each $i \in \{ 0, \ldots, n-1 \}$, and there exist
  order-paths $\vec{t}_0, \ldots, \vec{t}_n$ such that
  \begin{itemize}
    \item $\length(\vec{t}_0) = \ldots = \length(\vec{t}_n)$; and
    \item for each $j \in \{ 0, \ldots, n \}$,
          $\first(\vec{t}_j) = w_j$ and
          $\last(\vec{t}_j) = v_j$
    \item $\vec{v}_{ord} = \vec{w}_{ord} \glue \vec{t}_0$
  \end{itemize}
\end{definition}

\newcommand{\exRed}{\textcolor{red}}
\newcommand{\exGreen}{\textcolor{ForestGreen}}
\newcommand{\exBlue}{\textcolor{blue}}

\begin{example}\label{exm:unrav-1}
  Consider the intuitionistic neighbourhood frame $(W, \leq, N)$ with
  $W = \{ s, t, u, v, w, x \}$, ordered by the reflexive closure of
  $w \leq v, u \leq t$ and $s \leq x$, and with $N = \{ a \}$ given by
  $a(v) = \{ u, t \}$, $a(u) = \{ s \}$ and $a(t) = \{ x \}$.
  This is depicted in Figure~\ref{fig:unrav-1}.
  Consider the unravelling paths
  \begin{equation*}
    \exRed{\vec{p}_1 = (w, {\leq}, v, a, u, a, s)}, \qquad
    \exGreen{\vec{p}_2 = (w, {\leq}, v, a, t, a, x)}, \qquad
    \exBlue{\vec{p}_3 = (w, {\leq}, v, {\leq}, v, a, u, a, s)}.
  \end{equation*}
  Then their order parts are
  $(\exRed{\vec{p}_1})_{ord} = (\exGreen{\vec{p}_2})_{ord} = (w, {\leq}, v)$
  and $(\exBlue{\vec{p}_3})_{ord} = (w, {\leq}, v, {\leq}, v)$.
  Their neighbourhood paths are
  $(\exRed{\vec{p}_1})_{nbd} = (v, a, u, a, s)$
  and $(\exGreen{\vec{p}_2})_{nbd} = (\exBlue{\vec{p}_3})_{nbd} = (v, a, t, a, x)$.
  
  \begin{figure}[h!]
    \centering
    \begin{tikzpicture}[scale=.75]
        \node (w) at (0,0)   {$w$};
        \node (v) at (0,1.5) {$v$};
        \node (u) at (2,1.5) {$u$};
        \node (t) at (2,3)   {$t$};
        \node (s) at (4,1.5) {$s$};
        \node (x) at (4,3)   {$x$};
        \draw[-latex] (w) to (v);
        \draw[-latex] (u) to (t);
        \draw[-latex] (s) to (x);
        \draw[-Circle] (v) to (u);
        \draw[-Circle] (v) to (t);
        \draw[-Circle] (u) to (s);
        \draw[-Circle] (t) to (x);
        \draw[thick, red, rounded corners=1mm, cap=round]
              (.25,-.1) -- (.25,1.25) -- node[below]{$\vec{p}_1$} 
              (4.1,1.25);
        \draw[thick, ForestGreen, rounded corners=1mm, cap=round]
              (-.25,-.1) -- (-.25,1.6)
                        -- (1.9,3.25) -- (4.1,3.25) node[right]{$\vec{p}_2$};
        \draw[thick, blue, rounded corners=.5mm, cap=round]
              (-.4,-.1) -- (-.4,1.6) arc(320:-20:.5)
                        -- node[xshift=-2mm,yshift=2mm]{$\vec{p}_3$} (1.8,3.4)
                        -- (4.1,3.4);
    \end{tikzpicture}
    \caption{The intuitionistic neighbourhood frame from Example~\ref{exm:unrav-1}.
             The arrows denote the order $\leq$, and the lines with a dot at the
             end denote the neighbourhood relation,
             i.e.%
             $\protect\tikz[baseline=-.8mm]
               {\protect\node (w) at (0,0) {$v$};
                \protect\node (v) at (1,0) {$u$};
                \protect\draw[-Circle] (w) to (v);}$%
             means $u \in a(v)$.}
    \label{fig:unrav-1}
  \end{figure}
  
  In this example we have $\exRed{\vec{p}_1} \nleq^{ur} \exGreen{\vec{p}_2}$:
  while we can find paths $\vec{t}_0, \vec{t}_1$ and $\vec{t}_2$
  connecting $(\exRed{\vec{p}_1})_{nbd}$ and $(\exGreen{\vec{p}_2})_{nbd}$ pointwise,
  these need to have length $> 1$ because for example
  $\vec{t}_2$ needs to be such that $\first(\vec{t}_2) = s$ and $\last(\vec{t}_2) = x$.
  In order to make $\vec{t}_0$ have length $> 1$ we need to take
  $\vec{t}_0 = (v, {\leq}, v)$ (and similar if we want to make the length of $\vec{t}_0$ even longer).
  But this implies
  \begin{equation*}
    (\exGreen{\vec{p}_2})_{ord}
      = (w, {\leq}, v)
      \neq (w, {\leq}, v, {\leq}, v)
      = (w, {\leq}, v) \glue (v, {\leq}, v)
      = (\exRed{\vec{p}_1})_{ord} \glue \vec{t}_0
  \end{equation*}
  On the other hand, taking $\vec{t}_0 = (v, {\leq}, v)$,
  $\vec{t}_1 = (u, {\leq}, t)$ and $\vec{t}_2 = (s, {\leq}, x)$
  shows that $\exRed{\vec{p}_1} \leq^{ur} \exBlue{\vec{p}_3}$.

  If we would allow the $\vec{t}_i$ to have different lengths,
  then we would get $\exRed{\vec{p}_1} \leq^{ur} \exGreen{\vec{p}_2}$.
  The reason we do not do this is that it would falsify the $R^{\sim}$-Cartesian
  property in the unravelling.
  We will see in Lemma~\ref{lem:unrav}\eqref{it:unrav-4} that two paths are equivalent
  modulo $R^{\sim}$ if their order paths coincide, so
  $\exRed{\vec{p}_1} R^{\sim} \exGreen{\vec{p}_2}$.
  Then, if we were to loosen our definition of $\leq^{ur}$, we would
  get $\exRed{\vec{p}_1} \leq^{\sim} \exGreen{\vec{p}_2} R^{\sim} \exRed{\vec{p}_1}$.
  Since $\exRed{\vec{p}_1} \neq \exGreen{\vec{p}_2}$ this violates the $R^{\sim}$-Cartesian property.
\end{example}

\begin{lemma}
  The pair $(W_s^{ur}, \leq^{ur})$ is an intuitionistic Kripke frame.
\end{lemma}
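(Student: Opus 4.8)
The plan is to verify that $\leq^{ur}$ is a partial order on $W_s^{ur}$, i.e.\ that it is reflexive, transitive and antisymmetric. Throughout I would rely on three elementary properties of the concatenation operation $\glue$ on paths: it is associative, gluing a length-$0$ path acts as an identity, and lengths add under gluing (so $\length(\vec{p} \glue \vec{q}) = \length(\vec{p}) + \length(\vec{q})$). I would also use that a single-world path $(w)$ is vacuously an order path of length $0$. Since $\leq^{ur}$ is defined entirely in terms of the canonical decomposition $\vec{w} = \vec{w}_{ord} \glue \vec{w}_{nbd}$, the whole argument is bookkeeping with these decompositions.

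For \emph{reflexivity}, given $\vec{w}$ with $\vec{w}_{nbd} = (w_0, a_0, \ldots, a_{n-1}, w_n)$, I take the connecting order-paths $\vec{t}_j := (w_j)$. These all have length $0$ and satisfy $\first(\vec{t}_j) = \last(\vec{t}_j) = w_j$, while $\vec{w}_{ord} \glue \vec{t}_0 = \vec{w}_{ord} \glue (w_0) = \vec{w}_{ord}$ since $w_0 = \last(\vec{w}_{ord})$; the label- and length-matching clauses hold trivially, so $\vec{w} \leq^{ur} \vec{w}$. For \emph{transitivity}, suppose $\vec{w} \leq^{ur} \vec{v} \leq^{ur} \vec{u}$, witnessed by uniform-length order-paths $\vec{t}_0, \ldots, \vec{t}_n$ (from $w_j$ to $v_j$) and $\vec{r}_0, \ldots, \vec{r}_n$ (from $v_j$ to $u_j$), with $\vec{v}_{ord} = \vec{w}_{ord} \glue \vec{t}_0$ and $\vec{u}_{ord} = \vec{v}_{ord} \glue \vec{r}_0$. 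The two relations force the neighbourhood labels to agree and the neighbourhood paths to share the common length $n$, so it only remains to produce connecting paths: I set $\vec{s}_j := \vec{t}_j \glue \vec{r}_j$, which is defined because $\last(\vec{t}_j) = v_j = \first(\vec{r}_j)$. Then $\first(\vec{s}_j) = w_j$, $\last(\vec{s}_j) = u_j$, and $\length(\vec{s}_j) = \length(\vec{t}_j) + \length(\vec{r}_j)$ is independent of $j$ by uniformity, while associativity of $\glue$ gives $\vec{u}_{ord} = (\vec{w}_{ord} \glue \vec{t}_0) \glue \vec{r}_0 = \vec{w}_{ord} \glue \vec{s}_0$. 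Hence $\vec{w} \leq^{ur} \vec{u}$.

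For \emph{antisymmetry}, suppose $\vec{w} \leq^{ur} \vec{v}$ and $\vec{v} \leq^{ur} \vec{w}$, witnessed by $\vec{t}_j$ (from $w_j$ to $v_j$) and $\vec{r}_j$ (from $v_j$ to $w_j$). Combining $\vec{v}_{ord} = \vec{w}_{ord} \glue \vec{t}_0$ with $\vec{w}_{ord} = \vec{v}_{ord} \glue \vec{r}_0$ yields $\vec{w}_{ord} = \vec{w}_{ord} \glue \vec{t}_0 \glue \vec{r}_0$, and comparing lengths forces $\length(\vec{t}_0) + \length(\vec{r}_0) = 0$, hence $\length(\vec{t}_0) = 0$. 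By the uniformity clause every $\vec{t}_j$ then has length $0$, so $v_j = \last(\vec{t}_j) = \first(\vec{t}_j) = w_j$ for all $j$; together with the coincidence of the neighbourhood labels this gives $\vec{w}_{nbd} = \vec{v}_{nbd}$. Moreover $\vec{v}_{ord} = \vec{w}_{ord} \glue (w_0) = \vec{w}_{ord}$, so the order parts coincide too, and therefore $\vec{w} = \vec{w}_{ord} \glue \vec{w}_{nbd} = \vec{v}_{ord} \glue \vec{v}_{nbd} = \vec{v}$.

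The genuinely delicate step is antisymmetry: it is precisely here that the rigid requirement in the definition of $\leq^{ur}$ — that all connecting order-paths $\vec{t}_j$ share one common length, tied to the gluing $\vec{v}_{ord} = \vec{w}_{ord} \glue \vec{t}_0$ — is exploited to collapse the witnesses to trivial paths and pin down both components of the path. As Example~\ref{exm:unrav-1} makes explicit, relaxing this uniformity would relate the distinct paths $\vec{p}_1$ and $\vec{p}_2$ and destroy antisymmetry, so the main care lies in confirming that the length bookkeeping really does determine the witnesses uniquely and that no looser reading of the definition is being smuggled in.
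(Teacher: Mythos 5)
Your proof is correct and follows essentially the same route as the paper's: a direct verification of reflexivity, transitivity and antisymmetry from the definition of $\leq^{ur}$, with antisymmetry handled exactly as in the paper (the two gluing equations force the connecting order-paths to be trivial, so both the neighbourhood and order parts coincide). The only cosmetic difference is that you witness reflexivity with length-$0$ connecting paths, whereas the paper appeals to reflexivity of $\leq$ (i.e.\ length-$1$ paths $(w_j, {\leq}, w_j)$); both are valid, and your transitivity argument merely spells out details the paper leaves to the reader.
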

\begin{proof}
  Reflexivity of $\leq^{ur}$ follows from reflexivity of $\leq$,
  transitivity of $\leq^{ur}$ follows from the definition.
  For antisymmetry, suppose $\vec{w} \leq^{ur} \vec{v} \leq^{ur} \vec{w}$.
  Then $\vec{w}_{ord}$ extends $\vec{v}_{ord}$ and $\vec{v}_{ord}$ extends
  $\vec{w}_{ord}$ so we must have $\vec{w}_{ord} = \vec{v}_{ord}$.
  This implies that the paths $\vec{t}_0, \ldots, \vec{t}_n$ connecting
  $\vec{w}_{nbd}$ and $\vec{v}_{nbd}$, where $n = \length(\vec{w}_{nbd})$,
  all have length 1. Then it follows from the definition of $\leq^{ur}$ that
  $\vec{w}_{nbd} = \vec{v}_{nbd}$. Therefore $\vec{w} = \vec{v}$.
\end{proof}

  We now define coherent neighbourhoods for $(W_s^{ur}, \leq^{ur})$.
  
\begin{definition}
  Let $\vec{w} \in W_s^{ur}$ and $a \in N$ such that $\last(\vec{w}) \in \dom(a)$.
  Define the neighbourhood $a_{\vec{w}} : W_s^{ur} \rightharpoonup \fun{P}(W_s^{ur})$
  to be the partial function with domain
  $\dom(a_{\vec{w}}) = \{ \vec{v} \in W_s^{ur} \mid \vec{w} \leq^{ur} \vec{v} \}$
  given by
  $$
    a_{\vec{w}}(\vec{v}) = \{ \vec{v} : a : x \mid x \in a(\last(\vec{v})) \}.
  $$
\end{definition}

  We verify that these are indeed coherent intuitionistic neighbourhoods.

\begin{lemma}\label{lem:unrav-coh}
  Let $\vec{w} \in W_s^{ur}$ and $a \in N$ such that $\last(\vec{w}) \in \dom(a)$.
  Then the partial function $a_{\vec{w}}$ is a coherent intuitionistic neighbourhood
  for $(W_s^{ur}, \leq^{ur})$.
\end{lemma}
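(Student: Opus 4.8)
The plan is to check, in turn, that $a_{\vec{w}}$ is an intuitionistic neighbourhood and that it satisfies the two coherence conditions. That its domain $\dom(a_{\vec{w}}) = \{ \vec{v} \in W_s^{ur} \mid \vec{w} \leq^{ur} \vec{v} \}$ is an upset is immediate from transitivity of $\leq^{ur}$, so $a_{\vec{w}}$ is a genuine intuitionistic neighbourhood; it remains to verify \eqref{it:in-2} and \eqref{it:in-3}. Throughout I use that every member of $a_{\vec{w}}(\vec{v})$ has the shape $\vec{v} : a : x$ with $x \in a(\last(\vec{v}))$, and that $\vec{z} \leq^{ur} \vec{z}'$ forces the neighbourhood parts of $\vec{z}$ and $\vec{z}'$ to have equal length and the same neighbourhoods.

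For \eqref{it:in-3} I would argue by truncation. Suppose $\vec{z} = \vec{v} : a : x \in a_{\vec{w}}(\vec{v})$ and $\vec{z} \leq^{ur} \vec{z}'$. Since the final neighbourhood of $\vec{z}_{nbd}$ is $a$, the path $\vec{z}'$ must end in a neighbourhood step through $a$, say $\vec{z}' = \vec{v}' : a : x'$; this $\vec{v}'$ is again an unravelling path and $x' \in a(\last(\vec{v}'))$ because $\vec{z}'$ is a legitimate path. Writing $n = \length(\vec{v}_{nbd})$, the witnesses $\vec{t}_0, \ldots, \vec{t}_{n+1}$ for $\vec{z} \leq^{ur} \vec{z}'$ restrict to witnesses $\vec{t}_0, \ldots, \vec{t}_n$ for $\vec{v} \leq^{ur} \vec{v}'$, and the order-part requirement $\vec{z}'_{ord} = \vec{z}_{ord} \glue \vec{t}_0$ coincides with $\vec{v}'_{ord} = \vec{v}_{ord} \glue \vec{t}_0$ since $\vec{z}, \vec{v}$ and $\vec{z}', \vec{v}'$ share their order parts. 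Hence $\vec{v} \leq^{ur} \vec{v}'$, and as $\vec{w} \leq^{ur} \vec{v} \leq^{ur} \vec{v}'$ we get $\vec{v}' \in \dom(a_{\vec{w}})$ and $\vec{z}' = \vec{v}' : a : x' \in a_{\vec{w}}(\vec{v}')$, as required.

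Condition \eqref{it:in-2} is the step I expect to be the main obstacle. Suppose $\vec{v} \leq^{ur} \vec{v}'$ and $\vec{z} = \vec{v} : a : x \in a_{\vec{w}}(\vec{v})$, and put $p = \last(\vec{v})$, $q = \last(\vec{v}')$. The witnessing order paths $\vec{t}_0, \ldots, \vec{t}_n$ for $\vec{v} \leq^{ur} \vec{v}'$ share one length $\ell = \length(\vec{v}'_{ord}) - \length(\vec{v}_{ord})$, and $\vec{t}_n$ witnesses $p \leq q$. Applying coherence of $a$ in the underlying frame (its condition \eqref{it:in-2}) to $p \leq q$ and $x \in a(p)$ produces $x' \in a(q)$ with $x \leq x'$, and I would take $\vec{z}' = \vec{v}' : a : x'$, attempting to witness $\vec{z} \leq^{ur} \vec{z}'$ by reusing $\vec{t}_0, \ldots, \vec{t}_n$ together with an order path $\vec{t}_{n+1}$ from $x$ to $x'$. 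The difficulty is that $\leq^{ur}$ insists all of $\vec{t}_0, \ldots, \vec{t}_{n+1}$ have the \emph{same} length, so $\vec{t}_{n+1}$ must have length exactly $\ell$, whereas $x \leq x'$ only furnishes a path of length $0$ or $1$. I resolve this by padding: if $\ell \geq 1$ I inflate the step $x \leq x'$ to an order path of length $\ell$ by prefixing $\ell - 1$ reflexive steps $x \leq x$; and if $\ell = 0$ then the witnesses collapse each $\vec{t}_j$ to a single world, forcing $\vec{v} = \vec{v}'$, so the claim holds trivially with $\vec{z}' = \vec{z}$. With $\vec{t}_{n+1}$ of length $\ell$ in hand, the neighbourhood parts of $\vec{z}$ and $\vec{z}'$ agree and the order-part condition is inherited from $\vec{v} \leq^{ur} \vec{v}'$, giving $\vec{z} \leq^{ur} \vec{z}'$; since $\vec{w} \leq^{ur} \vec{v}'$ by transitivity, $\vec{z}' \in a_{\vec{w}}(\vec{v}')$, which completes the verification.
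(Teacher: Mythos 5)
Your proof is correct, and its overall decomposition coincides with the paper's: the domain of $a_{\vec{w}}$ is an upset by definition, condition~\eqref{it:in-3} is handled by observing that any $\leq^{ur}$-successor of $\vec{v} : a : x$ must itself have the form $\vec{v}' : a : x'$ and that the witnessing order paths restrict to witnesses for $\vec{v} \leq^{ur} \vec{v}'$, and condition~\eqref{it:in-2} is the substantive case. Where you genuinely diverge is in how the length-$\ell$ order path from $x$ to $x'$ is produced in~\eqref{it:in-2}. The paper does not apply coherence of $a$ once to the composite inequality $\last(\vec{v}) \leq \last(\vec{v}')$; instead it applies~\eqref{it:in-2} for $a$ step by step along the last witness path $\vec{t}_n$, producing an order path $\vec{s}$ starting at $x$ whose endpoint lies in $a(\last(\vec{v}'))$ and which automatically has the same length as $\vec{t}_n$ — so no padding is needed and the case $\ell = 0$ is covered uniformly (the iteration is then empty and $\vec{s} = (x)$). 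Your alternative — a single application of coherence to $\last(\vec{v}) \leq \last(\vec{v}')$ followed by padding the step $x \leq x'$ with $\ell - 1$ reflexive steps — is equally sound: it works because $\leq$ is reflexive and because the definition of $\leq^{ur}$ imposes no condition relating the intermediate worlds of the witness paths to the neighbourhood $a$, so arbitrary order paths of the right length and endpoints suffice. The price is precisely the side case $\ell = 0$ that you identified, since one application of coherence cannot be compressed to a length-$0$ path; your resolution (all witnesses collapse, forcing $\vec{v} = \vec{v}'$, so $\vec{z}' = \vec{z}$ works) is correct. In short, the paper's iterated construction buys uniformity across all $\ell$, while your single-application-plus-padding argument is more economical in its use of the coherence hypothesis; both are valid.
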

\begin{proof}
  The domain of $a_{\vec{w}}$ is an upset by definition,
  so it is an intuitionistic neighbourhood.
  
  For~\eqref{it:in-2}, suppose $\vec{v} \leq^{ur} \vec{u}$
  and $(\vec{v} : a : x) \in a_{\vec{w}}(\vec{v})$.
  (Note that any element of $a_{\vec{w}}(\vec{v})$ must be of this shape.)
  By definition of $\leq^{ur}$, there exist order paths
  $\vec{t}_0, \ldots, \vec{t}_n$ between $\vec{v}_{nbd}$ and $\vec{u}_{nbd}$,
  where $n = \length(\vec{v}_{nbd}) = \length(\vec{u}_{nbd})$.
  The definition of $a_{\vec{w}}$ yields $x \in a(\last(\vec{v}))$.
  Since $a$ is an intuitionistic neighbourhood of $\mo{F}$,
  we can use~\eqref{it:in-2} repeatedly to find some path $\vec{s}$ of the
  same length as $\vec{t}_n$ such that $\last(\vec{s}) \in a(\last(\vec{u}))$.
  Let $y = \last(\vec{s})$. Then $(\vec{u} : a : y) \in a_{\vec{w}}(\vec{u})$,
  and $\vec{t}_0, \ldots, \vec{t}_n, \vec{s}$ witness that
  $(\vec{v} : a : x) \leq^{ur} (\vec{u} : a : y)$.
  This proves that~\eqref{it:in-2} holds for $a_{\vec{w}}$.

  To see that~\eqref{it:in-3} holds, suppose
  $(\vec{v} : a : x) \in a_{\vec{w}}(\vec{v})$
  and $(\vec{v} : a : x) \leq^{ur} \vec{u}$.
  Then there exists paths $\vec{t}_0, \ldots, \vec{t}_n$ between
  $(\vec{v} : a : x)_{nbd}$ and $\vec{u}_{nbd}$ witnessing
  $(\vec{v} : a : x) \leq^{ur} \vec{u}$.
  Furthermore, $\vec{u}$ must be of the form $\vec{s} : a : y$.
  The order-paths $\vec{t}_0, \ldots, \vec{t}_{n-1}$ then show that
  $\vec{v} \leq^{ur} \vec{s}$, and by definition of
  $a_{\vec{w}}$ we have $\vec{u} \in a_{\vec{w}}(\vec{s})$.
  So $a_{\vec{w}}$ satisfies~\eqref{it:in-3}.
\end{proof}

\begin{definition}
  Let $\mo{M} = (W, \leq, N, V)$ be a coherent intuitionistic neighbourhood model.
  Let $N^{ur} = \{ a_{\vec{w}} \mid \vec{w} \in W_s^{ur}, a \in N \text{ and } \last(\vec{w}) \in \dom(a) \}$.
  Define a valuation $V^{ur}$ for $(W_s^{ur}, \leq^{ur})$ by
  \begin{equation*}
    V^{ur}(p_i) = \{ \vec{w} \in W_s^{ur} \mid \last(\vec{w}) \in V(p_i) \}.
  \end{equation*}
  Then the \emph{unravelling of $\mo{M}$ from $s$} is the
  intuitionistic neighbourhood model
  \begin{equation*}
    \mo{M}_s^{ur} := (W_s^{ur}, \leq^{ur}, N^{ur}, V^{ur}).
  \end{equation*}
\end{definition}

  We know that $\mo{M}_s^{ur}$ is coherent.
  Before proving that an intuitionistic neighbourhood model $\mo{M}$ and its
  unravelling are related in a truth-preserving way, we verify that the
  unravelling is Cartesian.

\begin{lemma}\label{lem:unrav}
  Let $\mo{M} = (W, \leq, N, V)$ be a coherent intuitionistic neighbourhood
  model and $\mo{M}_s^{ur} = (W_s^{ur}, \leq^{ur}, N^{ur}, V^{ur})$ its
  unravelling from $s$.
  Define $\vec{w} R \vec{v}$ if $\vec{v} \in a(\vec{w})$ for some $a \in N^{ur}$.
  \begin{enumerate}
    \item \label{it:unrav-1}
          If $\vec{w} \leq^{ur, \sim} \vec{v}$
          then $\length(\vec{w}_{nbd}) = \length(\vec{v}_{nbd})$.
    \item \label{it:unrav-2}
          If $\vec{w} \leq^{ur} \vec{v}$ and $\length(\vec{w}) = \length(\vec{v})$
          then $\vec{w} = \vec{v}$.
    \item \label{it:unrav-3}
          If $\vec{w} \leq^{ur,\sim} \vec{v}$ and $\length(\vec{w}) = \length(\vec{v})$
          then $\vec{w} = \vec{v}$.
    \item \label{it:unrav-4}
          $\vec{w} R^{\sim} \vec{v}$ if and only if
          $\vec{w}_{ord} = \vec{v}_{ord}$.
  \end{enumerate}
\end{lemma}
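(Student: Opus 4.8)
The four claims are naturally proved in the order stated, with \eqref{it:unrav-3} being the crux. The plan is to read off \eqref{it:unrav-1} and \eqref{it:unrav-2} directly from the definition of $\leq^{ur}$, to bootstrap these into \eqref{it:unrav-3}, and to treat $R^{\sim}$ separately for \eqref{it:unrav-4}.

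For \eqref{it:unrav-1}, I would note that a single step $\vec{w} \leq^{ur} \vec{v}$ already forces $\length(\vec{w}_{nbd}) = \length(\vec{v}_{nbd})$, since the definition requires $n = m$. Hence ``having the same neighbourhood-length'' is an equivalence relation containing $\leq^{ur}$, so it contains the equivalence closure $\leq^{ur,\sim}$, which is exactly \eqref{it:unrav-1}. For \eqref{it:unrav-2} the key is a length computation: if $\vec{w} \leq^{ur} \vec{v}$ is witnessed by order-paths $\vec{t}_0, \ldots, \vec{t}_n$, then $\length(\vec{v}_{ord}) = \length(\vec{w}_{ord}) + \length(\vec{t}_0)$ because $\vec{v}_{ord} = \vec{w}_{ord} \glue \vec{t}_0$, while $\length(\vec{v}_{nbd}) = \length(\vec{w}_{nbd})$, so $\length(\vec{v}) = \length(\vec{w}) + \length(\vec{t}_0)$. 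The hypothesis $\length(\vec{w}) = \length(\vec{v})$ then gives $\length(\vec{t}_0) = 0$; since all $\vec{t}_j$ share this length, each $\vec{t}_j$ is trivial, whence $w_j = \first(\vec{t}_j) = \last(\vec{t}_j) = v_j$ for every $j$ and $\vec{w}_{ord} = \vec{v}_{ord}$. Together with $a_i = b_i$ this yields $\vec{w}_{nbd} = \vec{v}_{nbd}$, so $\vec{w} = \vec{v}$.

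The delicate step is \eqref{it:unrav-3}, where the relation is the full equivalence closure and one must control an arbitrary zigzag $\vec{w} = \vec{z}_0, \vec{z}_1, \ldots, \vec{z}_k = \vec{v}$ of $\leq^{ur}$- and $\geq^{ur}$-steps. The computation in \eqref{it:unrav-2} shows that $\length$ is strictly monotone along strict $\leq^{ur}$-steps (it increases by $\length(\vec{t}_0) > 0$ unless the step is an equality), so $\length$ acts as a strict rank function for $\leq^{ur}$. The plan is to combine this with \eqref{it:unrav-1} to deduce $\length(\vec{w}_{ord}) = \length(\vec{v}_{ord})$, and then to \emph{flatten} the zigzag: using the coherence conditions \eqref{it:in-2} and \eqref{it:in-3} of $\mo{M}$ one shows that any cospan $\vec{z}_{i-1} \leq^{ur} \vec{z}_i \geq^{ur} \vec{z}_{i+1}$ can be completed to a span through a common lower bound of strictly smaller length, so that every ``peak'' can be removed and the zigzag replaced by one all of whose vertices have length $\length(\vec{w})$. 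Once the zigzag is flat, each of its steps is an equality by \eqref{it:unrav-2}, giving $\vec{w} = \vec{v}$. I expect this flattening — making the amalgamation property of $\leq^{ur}$ precise and checking that it interacts correctly with the length bookkeeping — to be the main obstacle; it is exactly the phenomenon that the strict (equal-length) formulation of $\leq^{ur}$ is designed to control, as illustrated by $\vec{p}_1$ and $\vec{p}_2$ in Example~\ref{exm:unrav-1}.

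Finally, for \eqref{it:unrav-4} I would first unwind $R$. Since every neighbourhood in $N^{ur}$ has the form $a_{\vec{x}}$ with $a_{\vec{x}}(\vec{v}) = \{ \vec{v} : a : y \mid y \in a(\last(\vec{v})) \}$, the relation $\vec{w} R \vec{v}$ holds precisely when $\vec{v} = \vec{w} : a : y$ for some $a \in N$ with $\last(\vec{w}) \in \dom(a)$ and $y \in a(\last(\vec{w}))$; that is, $R$ appends a single neighbourhood step. Appending a neighbourhood step leaves the order part untouched, so $\vec{w} R \vec{v}$ implies $\vec{w}_{ord} = \vec{v}_{ord}$, and since equality of order parts is an equivalence relation this gives the forward implication $\vec{w} R^{\sim} \vec{v} \Rightarrow \vec{w}_{ord} = \vec{v}_{ord}$. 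For the converse, if $\vec{w}_{ord} = \vec{v}_{ord} = \vec{o}$, then $\vec{o}$ is itself an element of $W_s^{ur}$ (an order path from $s$), and both $\vec{w}$ and $\vec{v}$ are obtained from $\vec{o}$ by repeatedly appending the neighbourhood steps recorded in $\vec{w}_{nbd}$, respectively $\vec{v}_{nbd}$; each such appending is an $R$-step, so $\vec{o} \mathrel{R^{\sim}} \vec{w}$ and $\vec{o} \mathrel{R^{\sim}} \vec{v}$, whence $\vec{w} \mathrel{R^{\sim}} \vec{v}$ by symmetry and transitivity.
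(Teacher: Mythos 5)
Your items \eqref{it:unrav-1}, \eqref{it:unrav-2} and \eqref{it:unrav-4} are correct and essentially reproduce the paper's own argument: the same length bookkeeping for \eqref{it:unrav-2}, and for \eqref{it:unrav-4} the same two observations that an $R$-step appends a single neighbourhood step (hence preserves order parts) and that $\vec{w}_{ord} \mathrel{R^{\sim}} \vec{w}$. You are also right that \eqref{it:unrav-3} is the crux: the paper's entire proof of it is ``follows from repeated application of the second,'' which silently assumes that a $\leq^{ur}$-zigzag between two equal-length paths can be kept among paths of that length. Your flattening step is exactly what would be needed to justify this, and it is the step that fails.

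The amalgamation you propose --- that every peak $\vec{z}_{i-1} \leq^{ur} \vec{z}_i \geq^{ur} \vec{z}_{i+1}$ can be replaced by a valley through a common lower bound --- is false, and the coherent frame of Example~\ref{exm:unrav-1} already witnesses this. Let $\vec{q} = (w, {\leq}, v, {\leq}, v, a, t, a, x)$; this is the path the example intends by $\vec{p}_3$ (its displayed tuple there carries a typo, since the example states $(\vec{p}_3)_{nbd} = (v, a, t, a, x)$). The example itself verifies $\vec{p}_1 \leq^{ur} \vec{q}$ via $\vec{t}_0 = (v, {\leq}, v)$, $\vec{t}_1 = (u, {\leq}, t)$, $\vec{t}_2 = (s, {\leq}, x)$, and the reflexive paddings $\vec{s}_0 = (v, {\leq}, v)$, $\vec{s}_1 = (t, {\leq}, t)$, $\vec{s}_2 = (x, {\leq}, x)$ give $\vec{p}_2 \leq^{ur} \vec{q}$. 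So $\vec{p}_1 \leq^{ur} \vec{q} \geq^{ur} \vec{p}_2$ is a peak, yet $\vec{p}_1$ and $\vec{p}_2$ have no common $\leq^{ur}$-lower bound at all: any $\vec{r}$ below both has $\length(\vec{r}_{nbd}) = 2$ by \eqref{it:unrav-1} and $\vec{r}_{ord}$ an initial segment of $(w, {\leq}, v)$; if $\vec{r}_{ord} = (w, {\leq}, v)$ then \eqref{it:unrav-2} forces $\vec{r} = \vec{p}_1$ and $\vec{r} = \vec{p}_2$, a contradiction, and if $\vec{r}_{ord} = (w)$ then $\vec{r}_{nbd}$ would be a neighbourhood path of length $2$ starting at $w \notin \dom(a)$, which does not exist. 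Hence no flattening argument can succeed. In fact the same configuration refutes \eqref{it:unrav-3} itself: $\vec{p}_1 \leq^{ur,\sim} \vec{p}_2$ and $\length(\vec{p}_1) = \length(\vec{p}_2) = 3$, but $\vec{p}_1 \neq \vec{p}_2$; and since $(\vec{p}_1)_{ord} = (\vec{p}_2)_{ord}$ gives $\vec{p}_1 \mathrel{R^{\sim}} \vec{p}_2$ by \eqref{it:unrav-4}, the unravelled model is not even $R^{\sim}$-Cartesian, contrary to the proposition the lemma is used to prove. So the gap you flagged as the main obstacle is genuine but not closable: item \eqref{it:unrav-3} is false as stated, and the paper's one-line proof passes over precisely the zigzag-through-longer-paths phenomenon that your proposal correctly identified.
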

\begin{proof}
  The first item follows from the fact that $\vec{w} \leq^{ur} \vec{v}$
  implies $\length(\vec{w}_{nbd}) = \length(\vec{v}_{nbd})$, which holds
  by definition of $\leq^{ur}$.
  For the second item, suppose $\vec{w} \leq^{ur} \vec{v}$
  and $\length(\vec{w}) = \length(\vec{v})$. Then
  $\length(\vec{w}_{nbd}) = \length(\vec{v}_{nbd})$, hence
  $\length(\vec{w}_{ord}) = \length(\vec{v}_{ord})$. The latter implies
  that the order paths $\vec{t}_0, \ldots, \vec{t}_{\length(\vec{w}_{nbd})}$
  have length 1, which forces $\vec{w} = \vec{v}$.
  The third item follows from repeated application of the second.
  
  Lastly, note that $\vec{w} R \vec{v}$ if and only if
  $\vec{w} = (\vec{v} : a : x)$ or $\vec{v} = (\vec{w} : a : x)$ for some
  suitable $a \in N$ and $x \in W$. This entails that $\vec{w} R \vec{v}$
  implies $\vec{w}_{ord} = \vec{v}_{ord}$. Therefore
  $\vec{w} R^{\sim} \vec{v}$ implies $\vec{w}_{ord} = \vec{v}_{ord}$.
  Furthermore, we get that $\vec{w}_{ord} R^{\sim} \vec{w}$ for any
  $\vec{w} \in W_s^{ur}$. Therefore $\vec{w}_{ord} = \vec{v}_{ord}$
  implies $\vec{w} R^{\sim} \vec{v}$.
\end{proof}

\begin{proposition}
  Let $\mo{M} = (W, \leq, N, V)$ be a coherent intuitionistic neighbourhood model
  and $\mo{M}_s^{ur} = (W_s^{ur}, \leq^{ur}, N^{ur}, V^{ur})$ its unravelling
  from $s$. Then $\mo{M}_s^{ur}$ is Cartesian.
\end{proposition}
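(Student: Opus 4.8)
The plan is to verify the two defining conditions of Cartesianness for $\mo{M}_s^{ur}$ separately, namely $R^{\sim}$-Cartesianness and N-Cartesianness, and in both cases to reduce the claim to Lemma~\ref{lem:unrav}\eqref{it:unrav-3}, which says that two $\leq^{ur,\sim}$-related unravelling paths of equal length must coincide. Throughout I write $\length(\vec{w}) = \length(\vec{w}_{ord}) + \length(\vec{w}_{nbd})$, using that length is additive under the gluing operation $\glue$, and I use $R$ for the relation on $W_s^{ur}$ introduced in Lemma~\ref{lem:unrav}.

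For $R^{\sim}$-Cartesianness I would assume $\vec{w} \leq^{ur,\sim} \vec{v}$ and $\vec{v} R^{\sim} \vec{w}$, and aim to show $\vec{w} = \vec{v}$. From $\vec{v} R^{\sim} \vec{w}$ and Lemma~\ref{lem:unrav}\eqref{it:unrav-4} I obtain $\vec{w}_{ord} = \vec{v}_{ord}$, whence $\length(\vec{w}_{ord}) = \length(\vec{v}_{ord})$. From $\vec{w} \leq^{ur,\sim} \vec{v}$ and Lemma~\ref{lem:unrav}\eqref{it:unrav-1} I get $\length(\vec{w}_{nbd}) = \length(\vec{v}_{nbd})$. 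Adding these gives $\length(\vec{w}) = \length(\vec{v})$, and then Lemma~\ref{lem:unrav}\eqref{it:unrav-3} yields $\vec{w} = \vec{v}$, as required.

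For N-Cartesianness I would take a neighbourhood $a_{\vec{u}} \in N^{ur}$ together with paths $\vec{w}, \vec{v} \in \dom(a_{\vec{u}})$ satisfying $\vec{w} R^{\sim} \vec{v}$, and again show $\vec{w} = \vec{v}$; the desired equality $a_{\vec{u}}(\vec{w}) = a_{\vec{u}}(\vec{v})$ then holds trivially. This is the cleanest route, since $a_{\vec{u}}(\vec{w})$ and $a_{\vec{u}}(\vec{v})$ consist of paths having $\vec{w}$, respectively $\vec{v}$, as a prefix, so their equality (when nonempty) already forces $\vec{w} = \vec{v}$. Membership in $\dom(a_{\vec{u}})$ means $\vec{u} \leq^{ur} \vec{w}$ and $\vec{u} \leq^{ur} \vec{v}$, which by definition of $\leq^{ur}$ pins down $\length(\vec{w}_{nbd}) = \length(\vec{u}_{nbd}) = \length(\vec{v}_{nbd})$, and also gives $\vec{w} \leq^{ur,\sim} \vec{v}$ via the zigzag $\vec{w} \geq^{ur} \vec{u} \leq^{ur} \vec{v}$. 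As before, $\vec{w} R^{\sim} \vec{v}$ and Lemma~\ref{lem:unrav}\eqref{it:unrav-4} give $\vec{w}_{ord} = \vec{v}_{ord}$, so the order-lengths agree as well and $\length(\vec{w}) = \length(\vec{v})$; a final appeal to Lemma~\ref{lem:unrav}\eqref{it:unrav-3} gives $\vec{w} = \vec{v}$.

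Once Lemma~\ref{lem:unrav} is in place the argument is essentially bookkeeping, so I do not expect a genuine obstacle. The one point that needs care is the reduction in the N-Cartesian case: I must argue that equality of the two neighbourhood values is equivalent to equality of the two base paths, rather than attempting to compare the value-sets directly, and I must use that lying in a \emph{common} domain $\dom(a_{\vec{u}})$ is what fixes the neighbourhood-length of both paths and supplies the $\leq^{ur,\sim}$-link needed to invoke Lemma~\ref{lem:unrav}\eqref{it:unrav-3}.
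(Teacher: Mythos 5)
Your proof is correct and follows essentially the same route as the paper's: both cases are reduced, via the length bookkeeping from Lemma~\ref{lem:unrav} (items \eqref{it:unrav-1}, \eqref{it:unrav-3}, \eqref{it:unrav-4}), to showing the two paths are equal, with the N-Cartesian case using exactly the same zigzag through the common domain base to obtain the $\leq^{ur,\sim}$-link. The only differences are variable naming and your slightly more explicit remark that equality of the value-sets follows trivially from equality of the base paths.
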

\begin{proof}
  To see that $\mo{M}_s^{ur}$ is $R^{\sim}$-Cartesian,
  suppose $\vec{w} \leq^{ur, \sim} \vec{v} R^{\sim} \vec{w}$.
  Then $\vec{w} \leq^{ur, \sim} \vec{v}$ implies $\length(\vec{w}_{nbd}) = \length(\vec{v}_{nbd})$ and $\vec{v} R^{\sim} \vec{w}$ implies
  $\vec{v}_{ord} = \vec{w}_{ord}$.
  Therefore $\length(\vec{w}) = \length(\vec{v})$,
  so that Lemma~\ref{lem:unrav}\eqref{it:unrav-3} gives $\vec{w} = \vec{v}$.
  
  Second, suppose $\vec{v} R^{\sim} \vec{u}$, $a_{\vec{w}} \in N^{ur}$
  and $\vec{v}, \vec{u} \in \dom(a_{\vec{w}})$.
  Then $\vec{w} \leq^{ur} \vec{v}$ and $\vec{w} \leq^{ur} \vec{u}$,
  so $\vec{v} \leq^{ur, \sim} \vec{u}$.
  This implies $\length(\vec{v}_{nbd}) = \length(\vec{u}_{nbd})$.
  Further, $\vec{v} R^{\sim} \vec{u}$ implies that
  $\vec{v}_{ord} = \vec{u}_{ord}$.
  It follows that $\length(\vec{v}) = \length(\vec{u})$,
  so we can use Lemma~\ref{lem:unrav}\eqref{it:unrav-3}
  to see that $\vec{v} = \vec{u}$.
  Clearly this implies $a_{\vec{w}}(\vec{v}) = a_{\vec{w}}(\vec{u})$,
  so $\mo{M}_s^{ur}$ is N-Cartesian.
\end{proof}

  Finally, we prove that the unravelling preserves truth,
  in the following sense:

\begin{proposition}\label{prop:unrav}
  Let $\mo{M} = (W, \leq, N, V)$ be an intuitionistic neighbourhood
  model, $s \in W$ and $\mo{M}_s^{ur} = (W_s^{ur}, \leq^{ur}, N^{ur}, V^{ur})$
  its unravelling from $s$.
  Then for all $\vec{w} \in W_s^{ur}$ and all $\phi \in \Lbd$,
  $$
    \mo{M}_s^{ur}, \vec{w} \Vdash \phi \iff \mo{M}, \last(\vec{w}) \Vdash \phi.
  $$
\end{proposition}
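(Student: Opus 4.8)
The plan is to prove the equivalence by induction on the structure of $\phi$. The base and Boolean steps are routine: for $\phi = \bot$ neither side holds; for $\phi = p_i$ the claim is immediate from the definition $V^{ur}(p_i) = \{\vec{w} \mid \last(\vec{w}) \in V(p_i)\}$; and the cases $\phi = \psi \wedge \chi$ and $\phi = \psi \vee \chi$ follow directly from the induction hypothesis. All the real work lies in the implication and modal cases, and each of these hinges on two auxiliary facts about $\last$ and $\leq^{ur}$.

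The first fact is that $\last$ is monotone: if $\vec{w} \leq^{ur} \vec{v}$, then by definition of $\leq^{ur}$ there is a witnessing order path running from $\last(\vec{w})$ to $\last(\vec{v})$, so $\last(\vec{w}) \leq \last(\vec{v})$. The second, which is the genuine crux, is a \emph{lifting lemma}: for every $\vec{w} \in W_s^{ur}$ and every $v \geq \last(\vec{w})$ in $\mo{M}$ there is a path $\vec{v} \geq^{ur} \vec{w}$ with $\last(\vec{v}) = v$. To establish this, write $\vec{w}_{nbd} = (w_0, a_0, \ldots, a_{n-1}, w_n)$ with $w_n = \last(\vec{w})$, set $v_n := v$, and apply coherence condition~\eqref{it:in-3} repeatedly along the neighbourhood path from the top: from $w_{j+1} \in a_j(w_j)$ and $w_{j+1} \leq v_{j+1}$ we obtain some $v_j \geq w_j$ with $v_{j+1} \in a_j(v_j)$. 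The resulting path $\vec{v}$, with $\vec{v}_{nbd} = (v_0, a_0, \ldots, a_{n-1}, v_n)$ and $\vec{v}_{ord} = \vec{w}_{ord} \glue (w_0, {\leq}, v_0)$, satisfies $\vec{w} \leq^{ur} \vec{v}$ with the length-one order paths $(w_j, {\leq}, v_j)$ as witnesses, and $\last(\vec{v}) = v$ by construction. (This is exactly where coherence is indispensable: without~\eqref{it:in-3} one could not in general realise a prescribed successor $v$ as the endpoint of a $\leq^{ur}$-successor of $\vec{w}$.)

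With these two facts the implication case is short. For the forward direction, given $v \geq \last(\vec{w})$ with $\mo{M}, v \Vdash \psi$, the lifting lemma produces $\vec{v} \geq^{ur} \vec{w}$ with $\last(\vec{v}) = v$; applying the hypothesis $\mo{M}_s^{ur}, \vec{w} \Vdash \psi \to \chi$ and the induction hypothesis gives $\mo{M}, v \Vdash \chi$. For the converse, any $\vec{v} \geq^{ur} \vec{w}$ forcing $\psi$ has $\last(\vec{w}) \leq \last(\vec{v})$ by monotonicity of $\last$, and the induction hypothesis transfers forcing of $\psi$ and $\chi$ between the two models. For $\phi = \Box\psi$, recall that a neighbourhood $a_{\vec{u}} \in N^{ur}$ with $\vec{w} \in \dom(a_{\vec{u}})$ satisfies $a_{\vec{u}}(\vec{w}') = \{\vec{w}' : a : x \mid x \in a(\last(\vec{w}'))\}$ for every $\vec{w}' \geq^{ur} \vec{w}$, and that $\last(\vec{w}) \in \dom(a)$ since domains are upsets and $\last$ is monotone. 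A witness $a_{\vec{u}}$ for $\mo{M}_s^{ur}, \vec{w} \Vdash \Box\psi$ then makes $a$ a witness for $\mo{M}, \last(\vec{w}) \Vdash \Box\psi$: for $w'' \geq \last(\vec{w})$ and $x \in a(w'')$, the lifting lemma gives $\vec{w}' \geq^{ur} \vec{w}$ with $\last(\vec{w}') = w''$, so $\vec{w}' : a : x \in a_{\vec{u}}(\vec{w}')$ forces $\psi$ and the induction hypothesis yields $\mo{M}, x \Vdash \psi$. Conversely, a witness $a$ for $\mo{M}, \last(\vec{w}) \Vdash \Box\psi$ gives the neighbourhood $a_{\vec{w}}$, and monotonicity of $\last$ shows that every element of every $a_{\vec{w}}(\vec{w}')$ forces $\psi$.

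The case $\phi = \Diamond\psi$ is dual and proceeds along the same lines, once more combining the lifting lemma (to realise an arbitrary $w'' \geq \last(\vec{w})$ and neighbourhood $a$ as $\last(\vec{w}')$ with base $a_{\vec{w}'}$ in the forward direction), monotonicity of $\last$, and the explicit description of $a_{\vec{u}}(\vec{w}')$. I expect the lifting lemma to be the main obstacle: it is the only step genuinely using coherence, and it is responsible for matching the $\leq^{ur}$-successors of $\vec{w}$ bijectively, in a truth-preserving way, with the $\leq$-successors of $\last(\vec{w})$ together with their neighbourhood elements.
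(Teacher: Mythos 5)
Your proof is correct and takes essentially the same route as the paper's: an induction on $\phi$ whose two key ingredients—monotonicity of $\last$ along $\leq^{ur}$, and the lifting of any $\leq$-successor of $\last(\vec{w})$ to a $\leq^{ur}$-successor of $\vec{w}$ by repeated application of~\eqref{it:in-3}—are exactly what the paper uses (the lifting is its appeal to Figure~\ref{fig:use-N3}), with your only addition being that you isolate and prove this lifting step as an explicit lemma. The one inaccuracy is your closing claim that the lifting matches successors \emph{bijectively}: the map $\vec{v} \mapsto \last(\vec{v})$ is only a surjection onto the $\leq$-successors of $\last(\vec{w})$, but nothing in your argument actually relies on injectivity.
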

\begin{proof}
  We use induction on the structure of $\phi$.
  If $\phi = \bot$ then this is clearly true, and the case $\phi = p_i \in \Prop$
  follows from the definition of $V^{ur}$.
  The inductive cases for $\wedge$ and $\vee$ are routine.

  \medskip\noindent
  \textit{Induction step for $\phi = \psi \to \chi$.}
  Suppose $\mo{M}_s^{ur}, \vec{w} \Vdash \psi \to \chi$.
  Suppose $x$ is such that $\last(\vec{v}) \leq x$ and $\mo{M}, x \Vdash \psi$.
  Then we can use~\eqref{it:in-3} repeatedly to find an unravelling
  path $\vec{v}$ such that $\vec{w} \leq^{ur} \vec{v}$ and $\last(\vec{v}) = x$,
  see Figure~\ref{fig:use-N3}.
\begin{figure}[h!]
  \centering
    \begin{tikzpicture}[scale=.75]
        \node (00) at (0   ,.7) {$s$};
        \node (02) at (0   ,2) {$\circ$};
        \node (12) at (1.25,2) {$\circ$};
        \node (32) at (3.25,2) {$\circ$};
        \node (42) at (4.75 ,2) {$\circ$};
        \node (43) at (4.75 ,3.2) {$x$};
        \node (33) at (3.25,3.2) {$\circ$};
        \draw[-] (00) to (0,1.05);
        \draw[dotted,-, semithick] (0,1.05) to (0,1.5);
        \draw[-latex] (0,1.4) to (02);
        \draw[-Circle] (02) to (12);
        \draw[-] (12) to (1.9,2);
        \draw[dotted,-, semithick] (1.9,2) to (2.5,2);
        \draw[-Circle] (2.5,2) to (32);
        \draw[-Circle] (32) to node[above]{\footnotesize{$a$}} (42);
        \draw[-latex] (42) to (43);
        \draw[dashed,-latex] (32) to (33);
        \draw[dashed,-Circle] (33) to node[above]{\footnotesize{$a$}} (43);
        \draw[thick, red, rounded corners=1mm, cap=round]
              (.2,.65) -- (.2,1.8) -- node[below]{$\vec{w}$} 
              (4.8,1.8);
        \node at (6.4,2) {\Large{$\rightsquigarrow$}};
        \node (00) at (8   ,.7) {$s$};
        \node (02) at (8   ,2) {$\circ$};
        \node (12) at (9.25,2) {$\circ$};
        \node (32) at (11.25,2) {$\circ$};
        \node (42) at (12.75 ,2) {$\circ$};
        \node (43) at (12.75 ,3.2) {$x$};
        \node (33) at (11.25,3.2) {$\circ$};
        \node (13) at (9.25,3.2) {$\circ$};
        \node (03) at (8,3.2) {$\circ$};
        \draw[-] (00) to (8,1.05);
        \draw[dotted,-, semithick] (8,1.05) to (8,1.5);
        \draw[-latex] (8,1.4) to (02);
        \draw[-Circle] (02) to (12);
        \draw[-] (12) to (9.9,2);
        \draw[dotted,-, semithick] (9.9,2) to (10.5,2);
        \draw[-Circle] (10.5,2) to (32);
        \draw[-Circle] (32) to node[above]{\footnotesize{$a$}} (42);
        \draw[-latex] (42) to (43);
        \draw[dashed,-latex] (32) to (33);
        \draw[dashed,-Circle] (33) to (43);
        \draw[dashed,-Circle] (13) to (33);
        \draw[dashed,-Circle] (03) to (13);
        \draw[dashed,-latex] (12) to (13);
        \draw[dashed,-latex] (02) to (03);
        \draw[thick, red, rounded corners=1mm, cap=round]
              (8.2,.65) -- (8.2,1.8) -- node[below]{$\vec{w}$} (12.8,1.8);
        \draw[thick, blue, rounded corners=1mm, cap=round]
              (7.8,.65) -- (7.8,3.4) -- node[above]{$\vec{v}$} (12.8,3.4);
    \end{tikzpicture}
  \caption{From $\vec{w} \in W_s^{ur}$ and $\last(\vec{w}) \leq x$,
           we can construct $\vec{v} \in W_s^{ur}$ such that
           $\vec{w} \leq^{ur} \vec{v}$ and $\last(\vec{v}) = x$.}
  \label{fig:use-N3}
\end{figure}
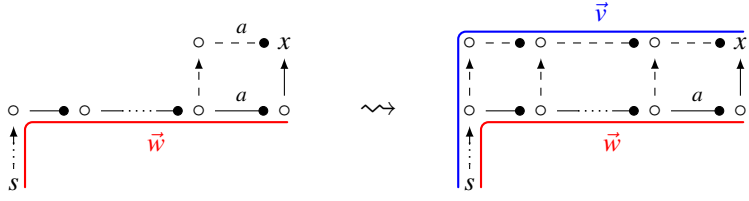
  By the induction hypothesis we have $\mo{M}_s^{ur}, \vec{v} \Vdash \psi$,
  and since $\vec{w} \leq^{ur} \vec{v}$ we must have $\mo{M}_s^{ur}, \vec{v} \Vdash \chi$.
  Using the induction hypothesis again gives $\mo{M}, x \Vdash \chi$.
  Since $x$ was any successor of $\last(\vec{w})$, it follows that
  $\mo{M}, \last(\vec{w}) \Vdash \psi \to \chi$.
  Conversely, if $\mo{M}, \last(\vec{w}) \Vdash \psi \to \chi$
  and $\vec{v} \in W_s^{ur}$ is such that $\vec{w} \leq^{ur} \vec{v}$
  and $\mo{M}_s^{ur}, \vec{v} \Vdash \psi$, then we have
  $\last(\vec{w}) \leq \last(\vec{v})$ and we can use induction to find
  $\mo{M}_s^{ur}, \vec{v} \Vdash \chi$. Therefore $\mo{M}_s^{ur}, \vec{w} \Vdash \psi \to \chi$.
  
  \medskip\noindent
  \textit{Induction step for $\phi = \Box\psi$.}
  Suppose $\mo{M}_s^{ur}, \vec{w} \Vdash \Box\psi$.
  Then there exists an intuitionistic neighbourhood $b \in N^{ur}$ such that
  $\vec{w} \in \dom(b)$ and such that $\vec{w} \leq^{ur} \vec{v}$
  implies $b(\vec{v}) \subseteq \llb \psi \rrb^{\mo{M}_s^{ur}}$.
  By construction of $\mo{M}_s^{ur}$, the neighbourhood $b$ must be of the form
  $a_{\vec{u}}$ for some $a \in N$ and $\vec{u} \in W_s^{ur}$ such that
  $\vec{u} \leq^{ur} \vec{w}$. We claim that $a$ witnesses $\mo{M}, w \Vdash \Box\psi$.
  To this end, let $x,y  \in W$ be such that $w \leq x$ and $y \in a(x)$.
  Then using repeated application of~\eqref{it:in-3} (see Figure~\ref{fig:use-N3})
  we can find some $\vec{v}$ such that $\vec{w} \leq \vec{v}$ and
  $\last(\vec{v}) = x$. By definition of $a_{\vec{u}}$, we then find
  $(\vec{v} : a : y) \in a_{\vec{u}}(\vec{v})$,
  so we must have $\mo{M}_s^{ur}, (\vec{v} : a : y) \Vdash \psi$.
  The induction hypothesis then implies $\mo{M}, y \Vdash \psi$.
  It follows that $\mo{M}, \last(\vec{w}) \Vdash \Box\psi$.
  
  Conversely, suppose $\mo{M}, \last(\vec{w}) \Vdash \Box\psi$.
  Then there exists an intuitionistic neighbourhood $a \in N$ with
  $\last(\vec{w}) \in \dom(a)$ such that for all $x \geq \last(\vec{w})$
  and all $y \in a(x)$ we have $\mo{M}, y \Vdash \psi$.
  Consider the intuitionistic neighbourhood $a_{\vec{w}}$ of $\mo{M}_s^{ur}$.
  Let $\vec{v}, \vec{u} \in W_s^{ur}$ be such that $\vec{w} \leq^{ur} \vec{v}$
  and $\vec{u} \in a_{\vec{w}}(\vec{v})$. Then $\vec{u}$ is of the form
  $\vec{v} : a : y$ and we have $y \in a(\last(\vec{v}))$ and $\last(\vec{w}) \leq \last(\vec{v})$. By assumption, this implies $\mo{M}, y \Vdash \psi$,
  so using the induction hypothesis we find $\mo{M}_s^{ur}, \vec{u} \Vdash \psi$,
  hence $\mo{M}_s^{ur}, \vec{w} \Vdash \Box\psi$.
  
  \medskip\noindent
  \textit{Induction step for $\phi = \Diamond\psi$.}
    Assume $\mo{M}_s^{ur}, \vec{w} \Vdash \Diamond\psi$.
    Let $x \geq \last(\vec{w})$ and $a \in N$ such that $x \in \dom(a)$.
    As above, we can find some $\vec{v} \in W_s^{ur}$ such that
    $\vec{w} \leq^{ur} \vec{v}$ and $\last(\vec{v}) = x$.
    By construction, we then have $\vec{v} \in \dom(a_{\vec{v}})$,
    so by assumption there exists a $\vec{u} \in a_{\vec{v}}(\vec{v})$
    such that $\mo{M}_s^{ur}, \vec{u} \Vdash \psi$.
    Unfolding the definitions shows that $\vec{u}$ must be of the form
    $\vec{v} : a : y$ for some $y \in W$ such that $y \in a(\last(\vec{v}))$.
    Therefore $y \in a(x)$, and by induction we have $\mo{M}, y \Vdash \psi$.
    We conclude that $\mo{M}, \last(\vec{w}) \Vdash \Diamond\psi$.
    
    Now suppose $\mo{M}, \last(\vec{w}) \Vdash \Diamond\psi$.
    Let $\vec{v} \in W_s^{ur}$ and $b \in N^{ur}$ be such that
    $\vec{w} \leq^{ur} \vec{v}$ and $\vec{v} \in \dom(b)$.
    Then $b$ is of the form $a_{\vec{u}}$ for some $\vec{u} \leq^{ur} \vec{v}$.
    By definition we have $\last(\vec{w}) \leq \last(\vec{v})$ and
    $\last(\vec{v}) \in \dom(a)$.
    The induction hypothesis then gives us some $y \in W$ such that
    $y \in a(\last(\vec{v}))$ and $\mo{M}, y \Vdash \psi$.
    But this means that $(\vec{v} : a : y) \in a_{\vec{u}}(\vec{v})$
    and $\mo{M}_s^{ur}, (\vec{v} : a : y) \Vdash \psi$.
    Therefore $\mo{M}_s^{ur}, \vec{w} \Vdash \Diamond\psi$.
\end{proof}

\begin{theorem}\label{thm:INF-vs-FOLM}
  We have $\Gamma \Vdash^{\inm} \phi$ iff $\Gamma \models \phi$.
\end{theorem}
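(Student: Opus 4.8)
The plan is to establish the two implications separately, leaning on the truth-transfer results proved earlier. The forward direction, $\Gamma \Vdash^{\inm}\phi \Rightarrow \Gamma\models\phi$, is immediate from Proposition~\ref{prop:truth-str-to-inm}. Given an $\IFOM$-structure $\fomo{M}$ with a world $w$ and $x\in\I(\s,w)$ such that $\fomo{M},w,x\models\Gamma$, I would pass to the induced model $\fomo{M}^{\bul}$, obtaining $\fomo{M}^{\bul},\langle w,x\rangle\Vdash\Gamma$. Since $\fomo{M}^{\bul}$ is an intuitionistic neighbourhood model and $\Gamma\Vdash^{\inm}\phi$, this yields $\fomo{M}^{\bul},\langle w,x\rangle\Vdash\phi$, and one further application of Proposition~\ref{prop:truth-str-to-inm} gives $\fomo{M},w,x\models\phi$. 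As $\fomo{M}$, $w$ and $x$ are arbitrary, $\Gamma\models\phi$.

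For the reverse direction I would argue by contraposition. Suppose $\Gamma\not\Vdash^{\inm}\phi$. By Theorem~\ref{thm:compl-coh} this is equivalent to $\Gamma\not\Vdash^{\coh}\phi$, so there is a \emph{coherent} intuitionistic neighbourhood model $\mo{M}=(W,\leq,N,V)$ together with a world $w$ satisfying $\mo{M},w\Vdash\Gamma$ and $\mo{M},w\not\Vdash\phi$. I would then unravel $\mo{M}$ from $w$. The resulting model $\mo{M}_w^{ur}$ is coherent (Lemma~\ref{lem:unrav-coh}) and, as shown earlier in this subsection, Cartesian. Taking the length-$0$ unravelling path $\vec{w}_\ast = (w)$, which satisfies $\last(\vec{w}_\ast)=w$, Proposition~\ref{prop:unrav} transfers truth to give $\mo{M}_w^{ur},\vec{w}_\ast\Vdash\Gamma$ and $\mo{M}_w^{ur},\vec{w}_\ast\not\Vdash\phi$.

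Finally I would cross over to the first-order side. Since $\mo{M}_w^{ur}$ is coherent and Cartesian, Corollary~\ref{cor:Cartesian} shows that the $\IFOM$-structure $(\mo{M}_w^{ur})^{\circ}$, evaluated at the point $(\overline{\vec{w}_\ast},\widetilde{\vec{w}_\ast})$, satisfies exactly the $\Lbd$-formulas true at $\vec{w}_\ast$ in $\mo{M}_w^{ur}$. Hence this point satisfies all of $\Gamma$ while refuting $\phi$, witnessing $\Gamma\not\models\phi$ and completing the contrapositive. I do not anticipate a genuinely difficult step in this argument: the substantive work has already been carried out in the unravelling construction and in the Cartesian correspondence of Section~\ref{subsec:cartesian}. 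The only points requiring attention are confirming that the trivial path counts as an unravelling path, so that Proposition~\ref{prop:unrav} applies at $w$ itself, and ensuring that $\mo{M}_w^{ur}$ meets both hypotheses of Corollary~\ref{cor:Cartesian}, coherence and Cartesianness; both follow directly from the lemmas already in place.
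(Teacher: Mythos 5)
Your proposal is correct and follows essentially the same route as the paper's own proof: the forward direction via Proposition~\ref{prop:truth-str-to-inm}, and the converse by contraposition through Theorem~\ref{thm:compl-coh}, the unravelling with Proposition~\ref{prop:unrav}, and Corollary~\ref{cor:Cartesian}. Your additional care about the trivial path and the Cartesian hypothesis merely makes explicit what the paper leaves implicit.
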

\begin{proof}
  If $\Gamma \Vdash^{\inm} \phi$ then it follows from
  Proposition~\ref{prop:truth-str-to-inm} that $\Gamma \models \phi$.
  If $\Gamma \not\Vdash^{\inm} \phi$ then there exists an intuitionistic neighbourhood
  model $\mo{M} = (W, \leq, N, V)$ and a world $w \in W$ such that
  $\mo{M}, w \Vdash \Gamma$ and $\mo{M}, w \not\Vdash \phi$.
  By Theorem~\ref{thm:compl-coh} we may assume that $\mo{M}$ is coherent.
  Let $\mo{M}_w^{ur}$ be the unravelling of $\mo{M}$ from $w$,
  and write $\vec{w}$ for the unravelling path consisting just of $w$.
  Then Proposition~\ref{prop:unrav} tells us $\mo{M}_w^{ur}, \vec{w} \Vdash \Gamma$
  while $\mo{M}_w^{ur}, \vec{w} \not\Vdash \phi$.
  Now it follows from Corollary~\ref{cor:Cartesian} that
  $(\mo{M}_w^{ur})^{\circ}$ is a $\IFOM$-structure which shows that
  $\Gamma \not\models \phi$.
\end{proof}

\subsection{Canonical model construction}\label{subsec:IM-complete}

  We construct a canonical model to derive strong completeness of $\IMfC$
  with respect to the class intuitionistic monotone models.
  This then proves that the calculus defining $\IMfC$ axiomatises $\IMf$.
  As usual in the intuitionistic setting, the canonical model is based on
  the set of prime theories ordered by inclusion.

\begin{definition}\label{def:prime}
  A \emph{prime theory} is
  a set $\Gamma \subseteq \Lbd$ such that,
  \begin{itemize}
    \item $\Gamma \vdash_{\IMfC} \phi$ implies $\phi \in \Delta$
          ($\Gamma$ is deductively closed);
    \item $\phi \vee \psi \in \Gamma$ implies $\phi \in \Gamma$ or $\psi \in \Gamma$
          ($\Gamma$ has disjunctive property);
    \item $\bot \notin \Gamma$ ($\Gamma$ is consistent).
  \end{itemize}
  A prime theory $\Gamma$ is called \emph{maximal} if it is not properly contained
  in any other prime theory.
\end{definition}

  We can prove a Lindenbaum lemma as usual, see for example~\cite[Lemma~11]{BezJon05}.

\begin{lemma}\label{lem:lindenbaum}
  Let $\Gamma \cup \{ \phi \} \subseteq \Lbd$ be a set of formulas such that
  $\Gamma \not\vdash \phi$.
  Then we can extend $\Gamma$ to a prime theory $\Gamma'$ such that
  $\Gamma \subseteq \Gamma'$ and $\phi \notin \Gamma'$.
\end{lemma}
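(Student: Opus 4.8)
The plan is to run the standard Lindenbaum construction adapted to the generalised Hilbert calculus $\IMfC$, obtaining $\Gamma'$ as a set that is maximal among those extending $\Gamma$ while still not deriving $\phi$. Concretely, I would consider the collection
\[
  \mathcal{T} = \{ \Delta \subseteq \Lbd \mid \Gamma \subseteq \Delta \text{ and } \Delta \not\vdash_{\IMfC} \phi \},
\]
which is nonempty because $\Gamma \in \mathcal{T}$ by hypothesis. The first step is to observe that derivations in $\IMfC$ are finite, so that $\Delta \vdash_{\IMfC} \phi$ holds if and only if some finite subset of $\Delta$ already derives $\phi$: the only rule referring to the context $\Delta$ is the element rule, and a proof tree invokes it finitely often. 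This finitarity guarantees that $\mathcal{T}$ is closed under unions of chains, so Zorn's lemma yields a maximal element $\Gamma' \in \mathcal{T}$. (If $\Prop$, and hence $\Lbd$, is taken countable, one may replace Zorn by an explicit enumeration of $\Lbd$.)

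It then remains to verify that $\Gamma'$ is a prime theory with $\phi \notin \Gamma'$. Deductive closure is immediate from maximality: if $\Gamma' \vdash_{\IMfC} \psi$ then $\Gamma' \cup \{ \psi \}$ has exactly the same consequences as $\Gamma'$, so it still lies in $\mathcal{T}$, and maximality forces $\psi \in \Gamma'$. Consequently $\phi \notin \Gamma'$, since otherwise the element rule would give $\Gamma' \vdash_{\IMfC} \phi$. Consistency follows because $\bot \in \Gamma'$ would yield $\Gamma' \vdash_{\IMfC} \phi$ by ex falso, a theorem of intuitionistic logic contained in $\Ax$.

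The key step, and the one requiring the most care, is the disjunction property, where the already-established deduction theorem does the heavy lifting. Suppose $\psi \vee \chi \in \Gamma'$ but $\psi, \chi \notin \Gamma'$. By maximality, both $\Gamma' \cup \{ \psi \}$ and $\Gamma' \cup \{ \chi \}$ derive $\phi$, so $\Gamma', \psi \vdash_{\IMfC} \phi$ and $\Gamma', \chi \vdash_{\IMfC} \phi$. Applying the deduction theorem gives $\Gamma' \vdash_{\IMfC} \psi \to \phi$ and $\Gamma' \vdash_{\IMfC} \chi \to \phi$. Combining these with $\psi \vee \chi \in \Gamma'$ via the intuitionistic disjunction-elimination principle $(\psi \to \phi) \to ((\chi \to \phi) \to ((\psi \vee \chi) \to \phi))$ and repeated modus ponens yields $\Gamma' \vdash_{\IMfC} \phi$, contradicting $\phi \notin \Gamma'$. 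Hence $\psi \in \Gamma'$ or $\chi \in \Gamma'$, as required. I expect the only real obstacle to be checking that the restricted shape of the monotonicity rules (which demand an empty context) does not interfere with the context manipulations above; this is precisely what the deduction theorem for $\IMC(\operatorname{Ax})$ secures, so no separate argument about those rules is needed.
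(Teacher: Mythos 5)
Your proposal is correct and is exactly the ``usual'' Lindenbaum argument that the paper invokes by citation (it refers to \cite[Lemma~11]{BezJon05} rather than spelling out a proof): a Zorn/maximality construction over sets not deriving $\phi$, with finitarity of derivations justifying closure under unions of chains, and the already-established deduction theorem for $\IMC(\operatorname{Ax})$ delivering deductive closure and the disjunction property. Your attention to the empty-context premises of the monotonicity rules --- both for finitarity and for why the deduction theorem absorbs all context manipulation --- is precisely the point that makes the standard argument go through in this generalised Hilbert calculus.
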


  The intuitionistic Kripke frame underlying our canonical model is standard
  except for a small change: we unravel each maximal prime theory into
  a chain of copies. This is necessary to make the diamond-case of the
  truth lemma go through. To this end, the worlds of the canonical model
  are given by pairs $(\Gamma, n)$, where $\Gamma$ is a prime theory and
  $n$ is a natural number. If $\Gamma$ is not maximal then we only allow $n = 0$,
  but if $\Gamma$ is maximal then $n$ ranges over $\mathbb{N}$.
  The intuitionistic neighbourhoods on the resulting intuitionistic Kripke
  frame are designed such that the model satisfies and falsifies formulas
  of the form $\Box\phi$ and $\Diamond\phi$ at the right worlds.
  For formulas of the form $\Diamond\phi$ we use many different neighbourhoods.
  While we could do with fewer intuitionistic neighbourhoods, the current
  setup makes the proof of the truth lemma go through more smoothly.

\begin{definition}\label{def:canon}
  The domain of our canonical model is given by
  \begin{align*}
    W_{\IMfC} := \{ (\Gamma, 0) \mid \Gamma \text{ is a non-maximal prime theory} \}
    \cup \{ (\Gamma, n) \mid \Gamma \text{ is a maximal prime theory and } n \in \mb{N} \}.
  \end{align*}
  Order $W_{\IMfC}$ pointwise, that is, let $(\Gamma, n) \cleq (\Gamma', n')$
  if $\Gamma \subseteq \Gamma'$ and $n \leq n'$.
  This gives an intuitionistic Kripke frame $(W_{\IMfC}, \cleq)$.
  For each $\phi \in \Lbd$, we write $\tilde{\phi} := \{ (\Gamma, n) \in W_{\IMfC} \mid \phi \in \Gamma \}$ and define the neighbourhood $a_{\phi}$ by
  \begin{align*}
    a_{\phi}
      : W_{\IMfC} \rightharpoonup \fun{P}(W_{\IMfC})
      : (\Gamma, n) &\mapsto
        \begin{cases}
          \tilde{\phi} &\text{if } \Box\phi \in \Gamma \\
          \text{undefined} &\text{otherwise}
        \end{cases}
  \intertext{Furthermore, for each $\psi \in \Lbd$ and
  $(\Delta, m) \in W_{\IMfC}$ such that $\Diamond\psi \notin \Delta$
  and $\Box\top \in \Delta$, define}
    b_{\psi, \Delta, m}
      : W_{\IMfC} \rightharpoonup \fun{P}(W_{\IMfC})
      : (\Gamma, n) &\mapsto
        \begin{cases}
          W_{\IMfC} \setminus \tilde{\psi} &\text{if } (\Delta, m) = (\Gamma, n) \\
          W_{\IMfC} &\text{if } (\Delta, m) \cleq (\Gamma, n) \text{ and } (\Delta, m) \neq (\Gamma, n) \\
          \text{undefined} &\text{otherwise}
        \end{cases}
  \end{align*}
  Let $N_{\IMfC} = \{ a_{\phi} \mid \phi \in \Lbd \} \cup \{ b_{\psi, \Delta, m} \mid \psi \in \Lbd, \Delta \in \widetilde{\Box\top} \}$.
  Finally, define a valuation $V_{\IMfC}$ of the proposition letters by
  $V(p_i) = \tilde{p_i}$, and define
  $$
    \mo{M}_{\IMfC} := (W_{\IMfC}, \cleq, N_{\IMfC}, V_{\IMfC}).
  $$
\end{definition}

\begin{lemma}
  The structure $\mo{M}_{\IMfC}$ is an intuitionistic neighbourhood model.
\end{lemma}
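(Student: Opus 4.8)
The plan is to check the three conditions from Definition~\ref{def:inm} that make a tuple $(W, \leq, N, V)$ an intuitionistic neighbourhood model: that $(W_{\IMfC}, \cleq)$ is an intuitionistic Kripke frame, i.e.~a partial order; that every member of $N_{\IMfC}$ is an intuitionistic neighbourhood in the sense of Definition~\ref{def:in}, i.e.~a partial function $W_{\IMfC} \rightharpoonup \fun{P}(W_{\IMfC})$ whose domain is an upset; and that $V_{\IMfC}$ sends each proposition letter to an upset. No coherence or Cartesian property is needed at this stage, so the whole verification reduces to a handful of short upward-closure arguments, all of which rest on the deductive closure of prime theories.

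First I would verify that $\cleq$ is a partial order. Reflexivity and transitivity are inherited componentwise from those of $\subseteq$ on sets of formulas and of $\leq$ on $\mb{N}$. For antisymmetry, if $(\Gamma, n) \cleq (\Gamma', n')$ and $(\Gamma', n') \cleq (\Gamma, n)$, then $\Gamma \subseteq \Gamma'$ and $\Gamma' \subseteq \Gamma$, so $\Gamma = \Gamma'$, and likewise $n = n'$ by antisymmetry of $\leq$ on $\mb{N}$; hence $(\Gamma, n) = (\Gamma', n')$.

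Next I would check that the neighbourhoods have upset domains; each $a_\phi$ and $b_{\psi,\Delta,m}$ maps into $\fun{P}(W_{\IMfC})$ by construction, so only the domain condition needs attention. The domain of $a_\phi$ is $\widetilde{\Box\phi} = \{ (\Gamma, n) \mid \Box\phi \in \Gamma \}$, which is upward closed because $(\Gamma, n) \cleq (\Gamma', n')$ gives $\Gamma \subseteq \Gamma'$, and $\Box\phi \in \Gamma$ then yields $\Box\phi \in \Gamma'$. The domain of $b_{\psi,\Delta,m}$ is the principal upset $\{ (\Gamma, n) \mid (\Delta, m) \cleq (\Gamma, n) \}$ of $(\Delta, m)$, which is an upset with no further work.

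Finally, the valuation condition $V_{\IMfC}(p_i) = \tilde{p_i} \in \up(W_{\IMfC}, \cleq)$ follows by the same reasoning as for $a_\phi$: if $p_i \in \Gamma$ and $(\Gamma, n) \cleq (\Gamma', n')$, then $\Gamma \subseteq \Gamma'$ forces $p_i \in \Gamma'$. I expect no genuine obstacle in this lemma; the only point requiring care is to recognise that every upward-closure fact above is an instance of the single observation that membership in a prime theory is preserved under passing to a larger prime theory, which is exactly what inclusion of deductively closed sets provides. The substantive work is all deferred to the truth lemma to come.
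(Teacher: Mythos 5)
Your proposal is correct and follows essentially the same route as the paper's own (much terser) proof: check that $\cleq$ is a partial order, that the domains of the $a_\phi$ and $b_{\psi,\Delta,m}$ are upsets, and that $V_{\IMfC}$ maps proposition letters to upsets. The only cosmetic quibble is that the upward-closure facts rest purely on set inclusion of theories, not on their deductive closure, but the argument you actually give (membership preserved under $\Gamma \subseteq \Gamma'$) is exactly right.
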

\begin{proof}
  It is easy to see that $(W_{\IMfC}, \cleq)$ is an intuitionistic Kripke frame.
  By construction, the domain of each of the partial functions in $N_{\IMfC}$ is
  upward closed, so the $N_{\IMfC}$ consists of intuitionistic neighbourhoods.
  Besides, it follows immediately from the definition of $V_{\IMfC}$ that it
  maps proposition letters to upsets of $(W_{\IMfC}, \cleq)$.
  So $\mo{M}_{\IMfC}$ is an intuitionistic neighbourhood model.
\end{proof}

  We now show that the truth lemma goes through without problems.

\begin{lemma}[Truth lemma]\label{lem:truth}
  For all $(\Gamma, n) \in W_{\IMfC}$ and $\phi \in \Lbd$ we have
  $$
    \phi \in \Gamma \iff \mo{M}_{\IMfC}, (\Gamma, n) \Vdash \phi.
  $$
\end{lemma}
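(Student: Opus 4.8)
The plan is to prove both directions simultaneously by induction on the structure of $\phi$, exactly as in the standard canonical-model argument for intuitionistic logic, with all the real work concentrated in the modal cases. For $\phi = \bot$ the equivalence holds because $\bot \notin \Gamma$ by consistency and $\bot$ is never forced, and for $\phi = p_i$ it is immediate from $V_{\IMfC}(p_i) = \tilde{p_i}$. The cases $\phi = \psi \wedge \chi$ and $\phi = \psi \vee \chi$ use deductive closure and the disjunction property of prime theories. For $\phi = \psi \to \chi$ I would argue as usual: left-to-right uses deductive closure and persistence, while for the converse, assuming $\psi \to \chi \notin \Gamma$, the deduction theorem gives $\Gamma, \psi \not\vdash \chi$, so the Lindenbaum lemma (Lemma~\ref{lem:lindenbaum}) yields a prime $\Gamma' \supseteq \Gamma \cup \{\psi\}$ with $\chi \notin \Gamma'$; a second coordinate $n' \geq n$ exists because $\Gamma \subsetneq \Gamma'$ forces $\Gamma$ non-maximal, hence $n = 0$, giving a successor that refutes $\psi \to \chi$.

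For $\phi = \Box\psi$ the forward direction is easy: if $\Box\psi \in \Gamma$ then $a_\psi$ is defined at $(\Gamma, n)$, is constantly $\tilde{\psi}$ on its domain, and the induction hypothesis gives $\tilde{\psi} \subseteq \llb \psi \rrb$, so $a_\psi$ witnesses $(\Gamma, n) \Vdash \Box\psi$. Conversely, suppose $(\Gamma, n) \Vdash \Box\psi$ with witness $a$. If $a = a_\chi$, then $\Box\chi \in \Gamma$ and $\tilde{\chi} = a_\chi(\Gamma, n) \subseteq \llb \psi \rrb = \tilde{\psi}$ by the induction hypothesis; the Lindenbaum lemma forces $\chi \vdash_{\IMfC} \psi$, so $(\RMonB)$ gives $\Gamma \vdash_{\IMfC} \Box\chi \to \Box\psi$ and hence $\Box\psi \in \Gamma$. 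If instead $a = b_{\rho, \Delta, m}$, then $(\Delta, m) \cleq (\Gamma, n)$ and $\Box\top \in \Delta \subseteq \Gamma$; here I would use that $(\Gamma, n)$ has a strict successor, at which $b_{\rho, \Delta, m}$ takes the value $W_{\IMfC}$, forcing $\llb \psi \rrb = W_{\IMfC}$ and so $\vdash_{\IMfC} \psi$, whence $(\RMonB)$ and $\Box\top \in \Gamma$ yield $\Box\psi \in \Gamma$.

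For $\phi = \Diamond\psi$ I would treat the backward direction by contraposition: if $\Diamond\psi \notin \Gamma$, then $(\AInt)$ and the deduction theorem give $\Gamma \cup \{\Box\top\} \not\vdash_{\IMfC} \Diamond\psi$, so Lindenbaum supplies a prime $\Gamma' \supseteq \Gamma \cup \{\Box\top\}$ with $\Diamond\psi \notin \Gamma'$; at the corresponding world the neighbourhood $b_{\psi, \Gamma', n'}$ takes value $W_{\IMfC} \setminus \tilde{\psi}$, which contains no world forcing $\psi$, refuting $\Diamond\psi$ at $(\Gamma, n)$. The otherwise problematic case of a maximal $\Gamma$ with $\Box\top \notin \Gamma$ does not arise, since maximality then forces $\neg\Box\top \in \Gamma$, and $(\AInt)$ yields $\Diamond\psi \in \Gamma$ for every $\psi$. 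For the forward direction, fix $w' = (\Gamma', n') \geq (\Gamma, n)$ and $a \in N_{w'}$; both shapes of $a$ are handled via $(\RMonD)$ and $(\ANega)$. If $a = a_\chi$, then $\Box\chi, \Diamond\psi \in \Gamma'$ and $\chi \wedge \psi$ must be consistent (otherwise $\psi \vdash_{\IMfC} \neg\chi$, so $\Diamond\neg\chi \in \Gamma'$, contradicting $(\ANega)$), producing a prime $\Delta$ with $\chi, \psi \in \Delta$ inside $a_\chi(w')$; if $a = b_{\rho, \Delta, m}$, one argues similarly that the value meets $\llb \psi \rrb$, using $\Box\top \in \Delta$ and $(\ANega)$ to exclude $\Diamond\bot$.

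The main obstacle I expect is the backward modal direction, and in particular controlling the auxiliary neighbourhoods $b_{\rho, \Delta, m}$. These are tailored to falsify diamonds, but they threaten to spuriously witness a $\Box\psi$ at a maximal prime theory, where no strict successor would otherwise exist. The unravelling of each maximal prime theory into the chain $\{(\Gamma, n) \mid n \in \mathbb{N}\}$ is precisely what guarantees every world a strict successor, collapsing each $b$-neighbourhood to the value $W_{\IMfC}$ above the current point and thereby forcing $\vdash_{\IMfC} \psi$. Checking that $(\ANega)$ and $(\AInt)$ interact correctly with primeness and maximality, and that the second coordinate can always be chosen to keep us inside $W_{\IMfC}$, is the delicate bookkeeping that closes the induction.
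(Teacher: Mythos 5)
Your proposal is correct and follows essentially the same route as the paper's proof: the same induction, the same case split on the two shapes of canonical neighbourhoods, the same appeals to the Lindenbaum lemma, the monotonicity rules, ($\ANega$) and ($\AInt$), and--crucially--the existence of strict successors provided by the unravelling of maximal prime theories, which prevents the $b$-neighbourhoods from spuriously witnessing boxes. Your added observation that a maximal $\Gamma$ with $\Box\top \notin \Gamma$ would contain $\Diamond\psi$ for every $\psi$ is a harmless refinement of the paper's implicit handling of the second coordinate in the diamond case.
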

\begin{proof}
  We prove the lemma by induction on the structure of $\phi$.
  If $\phi = \bot$ or $\phi = p_i \in \Prop$ then the lemma is obvious.
  The induction steps for $\phi = \psi \wedge \chi$ and $\phi = \psi \vee \chi$
  follow from the fact that they are interpreted locally.
  The induction step for implication also proceeds as usual.
  
  \medskip\noindent
  \textit{Induction step for $\phi = \Box\psi$.}
  Suppose $\Box\psi \in \Gamma$. Then $\Gamma \in \dom(a_{\psi})$,
  and by induction we have that $(\Delta, m) \in \tilde{\psi}$
  implies $(\Delta, m) \Vdash \psi$.
  Since $a_{\psi}(\Gamma', k) = \tilde{\psi}$ whenever $(\Gamma, n) \cleq (\Gamma', k)$,
  we find that $(\Gamma, n) \Vdash \Box\psi$.
  
  Conversely, if $(\Gamma, n) \Vdash \Box\psi$ then there exists a neighbourhood
  $c \in N_{\IMfC}$ such that $c(\Gamma', n') \subseteq \llb \psi \rrb$ for all
  $(\Gamma', n')$ above $(\Gamma, n)$.
  If $c = a_{\chi}$ then $\Box\chi \in \Gamma$.
  Further, then we have $\tilde{\chi} \subseteq \llb \psi \rrb^{\mo{M}_{\IMfC}}$.
  By induction we then have $\llb \psi \rrb^{\mo{M}_{\IMfC}} = \tilde{\psi}$,
  so that $\tilde{\chi} \subseteq \tilde{\psi}$, which in turn
  implies that $\vdash_{\IMfC} \chi \to \psi$.
  Therefore, by monotonicity $\vdash \Box\chi \to \Box\psi$,
  hence $\Box\psi \in \Gamma$.
  If $c = b_{\chi, \Delta, m}$ then we must have $W_{\IMfC} \subseteq \tilde{\psi}$,
  so $\vdash_{\IMfC} \top \to \psi$ (because $b_{\chi, \Delta, m}$ becomes $W_{\IMfC}$ on
  successors of $(\Delta, m)$ and every element of our frame has a successor).
  Also by definition this means $\Box\top \in \Delta$ and $(\Delta, m) \cleq (\Gamma, n)$,
  so that $\top \to \psi$ implies $\Box\top \to \Box\psi$ hence $\Box\psi \in \Gamma$.
  
  \medskip\noindent
  \textit{Induction step for $\phi = \Diamond\psi$.}
  Assume $\Diamond\psi \in \Gamma$ and let $(\Gamma', n')$ be any successor
  of $(\Gamma, n)$.
  If $a_{\chi}$ is a neighbourhood of $(\Gamma', n')$, then by definition
  $\Box\chi \in \Gamma'$.
  This means $\Gamma' \not\vdash (\Box\chi \wedge \Diamond\psi) \to \bot$,
  so that it follows from ($\ANega$) that $\not\vdash (\chi \wedge \psi) \to \bot$.
  This implies that there exists a prime theory containing both
  $\chi$ and $\psi$, hence it is an element in $a_{\chi}(\Gamma', n') = \tilde{\chi}$
  that satisfies $\psi$.
  If $b_{\chi, \Delta, m}$ is a neighbourhood of $(\Gamma', n')$
  then either it maps to $W_{\IMfC}$, which clearly has a world in it satisfying $\chi$,
  or it maps to $W_{\IMfC} \setminus \tilde{\chi}$ and $(\Gamma', n') = (\Delta, m)$
  and $\Diamond\chi \notin \Delta$.
  If the latter does not have a world in it that satisfies $\psi$,
  then we have $\tilde{\psi} \subseteq \tilde{\chi}$.
  This implies $\vdash \psi \to \chi$, hence by monotonicity
  $\Diamond\psi \to \Diamond\chi$.
  By assumption $\Delta\psi \in \Gamma \subseteq \Gamma'$,
  but that means $\Diamond\chi \in \Gamma' = \Delta$, a contradiction.
  
  For the converse, assume $\Diamond\psi \notin \Gamma$.
  Then $\Gamma \cup \{ \Box\top \} \not\vdash \Diamond\psi$,
  for otherwise we would have $\Gamma \vdash \Box\top \to \Diamond\psi$,
  which by ($\AInt$) and ($\RMP$) implies $\Gamma \vdash \Diamond\psi$,
  hence by deductive closure of theories $\Diamond\psi \in \Gamma$.
  So we can extend $\Gamma$ to a prime theory $\Gamma'$ containing $\Box\top$
  but not $\Diamond\psi$, which gives an element $(\Gamma', n) \in W_{\IMfC}$
  such that $(\Gamma, n) \cleq (\Gamma', n)$.
  Now by definition $b_{\psi, \Gamma', n}$ is such that
  $b_{\psi, \Gamma', n}(\Gamma', n) = (W_{\IMfC} \setminus \tilde{\psi})$
  and this proves $(\Gamma, n) \not\Vdash \Diamond\psi$.
\end{proof}

\begin{theorem}\label{thm:IM-IMC}
  We have $\Gamma \vdash_{\IMfC} \phi$ if and only if
  $\Gamma \Vdash^{\inm} \phi$ if and only if $\Gamma \models \phi$ if and only if $\Gamma \vdash_{\IMf} \phi$.
\end{theorem}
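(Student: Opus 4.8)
The plan is to observe that the asserted four-fold equivalence is a cycle whose three ``outer'' links are already in place, and to close it with the single remaining implication. Concretely, Theorem~\ref{thm:sound} (soundness) supplies $\Gamma \vdash_{\IMfC} \phi \Rightarrow \Gamma \Vdash^{\inm} \phi$; Theorem~\ref{thm:INF-vs-FOLM} supplies $\Gamma \Vdash^{\inm} \phi$ iff $\Gamma \models \phi$; and Theorem~\ref{thm:sc-triv} supplies $\Gamma \models \phi$ iff $\Gamma \vdash_{\IMf} \phi$. Chaining these yields every implication in the cycle except \emph{completeness} of the calculus, namely $\Gamma \Vdash^{\inm} \phi \Rightarrow \Gamma \vdash_{\IMfC} \phi$. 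Once this is established, all four conditions become equivalent.

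I would prove completeness by contraposition, using the canonical model $\mo{M}_{\IMfC}$ of Definition~\ref{def:canon}. Assume $\Gamma \not\vdash_{\IMfC} \phi$. By the Lindenbaum lemma (Lemma~\ref{lem:lindenbaum}) I extend $\Gamma$ to a prime theory $\Gamma'$ with $\Gamma \subseteq \Gamma'$ and $\phi \notin \Gamma'$. This $\Gamma'$ determines at least one world of the canonical frame: the world $(\Gamma', 0)$ if $\Gamma'$ is non-maximal, and any $(\Gamma', n)$ if $\Gamma'$ is maximal. Fix such a world $(\Gamma', n) \in W_{\IMfC}$.

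The Truth lemma (Lemma~\ref{lem:truth}) then tells us that for every $\psi \in \Lbd$ we have $\psi \in \Gamma'$ if and only if $\mo{M}_{\IMfC}, (\Gamma', n) \Vdash \psi$. Since $\Gamma \subseteq \Gamma'$, this gives $\mo{M}_{\IMfC}, (\Gamma', n) \Vdash \Gamma$, while $\phi \notin \Gamma'$ yields $\mo{M}_{\IMfC}, (\Gamma', n) \not\Vdash \phi$. As $\mo{M}_{\IMfC}$ is an intuitionistic neighbourhood model, this witnesses $\Gamma \not\Vdash^{\inm} \phi$, which is precisely the contrapositive of completeness. Combining this implication with the three links above closes the cycle and proves the theorem.

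I expect no genuine obstacle at this stage: the substantive work---the unravelling and Cartesian characterisation behind Theorem~\ref{thm:INF-vs-FOLM}, together with the verification of the Truth lemma---has already been carried out. The only point requiring a little care is the choice of the world $(\Gamma', n)$, which must be made uniformly whether or not $\Gamma'$ is maximal; but this is immediate from the definition of $W_{\IMfC}$, since every prime theory indexes at least one element of the canonical frame.
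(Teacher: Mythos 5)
Your proposal is correct and takes essentially the same route as the paper: both reduce the four-fold equivalence to the single biconditional $\Gamma \vdash_{\IMfC} \phi \Leftrightarrow \Gamma \Vdash^{\inm} \phi$ by chaining Theorems~\ref{thm:sound}, \ref{thm:INF-vs-FOLM} and~\ref{thm:sc-triv}, and then prove completeness by contraposition via Lemma~\ref{lem:lindenbaum} and the Truth lemma (Lemma~\ref{lem:truth}) applied to a world of the canonical model $\mo{M}_{\IMfC}$. The only cosmetic difference is that the paper simply uses the world $(\Delta, 0)$, which exists for every prime theory $\Delta$ regardless of maximality, rather than discussing the choice of index $n$.
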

\begin{proof}
  Theorem~\ref{thm:INF-vs-FOLM} states that $\Gamma \Vdash^{\inm} \phi$
  if and only if $\Gamma \models \phi$, and by Theorem~\ref{thm:sc-triv}
  we have $\Gamma \models \phi$ if and only if $\Gamma \vdash_{\IMf} \phi$,
  so we only have to prove the first ``iff.''
  If $\Gamma \vdash_{\IMfC} \phi$, then Theorem~\ref{thm:sound}
  gives $\Gamma \Vdash^{\inm} \phi$. We prove the converse by contraposition.
  Suppose $\Gamma \not\vdash_{\IMfC} \phi$.
  Then we can use Lemma~\ref{lem:lindenbaum} to find a prime theory
  $\Delta$ extending $\Gamma$ that does not contain $\phi$.
  Then we have $\mo{M}_{\IMfC}, (\Delta, 0) \not\Vdash \Gamma$
  and $\mo{M}_{\IMfC}, (\Delta, 0) \Vdash \phi$
  by Lemma~\ref{lem:truth}, so that $\Gamma \not\Vdash^{\inm} \phi$.
\end{proof}

\section{Relation to other logics}\label{sec:relation}

  In order to position $\IMf$ into the landscape of existing intuitionistic
  modal logics, we investigate the relation of $\IMf$ to various other
  intuitionistic monotone modal logics, as well as to its normal
  modal counterpart $\IK$.

\subsection{Comparing intuitionistic monotone modal logics}\label{subsec:comparison}

  There are two intuitionistic counterparts of monotone modal logic that often
  appear in the literature: constructive monotone modal logic
  $\WM$~\cite{DalGreOli20,Dal22},
  and the extension of intuitionistic logic with a single monotone
  modality $\Mon$, which we denote by $\iM$.
  We write $\lan{L}_{\Mon}$ for the language over which $\iM$ is defined.
  A generalised Hilbert calculus for $\iM$ can be obtained by reading $\Mon$ as $\Box$
  and deleting ($\RMonD$) in Definition~\ref{def:WM}.

  In this section we consider the connection between $\iM$, $\WM$ and $\IMf$.
  First, we recall \emph{constructive neighbourhood semantics} for $\WM$ 
  and use this to show that $\WM$ proves strictly fewer consecutions than $\IMf$.
  We then show that $\IMf$ is sound and complete with respect to the class
  of so-called \emph{full} constructive neighbourhood models,
  thereby providing yet another frame semantics for $\IMf$.
  
  Next, we observe that there are two obvious ways to map the language of
  $\iM$ to $\Lbd$, namely by viewing $\Mon$ either as $\Box$ or as $\Diamond$.
  This gives rise to a comparison of $\iM$ with the box-free and
  diamond-free fragments of $\WM$ and $\IMf$.
  This is analogous to the study of $\Diamond$-free fragments of various
  intuitionistic normal modal logics, see~\cite[Section~9.2]{Gre99}
  and~\cite{DasMar23,GroShiClo24}.
  
\begin{definition}
  A \emph{constructive neighbourhood model} is a tuple
  $\mo{M} = (W, \preceq, \gamma, V)$
  consisting of a preordered set $(W, \preceq)$, a neighbourhood function
  $\gamma : W \to \pow\pow(W)$, and a valuation $V : \Prop \to \up(W, \preceq)$.
  The interpretation of an $\Lbd$-formula at a world $w \in W$
  is defined via the usual propositional clauses and the following modal clauses:
  \begin{align*}
    \mo{M}, w \Vdash \Box\phi
      &\iff \text{for all } w' \geq w
            \text{ there exists } a \in \gamma(w')
            \text{ such that for all } v \in a,
            \text{ we have } \mo{M}, v \Vdash \phi \\
    \mo{M}, w \Vdash \Diamond\phi
      &\iff \text{for all } w' \geq w
            \text{ and all } a \in \gamma(w')
            \text{ there exists } v \in a,
            \text{ such that } \mo{M}, v \Vdash \phi
  \end{align*}
\end{definition}

  It was proven in \cite[Section~4]{Dal22} that $\WM$ is sound and complete
  with respect to the class of constructive neighbourhood models,
  and the proof in fact entails strong completeness.
  The same class of models can be used to interpret the language $\lan{L}_{\Mon}$,
  interpreting $\Mon$ in the same way as $\Box$. This clearly provides a
  sound semantics, and a straightforward canonical model construction proves
  (strong) completeness as well. So we have:

\begin{theorem}\label{thm:compl-iM-WM}
  Both $\WM$ and $\iM$ are sound and strongly complete with respect to the class
  of constructive neighbourhood models.
\end{theorem}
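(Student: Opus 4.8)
The plan is to treat the two logics separately. For $\WM$, soundness and strong completeness with respect to the class of constructive neighbourhood models is precisely the content of~\cite[Section~4]{Dal22}, so this half of the statement requires nothing beyond invoking that result. For $\iM$ I would first establish soundness. Since $\Mon$ is interpreted exactly as $\Box$ and the only modal rule of $\iM$ is $(\RMonB)$, soundness reduces to the well-known validity of the intuitionistic axioms together with persistence, and to the validity of the monotonicity rule. The latter is immediate from the box clause: if $\llb\phi\rrb^{\mo{M}} \subseteq \llb\psi\rrb^{\mo{M}}$ in every constructive neighbourhood model, then any neighbourhood $a \subseteq \llb\phi\rrb^{\mo{M}}$ witnessing $\Mon\phi$ also witnesses $\Mon\psi$, so $\Mon\phi \to \Mon\psi$ is valid.

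For strong completeness I would build a canonical model $\mo{M}^c = (W^c, \subseteq, \nf^c, V^c)$, where $W^c$ is the set of prime theories of $\iM$ over $\lan{L}_{\Mon}$, ordered by inclusion. Writing $\tilde{\phi} = \{ \Delta \in W^c \mid \phi \in \Delta \}$, I would set
\[
  \nf^c(\Gamma) = \{ \tilde{\phi} \mid \Mon\phi \in \Gamma \},
  \qquad
  V^c(p_i) = \tilde{p_i}.
\]
Each $V^c(p_i)$ is an upset by deductive closure, so this is a legitimate constructive neighbourhood model, and unlike the canonical model for $\IMfC$ no unravelling of maximal points is needed, since there is no $\Diamond$-clause to accommodate. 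The Lindenbaum lemma (the analogue of Lemma~\ref{lem:lindenbaum} for $\iM$) supplies, for any underivable consecution $\Gamma \not\vdash_{\iM} \phi$, a prime theory extending $\Gamma$ and omitting $\phi$.

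The crux is then the truth lemma $\phi \in \Gamma \iff \mo{M}^c, \Gamma \Vdash \phi$, proven by induction on $\phi$; all cases except $\Mon$ are routine. For $\Mon\phi$: if $\Mon\phi \in \Gamma$, then deductive closure gives $\Mon\phi \in \Gamma'$ for every $\Gamma' \supseteq \Gamma$, so $\tilde{\phi} \in \nf^c(\Gamma')$ witnesses the box clause, using the induction hypothesis $\tilde{\phi} = \llb\phi\rrb^{\mo{M}^c}$. Conversely, if $\mo{M}^c, \Gamma \Vdash \Mon\phi$, then instantiating the clause at $\Gamma$ itself (since $\Gamma \subseteq \Gamma$) yields some $\tilde{\chi} \in \nf^c(\Gamma)$ with $\Mon\chi \in \Gamma$ and $\tilde{\chi} \subseteq \llb\phi\rrb^{\mo{M}^c} = \tilde{\phi}$; the inclusion $\tilde{\chi} \subseteq \tilde{\phi}$ forces $\vdash_{\iM} \chi \to \phi$ via Lindenbaum, whence $(\RMonB)$ gives $\vdash_{\iM} \Mon\chi \to \Mon\phi$ and thus $\Mon\phi \in \Gamma$. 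The only point requiring care --- and the expected main obstacle, though a mild one --- is reconciling the universal quantifier over successors in the box clause with the canonical neighbourhoods: deductive closure ensures that $\Mon\phi$ persists upward, so that the witness $\tilde{\phi}$ is available at every successor, while the converse needs a witness only at $\Gamma$ itself. Strong completeness of $\iM$ then follows from the truth lemma by contraposition in the usual way.
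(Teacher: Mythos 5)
Your proposal is correct and follows essentially the same route as the paper: the $\WM$ half is delegated to \cite[Section~4]{Dal22}, and for $\iM$ soundness is checked directly (reading $\Mon$ as $\Box$) while strong completeness comes from a canonical model of prime theories with $\nf^c(\Gamma) = \{ \tilde{\phi} \mid \Mon\phi \in \Gamma \}$ and a truth lemma. The paper merely asserts that this canonical model construction is ``straightforward''; the details you supply---including the observation that no unravelling of maximal points is needed and the $\Mon$ case of the truth lemma via Lindenbaum and the monotonicity rule---are exactly what that assertion refers to, and they check out.
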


  The next example shows that $(\Box\top \to \Diamond p) \to \Diamond p$ is
  not derivable in $\WM$.
  Since this is an axiom of $\IMf$, it shows that $\WM$ proves fewer formulas
  than $\IMf$.

\begin{example}
  Let $W = \{ w, v \}$ ordered by $w \preceq w \preceq v \preceq v$.
  Define $\gamma : W \to \fun{PP}W$ by $\gamma(w) = \{ \{ w \} \}$ and
  $\gamma(v) = \emptyset$.
  Define a valuation $V$ of the proposition letter $P$ by $V(p) = \{ v \}$.
  Then $\mo{M} = (W, \preceq, \gamma, V)$ is a constructive neighbourhood model.
  
  Both $w$ and $v$ do not satisfy $\Box\top$, because $\gamma(v) = \emptyset$.
  Therefore $w \Vdash \Box\top \to \Diamond p$. But $w \not\Vdash \Diamond p$,
  because $\{ w \} \in \gamma(w)$ and $w \not\Vdash p$.
  This shows that $w \not\Vdash (\Box\top \to \Diamond p) \to \Diamond p$,
  hence $\WM$ does not prove $(\Box\top \to \Diamond p) \to \Diamond p$.
\end{example}

\begin{remark}
  Various other semantics were used for $\iM$.
  For example, in~\cite[Section~6]{DalGreOli20} the extension of intuitionistic
  logic with a monotone (box) modality was interpreted in 
  constructive neighbourhood models $(W, \preceq, \gamma, V)$
  such that (i) $\preceq$ is a partial order, (ii) $\gamma(w)$ is upwards closed under
  inclusion, and (iii) $w \preceq w'$ implies $\gamma(w) \subseteq \gamma(w')$.
  The interpretation of the modality can then be simplified to
  $w \Vdash \Mon\phi$ iff $\llb \phi \rrb \in \gamma(w)$.
 
  If moreover we allow only upsets as neighbourhoods, i.e.~we view $\gamma$ as a
  map from $W$ to $\fun{P}(\up(W, \preceq))$, then we obtain the semantics
  for $\iM$ used by Goldblatt~\cite[Section~4]{Gol81},
  and later in~\cite[Section~8]{GroPat20} and~\cite[Sections~4.1 and~4.2]{Gro22gt}.
\end{remark}

  Since $\WM$ is weaker than $\IMf$, we can try to use (certain)
  constructive neighbourhood models as semantics for $\IMf$.
  To this end, first we note that a constructive neighbourhood frame
  $(W, \preceq, \gamma, V)$ satisfies $(\Box\top \to \Diamond\phi) \to \Diamond\phi$ if it
  satisfies: if $\gamma(w) \neq \emptyset$ and $w \leq w'$ then $\gamma(w') \neq \emptyset$.
  Let us call such constructive neighbourhood models \emph{full}.
  In passing, we note: 

\begin{proposition}\label{prop:compl-iM}
  $\iM$ is sound and strongly complete with respect to
  the class of full constructive neighbourhood models.
\end{proposition}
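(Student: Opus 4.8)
The plan is to reduce the statement to the canonical model already implicit in Theorem~\ref{thm:compl-iM-WM} and to observe that, for $\iM$, this model happens to be full. Soundness needs no new work: every full constructive neighbourhood model is in particular a constructive neighbourhood model, so soundness of $\iM$ with respect to the whole class (Theorem~\ref{thm:compl-iM-WM}) immediately restricts to soundness with respect to the full ones. The genuine content is completeness, which does \emph{not} follow formally from Theorem~\ref{thm:compl-iM-WM}, since passing to a smaller class of models can only make completeness harder. Thus I must exhibit, for every underivable consecution, a \emph{full} model falsifying it.

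First I would recall the canonical model for $\iM$ underlying the strong completeness half of Theorem~\ref{thm:compl-iM-WM}. Its worlds are the prime theories of $\iM$ over $\lan{L}_{\Mon}$, ordered by inclusion; writing $\tilde\psi := \{\,\Gamma \mid \psi \in \Gamma\,\}$ for each $\psi \in \lan{L}_{\Mon}$, the neighbourhood function and valuation are
\[
  \gamma(\Gamma) = \{\, \tilde\psi \mid \Mon\psi \in \Gamma \,\}
  \qquad\text{and}\qquad
  V(p_i) = \tilde{p_i}.
\]
Because $\iM$ has no diamond, no unravelling of maximal points is required (contrast the construction in Definition~\ref{def:canon}), and the truth lemma $\psi \in \Gamma \iff \mo{M}, \Gamma \Vdash \psi$ is proved exactly as for $\WM$. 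For the modal step, $\Mon\psi \in \Gamma$ yields the witness $\tilde\psi \in \gamma(\Gamma')$ at every $\Gamma' \supseteq \Gamma$, while conversely a witnessing neighbourhood $\tilde\chi \subseteq \llb\psi\rrb^{\mo{M}}$ with $\Mon\chi \in \Gamma$ gives $\tilde\chi \subseteq \tilde\psi$, hence $\vdash_{\iM} \chi \to \psi$, whence $\Mon\psi \in \Gamma$ by $(\RMonB)$ read for $\Mon$. Combined with the Lindenbaum lemma, this produces a point of the canonical model separating $\Gamma$ from any $\phi$ with $\Gamma \not\vdash_{\iM} \phi$.

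The one new step, and the only place where fullness enters, is to observe that this canonical model is already full. The key point is that $\gamma$ is monotone with respect to $\subseteq$: if $\Gamma \subseteq \Gamma'$ and $\tilde\psi \in \gamma(\Gamma)$, then $\Mon\psi \in \Gamma \subseteq \Gamma'$, so $\tilde\psi \in \gamma(\Gamma')$; hence $\gamma(\Gamma) \subseteq \gamma(\Gamma')$. In particular, $\gamma(\Gamma) \neq \emptyset$ together with $\Gamma \subseteq \Gamma'$ forces $\gamma(\Gamma') \neq \emptyset$, which is precisely the fullness condition. Therefore the falsifying model produced above is full, and strong completeness of $\iM$ with respect to full constructive neighbourhood models follows.

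I expect the main (and essentially the only) technical care to lie in the modal direction of the truth lemma in the constructive box semantics, specifically in checking that a single witnessing neighbourhood $\tilde\chi \subseteq \llb\psi\rrb^{\mo{M}}$ suffices to conclude $\Mon\psi \in \Gamma$ via $(\RMonB)$ and the deduction theorem. Once this is secured, the fullness of the canonical model is immediate from the $\subseteq$-monotonicity of $\gamma$, so no separate model transformation is needed.
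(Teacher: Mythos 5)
Your proof is correct, but it takes a genuinely different route from the paper's. The paper treats Theorem~\ref{thm:compl-iM-WM} as a black box: given $\Gamma \not\vdash_{\iM} \phi$, it takes \emph{any} constructive neighbourhood model falsifying the consecution and repairs it, redefining $\gamma'(w)$ to be $\gamma(w)$ when every successor of $w$ has nonempty $\gamma$, and $\emptyset$ otherwise. The resulting model is full, and the interpretation of $\lan{L}_{\Mon}$-formulas is unchanged --- the point being that the constructive clause for $\Box$ already quantifies over all successors, so a world with an empty-$\gamma$ successor never satisfied any $\Mon\psi$ in the first place; diamond-freeness is exactly what makes the pruning harmless. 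You instead reopen the canonical model construction behind the $\iM$ half of Theorem~\ref{thm:compl-iM-WM} (which the paper only asserts as ``straightforward'') and observe that its neighbourhood function $\gamma(\Gamma) = \{\tilde\psi \mid \Mon\psi \in \Gamma\}$ is monotone under $\subseteq$, hence the canonical model is already full; your truth lemma, including the step from a witness $\tilde\chi \subseteq \tilde\psi$ to $\vdash_{\iM} \chi \to \psi$ to $\Mon\psi \in \Gamma$, is the standard argument and goes through (and you are right that no unravelling of maximal theories is needed, since that device in Definition~\ref{def:canon} exists only for the diamond clause). The two approaches carry comparable hidden work --- the paper must check truth-preservation of the pruning, you must check the truth lemma --- but they buy different things: the paper's argument is modular, needing no details of how Theorem~\ref{thm:compl-iM-WM} was proved, whereas yours proves slightly more, namely strong completeness of $\iM$ for the smaller class of models in which $w \preceq w'$ implies $\gamma(w) \subseteq \gamma(w')$ (the order-monotone semantics discussed in the remark following Theorem~\ref{thm:compl-iM-WM}), of which fullness is a weakening.
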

\begin{proof}
  Soundness follows from Theorem~\ref{thm:compl-iM-WM}.
  For completeness, suppose $\Gamma \not\vdash_{\iM} \phi$.
  Then by Theorem~\ref{thm:compl-iM-WM} there exists a constructive
  neighbourhood model $(W, \preceq, \gamma, V)$ falsifying the consecution.
  Defining $\gamma'$ by
  \begin{equation*}
    \gamma'(w)
      = \begin{cases}
          \gamma(w)
            &\text{if } \gamma(v) \neq \emptyset \text{ for all } v \in W 
                                                 \text{ such that } w \leq v \\
          \emptyset
            &\text{otherwise}
        \end{cases}
  \end{equation*}
  yields a full constructive neighbourhood model $(W, \preceq, \gamma', V)$
  that leaves the interpretation of $\lan{L}_{\Mon}$-formulas unchanged,
  hence gives a full model falsifying the consecution.
\end{proof}

  We can transform intuitionistic neighbourhood models into full
  constructive neighbourhood models as follows.

\begin{definition}\label{def:inm-cnm}
  Let $\mo{M} = (W, \leq, N, V)$ be an intuitionistic neighbourhood frame.
  For each $w \in W$, let $N_w = \{ a \in N \mid w \in \dom(a) \}$
  and ${\uparrow}w = \{ v \in W \mid w \leq v \}$, and define
  \begin{equation*}
    \ms{C}(w) := \big\{ \{ a(f(a)) \mid a \in N_w \} \mid f : N_w \to {\uparrow}w \text{ is a function} \big\}.
  \end{equation*}
  Let $\widehat{W} = \{ (w, \Sigma) \mid \Sigma \in \ms{C}(w) \}$
  and define $(w, \Sigma) \preceq (w', \Sigma')$ iff $w \leq w'$.
  Then $(\widehat{W}, \preceq)$ is a pre-ordered set.
  Define the neighbourhood function $\gamma$ by $\gamma(w, \Sigma) := \Sigma$,
  and the valuation $\widehat{V}$ by $\widehat{V}(p_i) = \{ (w, \Sigma) \in \widehat{W} \mid w \in V(p) \}$.
  Then
  \begin{equation*}
    \widehat{\mo{M}} := (\widehat{W}, \preceq, \gamma, \widehat{V})
  \end{equation*}
  is a constructive neighbourhood model.
\end{definition}

  By construction, $\widehat{\mo{M}}$ is a full constructive neighbourhood
  model. Moreover, we have:

\begin{lemma}\label{lem:inm-cnm}
  Let $\mo{M} = (W, \leq, N, V)$ be an intuitionistic neighbourhood model
  and $\widehat{\mo{M}} = (\widehat{W}, \preceq, \gamma, \widehat{V})$ its
  corresponding constructive neighbourhood model.
  Then for all $(w, \Sigma) \in \widehat{W}$ and $\phi \in \Lbd$,
  \begin{equation*}
    \widehat{\mo{M}}, (w, \Sigma) \Vdash \phi
      \iff \mo{M}, w \Vdash \phi.
  \end{equation*}
\end{lemma}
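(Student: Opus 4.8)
The plan is to argue by induction on the structure of $\phi$, with the governing observation that the second coordinate of a world never influences truth. Write $\llb\phi\rrb^{\mo{M}}=\{v\in W\mid \mo{M},v\Vdash\phi\}$ for the truth set in the intuitionistic neighbourhood model. The cases $\phi=\bot$ and $\phi=p_i$ are immediate from the definitions of $\widehat{V}$, and $\wedge,\vee$ are evaluated locally so they pass through the induction hypothesis directly. For $\phi=\psi\to\chi$ I would use that $(w,\Sigma)\preceq(w',\Sigma')$ holds iff $w\le w'$ together with the fact that $\ms{C}(w')$ is always nonempty (the empty choice function, or the constant function $f\equiv w'$, produces a witness), so the quantification over successors $(w',\Sigma')$ of $(w,\Sigma)$ collapses to quantification over $w'\ge w$, matching the clause for $\to$ in $\mo{M}$. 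Before treating the modalities I would record two structural facts: since every $\dom(a)$ is an upset, $w\le w'$ implies $N_w\subseteq N_{w'}$; and for $\Sigma'=\{\,a(f(a))\mid a\in N_{w'}\,\}\in\ms{C}(w')$ we have $\gamma(w',\Sigma')=\Sigma'$, so the constructive neighbourhoods of $(w',\Sigma')$ are exactly the sets $a(f(a))$.

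For $\phi=\Box\psi$, recall that $\widehat{\mo{M}},(w,\Sigma)\Vdash\Box\psi$ means that for every $(w',\Sigma')\succeq(w,\Sigma)$ there is some $b\in\Sigma'$ with $b\subseteq\llb\psi\rrb^{\mo{M}}$ (using the induction hypothesis), i.e.\ for every $w'\ge w$ and every $f:N_{w'}\to{\uparrow}w'$ there is an $a\in N_{w'}$ with $a(f(a))\subseteq\llb\psi\rrb^{\mo{M}}$. For the direction from $\mo{M}$ to $\widehat{\mo{M}}$, a witness $a_0\in N_w$ with $a_0(w'')\subseteq\llb\psi\rrb^{\mo{M}}$ for all $w''\ge w$ works uniformly: given any $(w',\Sigma')\succeq(w,\Sigma)$ we have $a_0\in N_{w'}$ and $f(a_0)\ge w'\ge w$, so $a_0(f(a_0))\subseteq\llb\psi\rrb^{\mo{M}}$ lies in $\Sigma'$. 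For the converse I would argue contrapositively: if $\mo{M},w\not\Vdash\Box\psi$ then for each $a\in N_w$ there is a $w_a\ge w$ with $a(w_a)\not\subseteq\llb\psi\rrb^{\mo{M}}$; setting $f(a)=w_a$ yields $\Sigma'=\{a(f(a))\mid a\in N_w\}\in\ms{C}(w)$ with $(w,\Sigma')\succeq(w,\Sigma)$ (here I exploit that $\preceq$ ignores the second coordinate, so we may stay at the level $w$) and no member of $\Sigma'$ contained in $\llb\psi\rrb^{\mo{M}}$, falsifying $\Box\psi$ at $(w,\Sigma)$.

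For $\phi=\Diamond\psi$ the computation is more symmetric. Unfolding, $\widehat{\mo{M}},(w,\Sigma)\Vdash\Diamond\psi$ says that for all $(w',\Sigma')\succeq(w,\Sigma)$ and all $b\in\Sigma'$ we have $b\cap\llb\psi\rrb^{\mo{M}}\neq\emptyset$. Since $b$ ranges over $a(f(a))$ as $a\in N_{w'}$ and $f$ ranges over all choice functions, and each value $f(a)$ may independently be any $t\ge w'$, this is equivalent to: for all $w'\ge w$, all $a\in N_{w'}$ and all $t\ge w'$, $a(t)\cap\llb\psi\rrb^{\mo{M}}\neq\emptyset$. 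On the other side, $\mo{M},w\Vdash\Diamond\psi$ reads: for all $u\ge w$ and all $a\in N_u$, $a(u)\cap\llb\psi\rrb^{\mo{M}}\neq\emptyset$. These coincide: from the latter, given $w',a,t$, apply it with $u=t$ (noting $a\in N_t$); from the former, given $u,a$, apply it with $w'=u$, $t=u$.

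I expect the main obstacle to be the converse direction of the $\Box$ case, where the semantics quantify in opposite orders (an $\exists$ inside a $\forall$ for $\widehat{\mo{M}}$ versus a single uniform $\exists$ for $\mo{M}$). The key that makes it go through is that falsifying $\Box\psi$ in $\widehat{\mo{M}}$ requires only one adversarial successor, which can be taken at the \emph{same} underlying world $w$ because $\preceq$ disregards the second coordinate, and the required bad $\Sigma'$ is assembled by a single choice function collecting one failing neighbourhood value per $a\in N_w$. Care must also be taken in correctly converting the universal quantifier over elements $\Sigma'$ of $\ms{C}(w')$ into an independent choice of $f(a)$ for each neighbourhood, which is what drives the $\Diamond$ equivalence.
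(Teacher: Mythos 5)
Your proof is correct and follows essentially the same route as the paper's: induction on $\phi$, with the $\Box$ case handled by a uniform witness in one direction and, contrapositively, by assembling an adversarial $\Sigma' \in \ms{C}(w)$ at the same underlying world $w$ via a choice function $f(a) = w_a$ in the other. In fact you supply details the paper elides as ``routine'' or ``immediate''---the nonemptiness of $\ms{C}(w')$ needed to collapse the successor quantification in the $\to$ case, and the full unfolding of the $\Diamond$ equivalence via independent choices of $f(a)$---so your write-up is, if anything, more complete.
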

\begin{proof}
  We use induction on the structure of $\phi$.
  The cases $\phi = \bot$ and $\phi = p_i \in \Prop$ hold by definition.
  The inductive cases for intuitionistic connectives are routine.
  
  \medskip\noindent
  \textit{Inductive step for $\phi = \Box\psi$.}
    If $\mo{M}, w \Vdash \Box\psi$ then there exists $a \in N$ such that
    $a(w') \subseteq \llb \psi \rrb^{\mo{M}}$ for all $w' \geq w$.
    For each $(w', \Sigma') \in \widehat{W}$ such that
    $(w, \Sigma) \preceq (w', \Sigma')$, we have $w \leq w'$
    hence $a \in N_{w'}$, so that $a(v) \in \Sigma' =: \gamma(w', \Sigma')$ for some
    $v \geq w'$. Then $w \leq v$, so by induction
    $a(v) \subseteq \llb \psi \rrb^{\widehat{\mo{M}}}$.
    This proves $\widehat{\mo{M}}, (w, \Sigma) \Vdash \Box\psi$.
    
    Conversely, if $\mo{M}, w \not\Vdash \Box\psi$ then for each
    $a \in N_w$ there exists some $v_a \geq w$ such that
    $a(v_a) \not\subseteq \llb \psi \rrb^{\mo{M}}$.
    But then $\Delta := \{ a(v_a) \mid a \in N_w \} \in \ms{C}(w)$,
    so $(w, \Sigma) \preceq (w, \Delta)$ and (by induction) there is no neighbourhood in
    $\gamma(w, \Delta) := \Delta$ that is contained in $\llb \psi \rrb^{\widehat{\mo{M}}}$.
    Therefore $\widehat{\mo{M}}, (w, \Sigma) \not\Vdash \Box\psi$.

  \medskip\noindent
  \textit{Inductive step for $\phi = \Diamond\psi$.}
    This follows immediately from the construction.
\end{proof}

\begin{theorem}\label{thm:IMf-cnm}
  The logic $\IMf$ is sound and strongly complete with respect to the class
  of full constructive neighbourhood models.
\end{theorem}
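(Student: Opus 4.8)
Since $\IMfC = \IMC(\{\ANega, \AInt\})$ shares all of its rules with $\WM = \IMC(\{\ANega\})$ and differs only by admitting substitution instances of $(\AInt)$ in the axiom rule, I would argue by induction on an $\IMfC$-derivation, mirroring the soundness half of Theorem~\ref{thm:compl-iM-WM}. The cases for $(\REl)$, $(\RMP)$, $(\RMonB)$ and $(\RMonD)$, together with the intuitionistic axioms and $(\ANega)$, go through unchanged, since these are valid on \emph{all} constructive neighbourhood models and the displayed rules preserve validity. The single new obligation is that every substitution instance of $(\AInt)$ is valid on full constructive neighbourhood models, which is precisely the observation recorded just before the definition of fullness: if $\mo{M}$ is full and $w \Vdash \Box\top \to \Diamond\phi$, then any $w' \geq w$ carrying a neighbourhood $a \in \gamma(w')$ has $\gamma(w'') \neq \emptyset$ for all $w'' \geq w'$ by fullness, so $w' \Vdash \Box\top$; the implication hypothesis then gives $w' \Vdash \Diamond\phi$, and evaluating $\Diamond\phi$ at $w'$ against $a$ produces a point of $a$ satisfying $\phi$. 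Hence $w \Vdash \Diamond\phi$, so $\IMfC$ is sound, and it coincides with $\IMf$ by Theorem~\ref{thm:IM-IMC}.

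\textbf{Strong completeness.} Here I would argue by contraposition, reducing to the intuitionistic neighbourhood semantics. Suppose $\Gamma \not\vdash_{\IMf} \phi$. By Theorem~\ref{thm:IM-IMC} this is equivalent to $\Gamma \not\Vdash^{\inm} \phi$, so there exist an intuitionistic neighbourhood model $\mo{M} = (W, \leq, N, V)$ and a world $w \in W$ with $\mo{M}, w \Vdash \Gamma$ and $\mo{M}, w \not\Vdash \phi$. I then pass to the associated full constructive neighbourhood model $\widehat{\mo{M}} = (\widehat{W}, \preceq, \gamma, \widehat{V})$ of Definition~\ref{def:inm-cnm}. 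Since $\ms{C}(w)$ is always nonempty---the empty function witnesses $\emptyset \in \ms{C}(w)$ when $N_w = \emptyset$, and otherwise any $f : N_w \to {\uparrow}w$ does, using $w \in {\uparrow}w$---there is at least one $\Sigma$ with $(w, \Sigma) \in \widehat{W}$. By Lemma~\ref{lem:inm-cnm} truth at $(w, \Sigma)$ agrees with truth at $w$ in $\mo{M}$ for every formula, whence $\widehat{\mo{M}}, (w, \Sigma) \Vdash \Gamma$ and $\widehat{\mo{M}}, (w, \Sigma) \not\Vdash \phi$. As $\widehat{\mo{M}}$ is a full constructive neighbourhood model, this falsifies $\Gamma \vdash \phi$ over the intended class, completing the contrapositive.

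\textbf{Main obstacle.} The heavy lifting has already been done in Lemma~\ref{lem:inm-cnm} (the truth-preserving passage from intuitionistic to full constructive neighbourhood models) and in Theorem~\ref{thm:IM-IMC} (completeness of $\IMfC$ over intuitionistic neighbourhood models). Given these, the present theorem is essentially a bookkeeping exercise: the only genuine checks are the validity of $(\AInt)$ under fullness for soundness and the nonemptiness of $\ms{C}(w)$ for completeness, both of which are immediate. I therefore expect no serious difficulty; the one mild subtlety is to confirm that \emph{strong} rather than merely weak completeness is inherited, which it is, because the equivalence in Theorem~\ref{thm:IM-IMC} and the model transformation in Lemma~\ref{lem:inm-cnm} both handle arbitrary sets $\Gamma$.
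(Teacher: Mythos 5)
Your proposal is correct and follows essentially the same route as the paper: soundness reduces to checking that full constructive neighbourhood models validate $(\AInt)$ (your verification matches the paper's verbatim in substance), and completeness goes by contraposition through Theorem~\ref{thm:IM-IMC}, Definition~\ref{def:inm-cnm} and Lemma~\ref{lem:inm-cnm}. Your additional remarks (the explicit induction framing via soundness of the $\WM$-rules, the nonemptiness of $\ms{C}(w)$, and the note that strong completeness is inherited) are details the paper leaves implicit, not a different argument.
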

\begin{proof}
  For soundness, we only have to verify that full constructive neighbourhood
  models satisfy ($\AInt$). To this end, let
  $\mo{M} = (W, \preceq, \gamma, V)$ be a constructive neighbourhood model
  and suppose $w \in W$ satisfies $\Box\top \to \Diamond\phi$.
  To see that $w \Vdash \Diamond\phi$, let $w' \geq w$ and $a \in \gamma(w')$.
  Then $\gamma(w') \neq \emptyset$, hence by fullness $v \geq w'$ implies
  $\gamma(v) \neq \emptyset$, which in turn implies $w' \Vdash \Box\top$.
  Since $w \Vdash \Box\top \to \Diamond\phi$ and $w \leq w'$,
  we find $w' \Vdash \Diamond\phi$, so by definition there must be some
  $u \in a$ such that $u \Vdash \phi$, as desired.
  
  For completeness, suppose $\Gamma \not\vdash_{\IMf} \phi$.
  Then by Theorem~\ref{thm:IM-IMC} there exists an intuitionistic
  neighbourhood model falsifying the consecution, so that
  Definition~\ref{def:inm-cnm} and Lemma~\ref{lem:inm-cnm}
  yield a full constructive neighbourhood model falsifying it.
\end{proof}

  We can use this theorem to investigate the embedding of
  $\lan{L}_{\Mon}$ into $\Lbd$ that sends $\Mon$ to $\Box$.
  For $\phi \in \lan{L}_{\Mon}$, write $\phi^{\Box}$ for the formula in $\Lbd$
  obtained by replacing each occurrence of $\Mon$ with $\Box$.
  Extend this to $\Gamma \subseteq \lan{L}_{\Mon}$ by defining
  $\Gamma^{\Box} = \{ \phi^{\Box} \mid \phi \in \Gamma \}$.
  Note that every diamond-free consecution of formulas in $\Lbd$ can
  be viewed as a consecution of the form $\Gamma^{\Box} \vdash \phi^{\Box}$.
  Therefore the following theorem can be read as saying that the logics
  $\iM$, $\WM$ and $\IMf$ prove precisely the same diamond-free consecutions.

\begin{theorem}\label{thm:diamond-free}
  Let $\Gamma \cup \{ \phi \} \subseteq \lan{L}_{\Mon}$ be a collection
  of formulas. Then
  \begin{equation*}
    \Gamma \vdash_{\iM} \phi
      \iff \Gamma_{\Box} \vdash_{\WM} \phi^{\Box}
      \iff \Gamma_{\Box} \vdash_{\IMf} \phi^{\Box}.
  \end{equation*}
\end{theorem}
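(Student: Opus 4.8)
The plan is to reduce all three derivability relations to semantic consequence over suitable (sub)classes of constructive neighbourhood models via the completeness results already at hand, and then to exploit that the truth conditions of a diamond-free formula never invoke the diamond clause. The key preliminary observation is that $\iM$ interprets $\Mon$ exactly as $\Box$ is interpreted in a constructive neighbourhood model, so for any such model $\mo{M}$, any world $w$, and any $\phi \in \lan{L}_{\Mon}$ we have $\mo{M}, w \Vdash \phi$ iff $\mo{M}, w \Vdash \phi^{\Box}$ (a trivial induction, since $\phi^{\Box}$ is diamond-free). Consequently $\mo{M}$ validates the consecution $\Gamma \vdash \phi$ precisely when it validates $\Gamma^{\Box} \vdash \phi^{\Box}$.

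First I would settle the equivalence $\Gamma \vdash_{\iM} \phi \iff \Gamma^{\Box} \vdash_{\IMf} \phi^{\Box}$. By Proposition~\ref{prop:compl-iM}, $\iM$ is sound and strongly complete with respect to the class of \emph{full} constructive neighbourhood models, so $\Gamma \vdash_{\iM} \phi$ holds iff every full model validates $\Gamma \vdash \phi$, which by the observation above is the same as every full model validating $\Gamma^{\Box} \vdash \phi^{\Box}$. By Theorem~\ref{thm:IMf-cnm} the latter holds iff $\Gamma^{\Box} \vdash_{\IMf} \phi^{\Box}$. This yields the outer equivalence immediately, since both $\iM$ and $\IMf$ are complete with respect to the \emph{same} class of models and the relevant formulas are diamond-free.

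It remains to connect $\WM$. Here I use Theorem~\ref{thm:compl-iM-WM}: $\WM$ is sound and strongly complete with respect to the class of \emph{all} constructive neighbourhood models. Thus $\Gamma^{\Box} \vdash_{\WM} \phi^{\Box}$ holds iff every constructive neighbourhood model validates $\Gamma^{\Box} \vdash \phi^{\Box}$. Since full models form a subclass, this at once implies validity over all full models, hence $\Gamma \vdash_{\iM} \phi$ by the previous paragraph. For the converse I must show that validity over all full models implies validity over all constructive neighbourhood models, for a diamond-free consecution. To this end I invoke the \emph{fullification} from the proof of Proposition~\ref{prop:compl-iM}: given any constructive neighbourhood model $(W, \preceq, \gamma, V)$ that falsifies $\Gamma^{\Box} \vdash \phi^{\Box}$, replacing $\gamma$ by $\gamma'$ (setting $\gamma'(w) = \gamma(w)$ when $\gamma(v) \neq \emptyset$ for all $v \geq w$, and $\gamma'(w) = \emptyset$ otherwise) produces a full model that assigns every diamond-free formula the same truth value at every world, and hence still falsifies the consecution. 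Contrapositively, validity over full models entails validity over all models, completing the $\WM$-equivalence.

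The main obstacle is the verification that the fullification $\gamma \mapsto \gamma'$ leaves the truth of every diamond-free formula unchanged, in particular that it does not disturb the interpretation of $\Box$. This is precisely the content underlying Proposition~\ref{prop:compl-iM}, where it is already argued that the modification preserves the interpretation of $\lan{L}_{\Mon}$-formulas: the point is that $w \Vdash \Box\psi$ already forces $\gamma(v) \neq \emptyset$ for every $v \geq w$, so on the cone above $w$ the functions $\gamma$ and $\gamma'$ agree, and the induction on formula structure goes through. Assembling the two equivalences then gives the displayed triple equivalence.
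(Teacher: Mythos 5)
Your proof is correct, but it takes a different route from the paper's for half of the argument. The paper handles the direction $\Gamma \vdash_{\iM} \phi \Rightarrow \Gamma^{\Box} \vdash_{\WM} \phi^{\Box}$ (and likewise for $\IMf$) purely syntactically: a derivation in the generalised Hilbert calculus for $\iM$ becomes, upon replacing each use of $(\RMonM)$ by $(\RMonB)$, literally a derivation in $\WM$ and in $\IMfC$, since $\iM$'s calculus is a subcalculus of both under the renaming $\Mon \mapsto \Box$. You instead make this direction semantic: you observe that $\iM$ and $\IMf$ are sound and strongly complete with respect to the \emph{same} class of models (the full constructive neighbourhood models, by Proposition~\ref{prop:compl-iM} and Theorem~\ref{thm:IMf-cnm}), which settles the outer equivalence immediately, and you connect $\WM$ by showing that the fullification $\gamma \mapsto \gamma'$ preserves the truth of diamond-free formulas, so that validity over full models and validity over all constructive models coincide on the diamond-free fragment. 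For the converse direction your argument and the paper's coincide in spirit: both extract a falsifying \emph{full} model from $\Gamma \not\vdash_{\iM} \phi$ via Proposition~\ref{prop:compl-iM} and then invoke soundness of $\WM$ and $\IMf$ (Theorems~\ref{thm:compl-iM-WM} and~\ref{thm:IMf-cnm}). The trade-off: the paper's syntactic step is essentially free, whereas your route must lean on the truth-invariance of the fullification; this is legitimate, since that invariance is exactly what the paper asserts and uses inside the proof of Proposition~\ref{prop:compl-iM} (and your cone argument for why $\gamma$ and $\gamma'$ agree above any world forcing $\Box\psi$ is the right justification). What your approach buys is a uniform, purely model-theoretic proof that makes explicit \emph{why} the three logics agree on diamond-free consecutions, namely that their completeness classes collapse to the same class on that fragment; what it costs is redoing semantically a direction that a one-line inspection of the proof systems already gives.
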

\begin{proof}
  Any proof $\delta$ in the generalised Hilbert calculus for $\iM$
  for $\Gamma\vdash_{\iM} \phi$ gives rise to a proof for
  $\Gamma^{\Box} \vdash_{\WM} \phi^{\Box}$
  and for $\Gamma^{\Box} \vdash_{\IMf} \phi^{\Box}$
  by simply replacing the use of $(\RMonM)$ by $(\RMonB)$.
  Conversely,
  if $\Gamma \not\vdash_{\iM} \phi$ then by Proposition~\ref{prop:compl-iM}
  there exist a full constructive neighbourhood model $\mo{M} = (W, \preceq, \gamma, V)$
  falsifying the consecution, so
  Theorems~\ref{thm:compl-iM-WM} and~\ref{thm:IMf-cnm} entail
  $\Gamma \not\vdash_{\WM} \phi$ and $\Gamma \not\vdash_{\IMf} \phi$.
\end{proof}

  We complete this section by investigating the relation between $\iM$ and
  the box-free fragments of $\WM$ and $\IMf$.
  For $\phi \in \lan{L}_{\Mon}$ and $\Gamma \subseteq \lan{L}_{\Mon}$,
  define $\phi^{\Diamond}$ and $\Gamma^{\Diamond}$ as expected.
  Then the next example and lemma show that there are consecutions $\Gamma \vdash \phi$
  in $\lan{L}_{\Mon}$ that are not derivable in $\iM$, but whose
  translation is derivable in $\IMf$.
  
\begin{example}\label{exm:iM-IM}
  Let $W = \{ w, v \}$ be a two-element set, and order it
  by $w \preceq w \preceq v \preceq v$. Define the neighbourhood function
  $\gamma : W \to \fun{PP}(W)$ by $\gamma(w) = \emptyset$ and $\gamma(v) = \fun{P}(W)$,
  and let $V$ be any valuation.
  Then $\mo{M} = (W, \preceq, \gamma, V)$ is a constructive neighbourhood model and we have
  $\mo{M}, w \not\Vdash \Mon\top$ because $\gamma(w)$ contains no neighbourhoods.
  Moreover, we have $\mo{M}, v \Vdash \Mon\bot$ because
  $\llb \bot \rrb^{\mo{M}} = \emptyset \in \gamma(v)$.
  The latter implies $\mo{M}, w \not\Vdash \neg\Mon\bot$ and
  $\mo{M}, v \not\Vdash \neg\Mon\bot$, and hence we trivially find
  $\mo{M}, w \Vdash \neg\Mon\bot \to \Mon\top$.
  Ultimately, this gives
  \begin{equation*}
    \mo{M}, w \not\Vdash (\neg\Mon\bot \to \Mon\top) \to \Mon\top.
  \end{equation*}
  It now follows from Theorem~\ref{thm:compl-iM-WM} that
  $\not\vdash_{\iM} (\neg\Mon\bot \to \Mon\top) \to \Mon\top$.
  On the other hand, it is not hard to see that $\IMf$ proves
  $(\neg\Diamond\bot \to \Diamond\top) \to \Diamond\top$.
  So we obtain the following theorem.
\end{example}

\begin{theorem}
  Let $\Gamma \cup \{ \phi \} \subseteq \lan{L}_{\Mon}$. Then
  \begin{alignat*}{2}
    \Gamma \vdash_{\iM} \phi
      &\quad\iff
      &&\Gamma^{\Diamond} \vdash_{\WM} \phi^{\Diamond}, \\
    \Gamma \vdash_{\iM} \phi
      &\quad\text{implies}\quad
      &&\Gamma^{\Diamond} \vdash_{\IMf} \phi^{\Diamond},
  \end{alignat*}
  and the implication is strict.
\end{theorem}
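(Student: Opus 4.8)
The plan is to prove the two forward implications by translating proofs, the converse half of the $\WM$-equivalence by a model transformation that converts the box reading into the diamond reading, and strictness by combining Example~\ref{exm:iM-IM} with a short derivation in $\IMf$. I first dispatch both forward implications $\Gamma\vdash_{\iM}\phi\Rightarrow\Gamma^{\Diamond}\vdash_{\WM}\phi^{\Diamond}$ and $\Gamma\vdash_{\iM}\phi\Rightarrow\Gamma^{\Diamond}\vdash_{\IMf}\phi^{\Diamond}$ exactly as in Theorem~\ref{thm:diamond-free}: apply $(\cdot)^{\Diamond}$ to every consecution in a given $\iM$-derivation. Intuitionistic axioms and the rules $(\REl)$, $(\RAx)$, $(\RMP)$ are preserved, while each use of the monotonicity rule $(\RMonM)$ for $\Mon$ becomes a use of $(\RMonD)$ for $\Diamond$; since $\WM$ and $\IMf$ both contain $(\RMonD)$ over the intuitionistic base, the translated derivation is valid in each. (The $\IMf$ implication also follows from the $\WM$ one, as every $\WM$-derivation is an $\IMf$-derivation.)

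For the converse of the first equivalence I argue contrapositively. If $\Gamma\not\vdash_{\iM}\phi$, then by the strong completeness of $\iM$ under the box reading (Theorem~\ref{thm:compl-iM-WM}) there is a constructive neighbourhood model $\mo{M}=(W,\preceq,\gamma,V)$ and a world $w$ with $\mo{M},w\Vdash\Gamma^{\Box}$ and $\mo{M},w\not\Vdash\phi^{\Box}$. I pass to $\mo{M}^{\partial}=(W,\preceq,\gamma^{\partial},V)$ with the same order and valuation but $\gamma^{\partial}(u)=\big\{\,\{f(a)\mid a\in\gamma(u)\}\;\big|\;f\colon\gamma(u)\to W\text{ with }f(a)\in a\text{ for all }a\,\big\}$. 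The crux is the distributive identity that, for every $X\subseteq W$ and $u\in W$,
\begin{equation*}
  (\exists a\in\gamma(u))(a\subseteq X)\iff(\forall b\in\gamma^{\partial}(u))(b\cap X\neq\emptyset).
\end{equation*}
Left to right is immediate, since a witness $a_{0}\subseteq X$ meets every choice set through $f(a_{0})$; right to left follows contrapositively, by choosing—when no $a$ lies inside $X$—a point of each $a$ outside $X$, which produces a $b\in\gamma^{\partial}(u)$ disjoint from $X$ (if $\emptyset\in\gamma(u)$ both sides hold trivially, as then $\gamma^{\partial}(u)=\emptyset$). Feeding this identity into a routine induction on $\phi\in\lan{L}_{\Mon}$ gives $\mo{M},u\Vdash\phi^{\Box}\iff\mo{M}^{\partial},u\Vdash\phi^{\Diamond}$ at every world, so $\mo{M}^{\partial},w\Vdash\Gamma^{\Diamond}$ while $\mo{M}^{\partial},w\not\Vdash\phi^{\Diamond}$. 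As these formulas are box-free, their $\WM$-interpretation uses only the diamond clause, so $\mo{M}^{\partial}$ refutes $\Gamma^{\Diamond}\vdash\phi^{\Diamond}$ and soundness of $\WM$ yields $\Gamma^{\Diamond}\not\vdash_{\WM}\phi^{\Diamond}$. Arranging $\gamma^{\partial}$ so that this equivalence survives the outer quantifier ``$\forall u'\geq u$'' of the constructive clauses and the nesting of $\Mon$ is the main obstacle of the proof.

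Finally, strictness of the $\IMf$-implication is witnessed by $\Gamma=\emptyset$ and $\phi=(\neg\Mon\bot\to\Mon\top)\to\Mon\top$. Example~\ref{exm:iM-IM} already gives $\not\vdash_{\iM}\phi$, so it remains to derive $\phi^{\Diamond}=(\neg\Diamond\bot\to\Diamond\top)\to\Diamond\top$ in $\IMf$. Instantiating $(\ANega)$ at $\top$, and using $\vdash\neg\top\leftrightarrow\bot$ with $(\RMonD)$ to replace $\Diamond\neg\top$ by $\Diamond\bot$, yields $\vdash_{\IMf}\Box\top\to\neg\Diamond\bot$. Composing with the hypothesis $\neg\Diamond\bot\to\Diamond\top$ gives $\Box\top\to\Diamond\top$, and then $(\AInt)$ at $\top$, i.e.\ $(\Box\top\to\Diamond\top)\to\Diamond\top$, delivers $\Diamond\top$ by $(\RMP)$. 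The deduction theorem discharges the hypothesis to give $\vdash_{\IMf}\phi^{\Diamond}$. Thus $\emptyset\vdash_{\IMf}\phi^{\Diamond}$ although $\not\vdash_{\iM}\phi$, so the second implication of the theorem cannot be reversed.
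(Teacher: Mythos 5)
Your proof is correct, and its heart --- the right-to-left direction of the $\WM$-equivalence --- takes a genuinely different route from the paper. The paper's own proof is purely syntactic and very short: it observes that the proof system of $\WM$ is symmetric under exchanging $\Box$ and $\Diamond$ (the exchanged axiom $(\Diamond p\wedge\Box\neg p)\to\bot$ follows from $(\ANega)$ by substituting $\neg p$ for $p$ and applying $(\RMonD)$ to $p\to\neg\neg p$, a small lemma the paper leaves implicit), so that $\Gamma^{\Diamond}\vdash_{\WM}\phi^{\Diamond}$ iff $\Gamma^{\Box}\vdash_{\WM}\phi^{\Box}$, and the latter is equivalent to $\Gamma\vdash_{\iM}\phi$ by Theorem~\ref{thm:diamond-free}; the strictness claim is delegated to Example~\ref{exm:iM-IM}. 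You instead prove a semantic duality: the transversal construction $\gamma^{\partial}$ converts a constructive neighbourhood countermodel for the box reading into one for the diamond reading. Your distributive identity and its edge cases ($\emptyset\in\gamma(u)$, and implicitly $\gamma(u)=\emptyset$) are handled correctly, and the induction does go through; in fact the ``main obstacle'' you flag at the end is no obstacle at all, because $\gamma^{\partial}$ is defined worldwise, the two models share carrier, order and valuation, and both modal clauses carry the same outer quantifier over $u'\geq u$, so the pointwise identity lifts to the clauses verbatim. Comparing the two: the paper's argument is shorter and recycles Theorem~\ref{thm:diamond-free}, but buries the syntactic symmetry lemma; yours avoids the symmetry argument entirely, makes the $\Box$--$\Diamond$ duality of constructive neighbourhood semantics explicit (at the cost of the axiom of choice when picking transversals), and, as a bonus, supplies the explicit $\IMfC$-derivation of $(\neg\Diamond\bot\to\Diamond\top)\to\Diamond\top$ --- via $(\ANega)$ at $\top$, $(\AInt)$ at $\top$, and the deduction theorem, with the monotonicity rule correctly applied only under empty context --- where the paper's Example~\ref{exm:iM-IM} merely asserts derivability.
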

\begin{proof}
  Due to the symmetry of the proof system for $\WM$, we have
  $\Gamma^{\Diamond} \vdash_{\WM} \phi^{\Diamond}$ if and only if
  $\Gamma^{\Box} \vdash_{\WM} \phi^{\Box}$.
  Together with Theorem~\ref{thm:diamond-free}, this entails the ``iff.''
  The implication follows from the fact that the proof system for
  $\IMf$ extends that for $\WM$, and the strictness is
  witnessed by Example~\ref{exm:iM-IM}.
\end{proof}

\subsection{Translating $\IMf$ into multimodal $\IK$}

  The way we defined $\IMf$ via first-order logic is closely aligned to the
  procedure used by Simpson to obtain $\IK$ via a suitable
  first-order logic~\cite{Sim94}.
  As a consequence, $\IMf$ and $\IK$ are strongly related:
  intuitionistic neighbourhood frames give rise to frames
  for a multimodal version $\IK_2$ of $\IK$, and we can embed $\IMf$ into $\IK_2$.
  We prove that this embedding preserves and reflects derivability,
  in the sense that an $\Lbd$-consecution $\Gamma \vdash \phi$ is in $\IMf$
  if and only if its translation is in $\IK_2$.
  This mirrors the relation between the classical counterparts $\M$ and $\K$
  of $\IMf$ and $\IK$, investigated in~\cite[Section~5.2]{Han03} and in more
  generality in~\cite{GasHer96,KraWol99}.

  Let $\lan{L}_2$ be the language generated by the grammar
  \begin{equation*}
    \phi ::= p_i \mid \bot \mid \phi \wedge \phi \mid \phi \vee \phi \mid \phi \to \phi
      \mid \Box_N\phi \mid \Diamond_N\phi \mid \Box_{\ni}\phi \mid \Diamond_{\ni}\phi,
  \end{equation*}
  where $p_i$ ranges over $\Prop$.
  Let $\IK_2$ be the multimodal counterpart of $\IK$ over this language
  (see Appendix~\ref{app:IK} for details).
  Then we can translate $\Lbd$ into $\lan{L}_2$ as follows.

\begin{definition}\label{def:trans}
  The modal translation $(-)^t : \Lbd \to \lan{L}_2$ is defined recursively by
  \begin{align*}
    p^t &= p
      &\text{(for $p \in \Prop \cup \{ \bot \}$)} \\
    (\phi \star \psi)^t &= \phi^t \star \psi^t
      &\text{(for $\star \in \{ \wedge, \vee, \to \}$)} \\
    (\Box\phi)^t &= \DiamondN\BoxI \phi^t \\
    (\Diamond\phi)^t &= \BoxN\DiamondI \phi^t
  \end{align*}
\end{definition}

  The idea behind this translation is best understood semantically.
  Roughly speaking, the two pairs of modalities in $\IK_2$ are interpreted
  using relations $R_N$ and $R_{\ni}$, and these mimic the interpretations
  of the predicates $\N$ and $\E$ from the logic $\IFOM$.
  We make this intuition precise in Definition~\ref{def:IM-IK2-sem} and the
  subsequent lemma.
  First, we show that if $\Gamma \vdash \phi$ is derivable in $\IMf$,
  then its translation $\Gamma^t \vdash \phi^t$,
  where $\Gamma^t = \{ \psi^t \mid \psi \in \Gamma \}$,
  is derivable in $\IK_2$.

\begin{lemma}\label{lem:IKJ-mon}
  For each $j \in \{ N, \ni \}$, the rules
  \begin{equation*}
    \dfrac{\emptyset \vdash \phi \wedge \psi \to \bot}
          {\Gamma \vdash \Box_j\phi \wedge \Diamond_j\psi \to \bot} \; (\RStr_{j})
    \qquad
    \dfrac{\emptyset \vdash \phi \to \psi}
          {\Gamma \vdash \Box_j\phi \to \Box_j\psi} \; (\RMonB_{,j})
    \qquad\text{and}\qquad
    \dfrac{\emptyset \vdash \phi \to \psi}
          {\Gamma \vdash \Diamond_j\phi \to \Diamond_j\psi} \; (\RMonD_{,j}).
  \end{equation*}
  are derivable in $\IK_2$.
\end{lemma}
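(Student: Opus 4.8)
The plan is to reduce all three rules to the standard normal-modal apparatus that $\IK_2$ carries for each index $j$ (see Appendix~\ref{app:IK}): the necessitation rule $(\RNec)$ for $\Box_j$, the distribution axiom $\Box_j(\phi \to \psi) \to (\Box_j\phi \to \Box_j\psi)$, the ``box-over-diamond'' axiom $\Box_j(\phi \to \psi) \to (\Diamond_j\phi \to \Diamond_j\psi)$, and the axiom $\neg\Diamond_j\bot$. Since all of these are available uniformly for $j = N$ and $j = {\ni}$, the derivation is identical in both cases, so I will argue for a generic $j$. Throughout I rely on the routine fact that $\IK_2$-theorems weaken, i.e.\ $\emptyset \vdash_{\IK_2} \chi$ entails $\Gamma \vdash_{\IK_2} \chi$ for every $\Gamma$; this follows by an immediate induction on derivations, since the axiom rule $(\RAx)$ and the necessitation rule $(\RNec)$ already permit an arbitrary context in their conclusions, while $(\REl)$ is never invoked in a derivation from the empty context.

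I would first dispatch the two monotonicity rules, which are immediate. Given $\emptyset \vdash \phi \to \psi$, necessitation yields $\emptyset \vdash \Box_j(\phi \to \psi)$; combining this with the distribution axiom via $(\RMP)$ gives $\emptyset \vdash \Box_j\phi \to \Box_j\psi$, and weakening produces $(\RMonB_{,j})$. Identically, feeding $\emptyset \vdash \Box_j(\phi \to \psi)$ into the box-over-diamond axiom and applying $(\RMP)$ gives $\emptyset \vdash \Diamond_j\phi \to \Diamond_j\psi$, hence $(\RMonD_{,j})$.

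The rule $(\RStr_{j})$ is the only one requiring genuine interaction between $\Box_j$ and $\Diamond_j$, and I would handle it in two stages. First I would establish, independently of the premise, the $\IK_2$-theorem
\begin{equation*}
  \emptyset \vdash \Box_j\phi \wedge \Diamond_j\psi \to \Diamond_j(\phi \wedge \psi).
\end{equation*}
Starting from the intuitionistic tautology $\phi \to (\psi \to (\phi \wedge \psi))$, necessitation together with the distribution axiom gives $\emptyset \vdash \Box_j\phi \to \Box_j(\psi \to (\phi \wedge \psi))$, and chaining this with the box-over-diamond axiom instantiated at $\psi$ and $\phi \wedge \psi$ yields $\emptyset \vdash \Box_j\phi \to (\Diamond_j\psi \to \Diamond_j(\phi \wedge \psi))$, which is the displayed theorem after currying. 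Second, given the premise $\emptyset \vdash \phi \wedge \psi \to \bot$, I would apply the already-derived rule $(\RMonD_{,j})$ to obtain $\emptyset \vdash \Diamond_j(\phi \wedge \psi) \to \Diamond_j\bot$, and then compose with $\neg\Diamond_j\bot$ to get $\emptyset \vdash \Diamond_j(\phi \wedge \psi) \to \bot$. Transitivity of implication with the theorem above delivers $\emptyset \vdash \Box_j\phi \wedge \Diamond_j\psi \to \bot$, and weakening gives $(\RStr_{j})$.

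The argument is essentially routine modal bookkeeping; the only non-mechanical point is the auxiliary theorem $\Box_j\phi \wedge \Diamond_j\psi \to \Diamond_j(\phi \wedge \psi)$, whose derivation is exactly where the box-over-diamond axiom is needed, after which $(\RStr_{j})$ reduces to diamond monotonicity together with $\neg\Diamond_j\bot$. The main thing to be careful about is that the chosen axiomatisation of $\IK_2$ in Appendix~\ref{app:IK} indeed supplies these axioms and the necessitation rule \emph{separately} for each of the two indices $N$ and $\ni$; granting that, no further subtlety arises.
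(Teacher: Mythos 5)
Your proof is correct and rests on exactly the same ingredients as the paper's: necessitation, the axioms ($\AKbox$), ($\AKdia$) and ($\ANdia$), modus ponens and intuitionistic propositional reasoning, with the two monotonicity rules handled identically (and your explicit weakening argument is a legitimate way to manage the context $\Gamma$, which the paper instead absorbs by applying necessitation and the axiom rule with conclusion context $\Gamma$ directly). The only difference is a cosmetic re-factoring of $(\RStr_j)$: the paper necessitates the curried premise $\phi \to (\psi \to \bot)$ and pushes it through ($\AKbox$) and ($\AKdia$), whereas you first establish the premise-independent theorem $\Box_j\phi \wedge \Diamond_j\psi \to \Diamond_j(\phi \wedge \psi)$ and only then bring in the premise via $(\RMonD_{,j})$ and ($\ANdia$) --- the same derivation skeleton with the premise applied at the end rather than at the start.
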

\begin{proof}
  Suppose $\emptyset \vdash \phi \wedge \psi \to \bot$.
  Then $\vdash \phi \to (\psi \to \bot)$.
  Necessitation gives $\Gamma \vdash \Box_j(\phi \to (\psi \to \bot))$,
  and using the axiom $(\AKbox)$ and modus ponens we find
  $\Gamma \vdash \Box_j\phi \to \Box_j(\psi \to \bot)$.
  Now, using ($\AKdia)$, modus ponens and intuitionistic propositional reasoning
  yields $\Gamma \vdash \Box_j\phi \to (\Diamond_j\psi \to \Diamond_j\bot)$.
  The axiom $(\ANdia)$ and propositional reasoning then yields
  $\Gamma \vdash \Box_j\phi \to (\Diamond_j\psi \to \bot)$
  so that currying gives the required conclusion of $(\RStr_j)$.
  
  For the monotonicity rules, combine necessitation, modus ponens,
  ($\AKbox$) and ($\AKdia)$.
\end{proof}

\begin{lemma}\label{lem:trans-ax}
  The axioms $(\ANega^t)$ and $(\AInt^t)$ are derivable in $\IK_2$.
\end{lemma}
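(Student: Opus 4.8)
The plan is to first spell out the two formulas to be derived and then give a short derivation of each from the rules of Lemma~\ref{lem:IKJ-mon} together with the Fischer Servi--style interaction axiom of $\IK_2$. Unfolding Definition~\ref{def:trans} (and using that $(\neg p)^t = \neg p$ and $\top^t = \top$), the two translated axioms are
\[
  (\ANega^t) = \big(\DiamondN\BoxI p \wedge \BoxN\DiamondI\neg p\big) \to \bot
  \qquad\text{and}\qquad
  (\AInt^t) = \big(\DiamondN\BoxI\top \to \BoxN\DiamondI p\big) \to \BoxN\DiamondI p .
\]

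For $(\ANega^t)$ I would exploit that the rule $(\RStr_j)$ of Lemma~\ref{lem:IKJ-mon} is available for both $j \in \{N, \ni\}$, applying it once for each modality pair. Starting from the intuitionistic tautology $\emptyset \vdash p \wedge \neg p \to \bot$, the rule $(\RStr_{\ni})$ yields $\emptyset \vdash \BoxI p \wedge \DiamondI\neg p \to \bot$. Reading this (up to commutativity of $\wedge$, which is available by propositional reasoning) as $\emptyset \vdash \DiamondI\neg p \wedge \BoxI p \to \bot$ and feeding it as the premise of $(\RStr_N)$ with $\phi = \DiamondI\neg p$ and $\psi = \BoxI p$, I obtain $\emptyset \vdash \BoxN\DiamondI\neg p \wedge \DiamondN\BoxI p \to \bot$; commuting the conjunction once more gives exactly $(\ANega^t)$. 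This case is routine.

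The interesting case is $(\AInt^t)$, which I expect to be the main obstacle, since it is precisely where the strength of $\IK$ over $\log{WK}$ enters, namely through the Fischer Servi interaction axiom $(\ADualI)$, i.e.~$(\Diamond_j\phi \to \Box_j\psi) \to \Box_j(\phi \to \psi)$. Instantiating $(\ADualI)$ at $j = N$, $\phi = \BoxI\top$ and $\psi = \DiamondI p$ gives
\[
  \big(\DiamondN\BoxI\top \to \BoxN\DiamondI p\big) \to \BoxN\big(\BoxI\top \to \DiamondI p\big),
\]
whose antecedent is the antecedent of $(\AInt^t)$. It then remains to collapse $\BoxN(\BoxI\top \to \DiamondI p)$ to $\BoxN\DiamondI p$. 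The key observation is that $\BoxI\top$ is a theorem: $\emptyset \vdash \top$ and necessitation $(\RNec)$ give $\emptyset \vdash \BoxI\top$, whence $\emptyset \vdash (\BoxI\top \to \DiamondI p) \to \DiamondI p$ by propositional reasoning. Applying the box-monotonicity rule $(\RMonB_{,N})$ to this yields $\emptyset \vdash \BoxN(\BoxI\top \to \DiamondI p) \to \BoxN\DiamondI p$, and chaining it with the instance of $(\ADualI)$ above produces $(\AInt^t)$. The only point needing care is that the calculus for $\IK_2$ really does contain $(\ADualI)$ in this direction; granting the axiomatisation recalled in Appendix~\ref{app:IK}, the derivation goes through.
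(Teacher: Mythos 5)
Your proof is correct and takes essentially the same route as the paper: $(\ANega^t)$ is obtained exactly as the paper indicates, by chaining $(\RStr_{\ni})$ and $(\RStr_N)$, and $(\AInt^t)$ hinges on the same key ingredient, an instance of the Fischer Servi axiom $(\ADualI)$ for the $N$-modalities. The only cosmetic difference is that you instantiate $(\ADualI)$ with $\BoxI\top$ directly and then collapse the consequent using $\vdash \BoxI\top$ (via necessitation) and $\BoxN$-monotonicity, whereas the paper instantiates with $\top$ and afterwards rewrites via $\BoxI\top \leftrightarrow \top$ and $(\top \to \DiamondI p) \leftrightarrow \DiamondI p$; both variants are equally valid.
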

\begin{proof}
  The former can be proven using $(\RStr_{\ni})$ and $(\RStr_N)$, which we have in $\IK_2$ by
  Lemma~\ref{lem:IKJ-mon}.
  For the second, instantiating $(\ADualI)$ for $\Box_N$ and $\Diamond_N$ with
  $p = \top$ and $q = \Diamond_{\ni}\psi$ gives
  \begin{equation*}
    (\Diamond_N \top \to \Box_N\Diamond_{\ni}\psi) \to \Box_N(\top \to \Diamond_{\ni}\psi).
  \end{equation*}
  Now we can use $\Box_{\ni}\top \leftrightarrow \top$
  and $(\top \to \Diamond_{\ni}\psi) \leftrightarrow \Diamond_{\ni}\psi$
  to transform this to
  $(\Diamond_N\Box_{\ni}\top \to \Box_N\Diamond_{\ni}\psi) \to \Box_N\Diamond_{\ni}\psi$,
  which is the translation of $(\AInt)$.
\end{proof}

\begin{proposition}\label{prop:IM-IK2}
  Let $\Gamma \cup \{ \phi \} \subseteq \Lbd$ be a set of formulas.
  Then $\Gamma \vdash_{\IMf} \phi$ implies $\Gamma^t \vdash_{\IK_2} \phi^t$.
\end{proposition}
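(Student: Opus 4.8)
The plan is to reduce to the calculus $\IMfC$ and then induct on derivations. By Theorem~\ref{thm:IM-IMC} we have $\Gamma \vdash_{\IMf} \phi$ if and only if $\Gamma \vdash_{\IMfC} \phi$, so it suffices to show that $\Gamma \vdash_{\IMfC} \phi$ implies $\Gamma^t \vdash_{\IK_2} \phi^t$. I would prove this by induction on a derivation $\delta$ of $\Gamma \vdash \phi$ in the generalised Hilbert calculus defining $\IMfC$, treating the five possible final rules $(\REl)$, $(\RAx)$, $(\RMP)$, $(\RMonB)$, $(\RMonD)$, just as in the soundness proof of Theorem~\ref{thm:sound}.

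The propositional cases are immediate. If the last rule is $(\REl)$ then $\phi \in \Gamma$, hence $\phi^t \in \Gamma^t$ and the element rule of $\IK_2$ yields $\Gamma^t \vdash_{\IK_2} \phi^t$. For $(\RMP)$ the key observation is that $(-)^t$ is a homomorphism for the propositional connectives, so $(\psi \to \phi)^t = \psi^t \to \phi^t$; the induction hypotheses $\Gamma^t \vdash_{\IK_2} \psi^t$ and $\Gamma^t \vdash_{\IK_2} \psi^t \to \phi^t$ then combine via modus ponens in $\IK_2$. For $(\RAx)$ there are two kinds of axioms to handle. Any substitution instance of an intuitionistic propositional axiom translates, again because $(-)^t$ commutes with the connectives, to a substitution instance of the same schema over $\lan{L}_2$, which is derivable in $\IK_2$ since $\IK_2$ contains intuitionistic logic. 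For the modal axioms I would first record that substitution and translation commute: substituting $\chi$ for a proposition letter $p$ and then translating equals translating and then substituting $\chi^t$ for $p$ (this is checked by a one-line induction, using that $(-)^t$ leaves the modalities in place). Consequently every translated substitution instance of $(\ANega)$ or $(\AInt)$ is itself a substitution instance of $(\ANega^t)$ respectively $(\AInt^t)$, which are derivable in $\IK_2$ by Lemma~\ref{lem:trans-ax}; since $\IK_2$ is closed under substitution, all such instances are derivable.

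The monotonicity cases are where the shape of the translation is used. If the last rule is $(\RMonB)$ then $\phi = \Box\psi \to \Box\chi$, and since the premise of the rule is $\emptyset \vdash \psi \to \chi$, the induction hypothesis gives $\emptyset \vdash_{\IK_2} \psi^t \to \chi^t$. Because $(\Box\psi \to \Box\chi)^t = \DiamondN\BoxI\psi^t \to \DiamondN\BoxI\chi^t$, I would first apply the derived rule $(\RMonB_{,\ni})$ of Lemma~\ref{lem:IKJ-mon} to obtain $\emptyset \vdash_{\IK_2} \BoxI\psi^t \to \BoxI\chi^t$, and then apply $(\RMonD_{,N})$ to reach $\Gamma^t \vdash_{\IK_2} \DiamondN\BoxI\psi^t \to \DiamondN\BoxI\chi^t = (\Box\psi \to \Box\chi)^t$. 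The case $(\RMonD)$ is dual: from $\emptyset \vdash_{\IK_2} \psi^t \to \chi^t$ I would apply $(\RMonD_{,\ni})$ and then $(\RMonB_{,N})$ to obtain $\Gamma^t \vdash_{\IK_2} \BoxN\DiamondI\psi^t \to \BoxN\DiamondI\chi^t = (\Diamond\psi \to \Diamond\chi)^t$.

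None of these steps is genuinely hard; the real content sits in Lemmas~\ref{lem:IKJ-mon} and~\ref{lem:trans-ax}, which I treat as already established. The only point requiring care is the bookkeeping forced by the generalised Hilbert calculus: I must verify that the $\emptyset$-premise restriction of the monotonicity rules is respected, and, crucially, that the auxiliary rules of Lemma~\ref{lem:IKJ-mon} deliver their conclusions for an arbitrary context (in particular for the empty one) so that the two monotonicity applications can legitimately be chained. This is the step I expect to be the main, if mild, obstacle.
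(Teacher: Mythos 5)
Your proposal is correct and follows essentially the same route as the paper: induction on a derivation in the generalised Hilbert calculus (the paper leaves the identification of $\IMf$ with $\IMfC$ via Theorem~\ref{thm:IM-IMC} implicit), with the modal cases discharged exactly as you describe, replacing $(\RMonB)$ by $(\RMonB_{,\ni})$ followed by $(\RMonD_{,N})$ and dually for $(\RMonD)$, and invoking Lemmas~\ref{lem:IKJ-mon} and~\ref{lem:trans-ax}. You are in fact somewhat more careful than the paper, which in the $(\RAx)$ case only mentions instances of $(\ANega)$ and $(\AInt)$ and does not spell out the intuitionistic axioms, the commutation of translation with substitution, or the empty-context bookkeeping that you rightly flag.
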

\begin{proof}
  We use induction on the structure of a proof $\delta$ for $\Gamma \vdash_{\IMf} \phi$.
  If the last rule is ($\REl$) then $\phi \in \Gamma$,
  so $\phi^t \in \Gamma^t$ hence $\Gamma^t \vdash_{\IK_2} \phi^t$ using ($\REl$).
  If the last rule in $\delta$ is ($\RAx$) then $\phi$ is an instance of
  ($\ANega$) or ($\AInt$). By Lemma~\ref{lem:trans-ax} their translations
  are derivable in $\IK_2$, so we can use ($\RAx$) to find
  $\Gamma^t \vdash_{\IK_2} \phi^t$.
  
  If the last rule in $\delta$ is modus ponens, then we have
  $\Gamma \vdash_{\IMf} \psi$ and $\Gamma \vdash_{\IMf} \psi \to \phi$ for some
  $\psi \in \Lbd$. By induction, $\Gamma^t \vdash_{\IK_2} \psi^t$
  and $\Gamma^t \vdash_{\IK_2} \vdash (\psi \to \phi)^t$.
  By definition $(\psi \to \phi)^t = \psi^t \to \phi^t$, so we can use
  modus ponens to derive $\Gamma^t \vdash_{\IK_2} \phi^t$.
  Finally, if the last rule in $\delta$ is $(\RMonB)$ then we can replace this
  by $(\RMonB_{\ni})$ and $(\RMonD_N)$ (in that order),
  which are derivable in $\IK_2$ by Lemma~\ref{lem:IKJ-mon}.
  The case for $(\RMonD)$ can be dealt with similarly.
\end{proof}

  We use a semantic argument to prove the converse of Proposition~\ref{prop:IM-IK2}.
  This makes use of the following transformation of an intuitionistic neighbourhood
  model into an $\IK_2$-model.

\begin{definition}\label{def:IM-IK2-sem}
  Let $\mo{M} = (W, \leq, N, V)$ be an intuitionistic neighbourhood model.
  Let $N(W) = \{ (a, w) \mid a \in N, w \in \dom(a) \}$ and order $N(W)$ by
  $(a, w) \sqsubset (b, v)$ iff ($a = b$ and $w \leq v$).
  Define $W^* = W \cup N(W)$ and let ${\leq^*}$ be the union of $\leq$ and
  $\sqsubset$. Then $(W^*, \leq^*)$ is the disjoint union of
  $(W, \leq)$ and $(N(W), \sqsubset)$, hence a partially ordered set.
  Define binary relations $R_N$ and $R_{\ni}$ on $W^*$ by
  \begin{equation*}
    wR_N(a, v) \iff w = v, \qquad
    (a, w) R_{\ni}v \iff v \in a(w)
  \end{equation*}
  Write $\mo{M}^* = (W^*, \leq^*, R_N, R_{\ni}, V)$.
\end{definition}

\begin{lemma}
  Let $\mo{M} = (W, \leq, N, V)$ be a coherent intuitionistic neighbourhood
  model. Then $\mo{M}^*$ is an $\IK_2$-model.
\end{lemma}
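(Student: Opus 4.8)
The plan is to verify the defining conditions of an $\IK_2$-model one at a time. Such a model requires a partially ordered set, a valuation sending proposition letters to upsets of the order, and, since both the box and the diamond of a given index are interpreted by a single accessibility relation, the two frame conditions relating each of $R_N$ and $R_{\ni}$ to $\leq^*$: a forward condition (if $x \leq^* x'$ and $x R y$ then there is $y'$ with $y \leq^* y'$ and $x' R y'$) and a backward condition (if $x R y$ and $y \leq^* y'$ then there is $x'$ with $x \leq^* x'$ and $x' R y'$). That $(W^*, \leq^*)$ is a poset is already recorded in Definition~\ref{def:IM-IK2-sem}. The valuation condition is immediate: each $V(p_i)$ is an upset of $(W, \leq)$, and because $W^*$ is the \emph{disjoint} union of $(W, \leq)$ and $(N(W), \sqsubset)$, no element of $N(W)$ lies $\leq^*$-above an element of $W$; hence $V(p_i)$ stays upward closed in $(W^*, \leq^*)$.

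The relation $R_N$ only connects a world $w \in W$ to pairs $(a, w) \in N(W)$, and both its confluence conditions hold without appealing to coherence. For the forward condition, suppose $w R_N (a, w)$ and $w \leq^* w'$; since $W$ and $N(W)$ are ordered disjointly, $w' \in W$ and $w \leq w'$, and as $\dom(a)$ is an upset we get $w' \in \dom(a)$, so $(a, w') \in N(W)$, $(a, w) \sqsubset (a, w')$ and $w' R_N (a, w')$, as required. The backward condition is even more direct: if $w R_N (a, w)$ and $(a, w) \leq^* (a, w')$, then $w \leq w'$ and $(a, w') \in N(W)$, so $w'$ itself witnesses $w \leq^* w'$ and $w' R_N (a, w')$.

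The interesting case, and the one that reveals why coherence is exactly the right hypothesis, is $R_{\ni}$, which connects a pair $(a, w) \in N(W)$ to a world $v \in a(w)$. For the forward condition, assume $(a, w) R_{\ni} v$ and $(a, w) \leq^* (a, w')$; then $w \leq w'$ and $v \in a(w)$, and condition~\eqref{it:in-2} supplies a $v' \in a(w')$ with $v \leq v'$, which is precisely a witness $v'$ satisfying $v \leq^* v'$ and $(a, w') R_{\ni} v'$. For the backward condition, assume $(a, w) R_{\ni} v$ and $v \leq^* v'$; then $v \in a(w)$ and $v \leq v'$, and condition~\eqref{it:in-3} yields a $w' \geq w$ with $v' \in a(w')$, so that $(a, w) \sqsubset (a, w')$ and $(a, w') R_{\ni} v'$. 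Thus the two coherence conditions translate verbatim into the forward and backward frame conditions for $R_{\ni}$. There is no genuine obstacle beyond keeping track of the direction in which each relation crosses the disjoint union; the one point worth stressing is this exact matching of~\eqref{it:in-2} and~\eqref{it:in-3} with the required confluence conditions, which is what fails in a non-coherent model.
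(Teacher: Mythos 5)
Your proposal is correct and follows essentially the same route as the paper's proof: the two confluence conditions for $R_N$ are discharged by the witnesses $(a,w')$ and $w'$, and the two conditions for $R_{\ni}$ are exactly the coherence conditions~\eqref{it:in-2} and~\eqref{it:in-3}. Your write-up is in fact slightly more careful than the paper's, since you also check that the valuation remains an upset-valuation on $(W^*,\leq^*)$ and make explicit that $w'\in\dom(a)$ (via upward closure of $\dom(a)$) is needed for the witness $(a,w')$ to exist.
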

\begin{proof}
  We need to verify the following frame conditions:
  \begin{equation*}
        \begin{tikzpicture}[scale=.75]
        \node (w)   at (0,0)   {$w$};
        \node (aw)  at (2,0)   {$(a, w)$};
        \node (wp)  at (0,1.5) {$w'$};
        \node (awp) at (2,1.5) {$\star$};
        \draw[-latex] (w) to (wp);
        \draw[-Circle] (w) to node[above]{\footnotesize{$R_N$}} (aw);
        \draw[dashed, -latex] (aw) to (awp);
        \draw[dashed, -Circle] (wp) to node[above]{\footnotesize{$R_N$}} (awp);
        \node (w)   at (4,0)   {$w$};
        \node (aw)  at (6,0)   {$(a, w)$};
        \node (wp)  at (4,1.5) {$\star$};
        \node (awp) at (6,1.5) {$(a, w')$};
        \draw[dashed, -latex] (w) to (wp);
        \draw[-Circle] (w) to node[above]{\footnotesize{$R_N$}} (aw);
        \draw[-latex] (aw) to (awp);
        \draw[dashed, -Circle] (wp) to node[above]{\footnotesize{$R_N$}} (awp);
        \node (aw)  at ( 8,0)   {$(a, w)$};
        \node (v)   at (10,0)   {$v$};
        \node (awp) at ( 8,1.5) {$(a, w')$};
        \node (vp)  at (10,1.5) {$\star$};
        \draw[-latex] (aw) to (awp);
        \draw[-Circle] (aw) to node[above]{\footnotesize{$R_{\ni}$}} (v);
        \draw[-latex, dashed] (v) to (vp);
        \draw[-Circle, dashed] (awp) to node[above]{\footnotesize{$R_{\ni}$}} (vp);
        \node (aw)  at (12,0)   {$(a, w)$};
        \node (v)   at (14,0)   {$v$};
        \node (awp) at (12,1.5) {$\star$};
        \node (vp)  at (14,1.5) {$v'$};
        \draw[-latex, dashed] (aw) to (awp);
        \draw[-Circle] (aw) to node[above]{\footnotesize{$R_{\ni}$}} (v);
        \draw[-latex] (v) to (vp);
        \draw[-Circle, dashed] (awp) to node[above]{\footnotesize{$R_{\ni}$}} (vp);
    \end{tikzpicture}
  \end{equation*}
  That is, we need to find a suitable element for $\star$ making the dashed lines true.
  In the first two diagrams we can simply use $(a, w')$ and $w'$.
  In the third diagram, we have $v \in a(w)$ and $w \leq w'$, so we can use
  \eqref{it:in-2} to find some $v' \in a(w')$ such that $v \leq v'$.
  Since $v' \in a(w')$ we have $(a, w') R_{\ni} v'$, so $v'$ completes the
  diagram. We can use~\eqref{it:in-3} in a similar way to complete the last diagram.
\end{proof}
  
  This transformation of intuitionistic neighbourhood models to
  $\IK_2$-models works well with the translation of $\Lbd$-formulas.

\begin{proposition}\label{prop:IM-IK2-sem}
  Let $\mo{M} = (W, \leq, N, V)$ be a coherent intuitionistic neighbourhood model.
  Then for all $\phi \in \Lbd$ and $w \in W$,
  $$
    \mo{M}, w \Vdash \phi \iff \mo{M}^*, w \Vdash \phi^t.
  $$
\end{proposition}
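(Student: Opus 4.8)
The plan is to prove the biconditional by induction on the structure of $\phi \in \Lbd$, relying on the standard birelational forcing of $\IK_2$ (Appendix~\ref{app:IK}): for each $R \in \{ R_N, R_{\ni} \}$ one has $\mo{M}^*, u \Vdash \Box_R\psi$ iff $\mo{M}^*, t \Vdash \psi$ for every $u'$ with $u \leq^* u'$ and every $t$ with $u' R t$, while $\mo{M}^*, u \Vdash \Diamond_R\psi$ iff $u R t$ and $\mo{M}^*, t \Vdash \psi$ for some $t$. The base cases $\phi = \bot$ and $\phi = p_i$ are immediate, since $(-)^t$ fixes these formulas and $V$ is shared by the two models. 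For the intuitionistic connectives I would first observe that, because $\leq^*$ is the disjoint union of $\leq$ and $\sqsubset$, every $\leq^*$-successor of a world $w \in W$ again lies in $W$; hence the clauses for $\wedge$, $\vee$ and $\to$ never leave $W$ and the induction hypothesis applies verbatim.

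For $\phi = \Box\psi$ I would unfold $(\Box\psi)^t = \DiamondN\BoxI\psi^t$ from the outside in. A world $w \in W$ has $R_N$-successors exactly of the form $(a, w)$ with $a \in N_w$, so $\mo{M}^*, w \Vdash \DiamondN\BoxI\psi^t$ holds iff some $a \in N_w$ satisfies $\mo{M}^*, (a,w) \Vdash \BoxI\psi^t$. Since the $\leq^*$-successors of $(a,w)$ inside $N(W)$ are precisely the pairs $(a, w')$ with $w \leq w'$, and $(a, w') R_{\ni} v$ iff $v \in a(w')$, the inner box unfolds to: $\mo{M}^*, v \Vdash \psi^t$ for every $w' \geq w$ and every $v \in a(w')$. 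Applying the induction hypothesis to each such $v \in a(w') \subseteq W$ turns this into $a(w') \subseteq \llb \psi \rrb^{\mo{M}}$ for all $w' \geq w$, which is exactly the witness required by the intuitionistic box clause of Definition~\ref{def:inm}. Thus $\mo{M}^*, w \Vdash (\Box\psi)^t$ iff $\mo{M}, w \Vdash \Box\psi$.

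The case $\phi = \Diamond\psi$ is dual: from $(\Diamond\psi)^t = \BoxN\DiamondI\psi^t$, unfolding the outer $\BoxN$ quantifies over all $w' \in W$ with $w \leq w'$ and all $R_N$-successors $(a, w')$ of $w'$ (that is, all $a \in N_{w'}$), while the inner $\DiamondI$ asks for some $v \in a(w')$ with $\mo{M}^*, v \Vdash \psi^t$; by the induction hypothesis this matches the diamond clause of Definition~\ref{def:inm} on the nose. The only delicate point --- and the part I would write out most carefully --- is the bookkeeping of the quantifiers: the ``for all $w' \geq w$'' appearing in both intuitionistic neighbourhood clauses is produced by the \emph{implicit} $\leq^*$-quantification hidden inside the inner $\BoxI$ (respectively by the outer $\BoxN$), and one must check that this quantification tracks the \emph{same} neighbourhood $a$ rather than ranging independently. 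Finally, I would stress that coherence of $\mo{M}$ plays no role in the inductive argument itself; it is needed only to ensure that $\mo{M}^*$ is a genuine $\IK_2$-model, which the preceding lemma already guarantees, so that the forcing relation on $\mo{M}^*$ is well defined.
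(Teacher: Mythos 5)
Your proof is correct and takes essentially the same route as the paper's: induction on $\phi$, with the modal cases handled by unfolding $\DiamondN\BoxI$ and $\BoxN\DiamondI$ over $\mo{M}^*$ and observing that, by design of $R_N$, $R_{\ni}$ and $\sqsubset$ (which keeps the neighbourhood component fixed along $\leq^*$), these unfoldings match the neighbourhood clauses of Definition~\ref{def:inm} exactly. Your proposal is in fact more explicit than the paper's proof (which only showcases the $\Box$ case), and your closing remarks --- that $\leq^*$-successors of worlds in $W$ stay in $W$, and that coherence is needed only to guarantee $\mo{M}^*$ is an $\IK_2$-model rather than in the induction itself --- are both accurate.
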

\begin{proof}
  The proof proceeds by induction on the structure of $\phi$.
  The propositional cases are routine. The induction steps for
  $\phi = \Box\psi$ and $\phi = \Diamond\psi$ essentially hold by
  design of $\mo{M}^*$. We showcase the former.

  Suppose $\phi = \Box\psi$ and $\mo{M}, w \Vdash \Box\psi$.
  Then there exists a neighbourhood $a$ such that $w \in \dom(a)$ and
  for all $w' \geq w$ we have that $v \in a(w')$ implies $\mo{M}, v \Vdash \psi$.
  Then by induction we have $\mo{M}^*, v \Vdash \psi^t$ for all $v \in a(w')$ for
  some $w' \geq w$. By definition of $\sqsubset$ we have
  $(a, w) \sqsubset (a, w')$ if and only if $w \leq w'$.
  Therefore we find $\mo{M}^*, (a, w) \Vdash \Box_{\ni}\psi^t$.
  Since $w R_N (a, w)$ we find $\mo{M}^*, w \Vdash \Diamond_N\Box_{\ni}\psi^t$.
  
  For the converse, suppose $\mo{M}^*, w \Vdash \Diamond_N\Box_{\ni}\psi^t$.
  Then there exists some $a \in N$ such that $w R_N (a, w)$ and
  such that $(a, w) \leq^* (a, w') R_{\ni} v$ implies $\mo{M}^*, v \Vdash \psi^t$.
  Unravelling the definitions and using the induction hypothesis
  shows that $a \in N$ witnesses $\mo{M}, w \Vdash \Box\psi$.
\end{proof}

  We now have all ingredients to prove that the translation from
  Definition~\ref{def:trans} both preserves and reflects derivability
  of consecutions.

\begin{theorem}\label{thm:trans}
  Let $\Gamma \vdash \phi$ be a $\Lbd$-consecution.
  Then we have
  \begin{equation*}
    \Gamma \vdash_{\IMf} \phi \iff \Gamma^t \vdash_{\IK_2} \phi^t.
  \end{equation*}
\end{theorem}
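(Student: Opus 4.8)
The forward implication is precisely the content of Proposition~\ref{prop:IM-IK2}, so my plan is to take that direction as given and concentrate entirely on the converse, which I would establish by contraposition using the semantic machinery assembled earlier in the paper. Concretely, I would assume $\Gamma \not\vdash_{\IMf} \phi$ and produce an $\IK_2$-model that falsifies the translated consecution $\Gamma^t \vdash \phi^t$, then invoke soundness of $\IK_2$.

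The first step is to descend to a convenient countermodel. From $\Gamma \not\vdash_{\IMf} \phi$, the chain of equivalences in Theorem~\ref{thm:IM-IMC} yields $\Gamma \not\Vdash^{\inm} \phi$, and Theorem~\ref{thm:compl-coh} then strengthens this to $\Gamma \not\Vdash^{\coh} \phi$. Hence there is a \emph{coherent} intuitionistic neighbourhood model $\mo{M} = (W, \leq, N, V)$ and a world $w \in W$ with $\mo{M}, w \Vdash \Gamma$ and $\mo{M}, w \not\Vdash \phi$. The point of insisting on coherence is that it makes the next step legitimate.

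The second step is to transport this countermodel across the translation. I would pass to the $\IK_2$-model $\mo{M}^*$ of Definition~\ref{def:IM-IK2-sem}, which is a genuine $\IK_2$-model precisely because $\mo{M}$ is coherent. Applying Proposition~\ref{prop:IM-IK2-sem} formula-by-formula gives $\mo{M}^*, w \Vdash \psi^t$ for every $\psi \in \Gamma$, that is $\mo{M}^*, w \Vdash \Gamma^t$, while simultaneously $\mo{M}^*, w \not\Vdash \phi^t$. Thus $\Gamma^t \vdash \phi^t$ is falsified at $w$ in an $\IK_2$-model. Finally, soundness of $\IK_2$ with respect to its frame semantics (Appendix~\ref{app:IK}) forbids a derivable consecution from being falsifiable, so $\Gamma^t \not\vdash_{\IK_2} \phi^t$, completing the contrapositive.

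The only genuinely delicate point — and the reason the reduction to coherent models is not optional — is that the transformation $\mo{M} \mapsto \mo{M}^*$ must land inside the class of $\IK_2$-models. Verifying the frame conditions on $R_N$ and $R_{\ni}$ relies exactly on the coherence conditions~\eqref{it:in-2} and~\eqref{it:in-3}, which is why I route the argument through Theorem~\ref{thm:compl-coh} rather than working with an arbitrary intuitionistic neighbourhood model. Everything else in the converse is bookkeeping: combining the completeness of $\IMfC$, the coherence normalisation, the truth-preservation of Proposition~\ref{prop:IM-IK2-sem}, and soundness of $\IK_2$.
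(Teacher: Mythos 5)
Your proof is correct and follows essentially the same route as the paper's: forward direction via Proposition~\ref{prop:IM-IK2}, converse by contraposition through a coherent countermodel obtained from Theorems~\ref{thm:IM-IMC} and~\ref{thm:compl-coh}, the transformation $\mo{M} \mapsto \mo{M}^*$ together with Proposition~\ref{prop:IM-IK2-sem}, and finally the frame semantics of $\IK_2$. If anything, you are slightly more precise at the last step: what is needed there is \emph{soundness} of $\IK_2$ (the paper invokes Theorem~\ref{thm:ik-compl} under the name ``completeness,'' though that theorem states both).
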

\begin{proof}
  The direction from left to right is Proposition~\ref{prop:IM-IK2}.
  For the converse we reason by contraposition.
  Suppose $\Gamma \not\vdash_{\IMf} \phi$.
  Then by Theorems~\ref{thm:compl-coh} and~\ref{thm:IM-IMC} there exists a
  coherent intuitionistic
  neighbourhood model $\mo{M}$ and a world $w$ such that $\mo{M}, w \Vdash \Gamma$
  while $\mo{M}, w \not\Vdash \phi$.
  Using Proposition~\ref{prop:IM-IK2-sem} we find that $\mo{M}^*$ is
  an intuitionistic $\IK_2$-model such that $\mo{M}^*, w \Vdash \Gamma^t$
  and $\mo{M}^*, w \not\Vdash \phi^t$.
  Finally, completeness of $\IK_2$ with respect to $\IK_2$-frames
  (Theorem~\ref{thm:ik-compl}) yields $\Gamma^t \not\vdash_{\IK_2} \phi^t$.
\end{proof}

  As an example, we use Theorem~\ref{thm:trans} to prove that
  the formula $(\Box\bot \to \Diamond\top) \to \Diamond\top$ is not
  derivable in $\IMf$.

\begin{example}
  We show that the formula $(\Box\bot \to \Diamond\top) \to \Diamond\top$
  is not derivable in $\IMf$ by showing that its translation into bimodal formula
  is not in $\IK_2$.
  The bimodal translation of $(\Box\bot \to \Diamond\top) \to \Diamond\top$ is
  \begin{equation}\label{eq:exm-trans}
    (\DiamondN\BoxI\bot \to \BoxN\DiamondI\top) \to \BoxN\DiamondI\top.
  \end{equation}
  Consider the following $\IK_2$-frame, where $\leq$ is given by the reflexive
  closure of the arrows, and the relations $R_N$ and $R_{\in}$ are as indicated:
  \begin{equation*}
        \begin{tikzpicture}[scale=.75]
        \node (w) at (0,0)   {$w$};
        \node (v) at (2,0)   {$v$};
        \node (u) at (0,1.5) {$u$};
        \node (s) at (2,1.5) {$s$};
        \draw[-latex] (w) to (u);
        \draw[-Circle] (w) to node[above]{\footnotesize{$R_N$}} (v);
        \draw[-latex] (v) to (s);
        \draw[-Circle] (u) to node[above]{\footnotesize{$R_N$}} (s);
        \draw[-Circle] (2,1.7) -- (2,1.9) arc(180:-90:.4) -- (2.2,1.5);
        \node at (3.15,1.9) {\footnotesize{$R_{\ni}$}};
    \end{tikzpicture}
  \end{equation*}
  Let $V$ be any valuation.
  Then $v \not\Vdash \BoxI\bot$ because $v \leq s R_{\ni} s$,
  and $s \not\Vdash \bot$. Since the only world accessible from $w$ via $R_N$
  is $v$, this implies $w \not\Vdash \DiamondN\BoxI\bot$.
  Furthermore, we have $v \Vdash \BoxN\DiamondI\top$ because $s$ is the only
  world accessible from $v$ via $({\leq} \circ R_N)$ and $s \Vdash \DiamondI\top$.
  Therefore we find $w \Vdash \DiamondN\BoxI\bot \to \BoxN\DiamondI\top$.
  Finally, $w \not\Vdash \BoxN\DiamondI\top$ because $w R_N v$ and $v \not\Vdash \Diamond\top$.
  So this model falsifies the formula from~\eqref{eq:exm-trans},
  and hence $\IMf$ does not validate $(\Box\bot \to \Diamond\top) \to \Diamond\top$.
\end{example}

\section{Conclusion}\label{sec:conc}

  We introduced an intuitionistic monotone modal logic motivated by intuitionistic
  first-order logic. Specifically, we observed how classical monotone modal logic
  can be translated into a suitable first-order logic $\FOM$, changed
  this first-order logic to an intuitionistic one of the same signature,
  and then used the same translation to define the intuitionistic monotone modal
  logic $\IMf$.
  This mirrors the way the intuitionistic modal logic $\IK$ can be obtained.
  
  Guided by the interpreting structures of $\IFOM$, we provided various types
  of semantics for $\IMf$. This led to the concept of an
  \emph{intuitionistic neighbourhood}: a counterpart of a classical neighbourhood
  that may change when moving along the intuitionistic accessibility relation.
  We then compared our logic to various other intuitionistic modal
  logics, so as to situate it in the intuitionistic modal logic landscape.

  While (as shown in Section~\ref{subsec:comparison}) we could have used
  a more traditional notion of neighbourhoods to define semantics for $\IMf$,
  we opted to use the novel concept of intuitionistic neighbourhoods because we feel
  that it may pave the way for new mathematical as well as philosophical considerations.
  For example, we could ask if it gives rise to different correspondence results
  than constructive neighbourhood semantics, and whether it is possible to obtain
  Sahlqvist-style correspondence results.
  Philosophically, intuitionistic neighbourhoods embody the idea that the same
  piece of evidence may change (e.g.~become more specific) when moving among
  possible worlds. It would be interesting to see how these can be used for
  intuitionistic reasoning about (multi)agent systems.

\paragraph{Acknowledgements}
  I am grateful to Ian Shillito for his advice on generalised Hilbert calculi and
  his comments on an earlier version of this paper.
  I would also like to thank the reviewers who commented on an earlier version
  of this paper. Their comments and questions led me to completely rethink the
  paper, resulting in a better motivated paper and a more thorough investigation
  of $\IMf$.

\bibliographystyle{plainnat}
{\footnotesize
\bibliography{../../../Latex/biblio.bib}{}}

\appendix
\section{Multimodal IK}\label{app:IK}

  We sketch the syntax, proof system and semantics of a multimodal
  version of the intuitionistic modal logic $\log{IK}$.
  
\begin{definition}\label{def:lan}
  We fix a countable set $\Prop$ of propositional variables.
  For a set $J$ of modal indices, define
  $\mathcal{L}_J$ be the language generated by the grammar
  $$
    \phi ::= p_i
      \mid \bot
      \mid \phi \wedge \phi
      \mid \phi \vee \phi
      \mid \phi \to \phi
      \mid \Box_j\phi
      \mid \Diamond_j\phi,
  $$
  where $p_i$ ranges over $\Prop$ and $j \in J$.
\end{definition}

  We essentially define $\log{IK}_J$ via the axiomatisation used by
  Fischer Servi~\cite{Fis77},
  Plotkin and Sterling~\cite{PloSti86}
  and Simpson~\cite{Sim94}, but akin to Definition~\ref{def:ghc}
  we present it as a generalised Hilbert calculus.

\begin{definition}
  Let $\operatorname{Ax}$ be some axiomatisation of intuitionistic logic.
  Let $\Ax$ be the collection of substitution instances of axioms
  from $\operatorname{Ax}$ and of the following axioms:
  \begin{multicols}{2}
  \begin{enumerate}
  \setlength{\itemindent}{1em}\itemsep=0em
    \item[($\AKbox_j$)] \label{ax:int-1}
           $\Box_j(p \to q) \to (\Box_j p \to \Box_j q)$
    \item[($\ACdia_j$)] \label{ax:int-4}
           $\Diamond_j(p \vee q) \to (\Diamond_j p \vee \Diamond_j q)$
    \item[($\ANdia_j$)] \label{ax:int-3} $\neg\Diamond_j\bot$
    \item[($\AKdia_j$)] \label{ax:int-2}
           $\Box_j(p \to q) \to (\Diamond_j p \to \Diamond_j q)$
    \item[($\ADualI_j$)] \label{ax:int-5}
           $(\Diamond_j p \to \Box_j q) \to \Box_j(p \to q)$
    \item[] \phantom{.}
  \end{enumerate}
  \end{multicols}
  \noindent
  Define the logic $\log{IK}_J$ as the collection of sequents derivable
  from the following rules:
  the \emph{element rule} and the \emph{axiom rule}
  $$
    \dfrac{}{\Gamma \vdash \phi} \; (\REl),
    \qquad
    \dfrac{}{\Gamma \vdash \psi} \; (\RAx),
  $$
  where $\phi \in \Gamma$ and $\psi \in \Ax$,
  and \emph{modus ponens} and \emph{neccesitation}
  $$
    \dfrac{\Gamma \vdash \phi \quad \Gamma \vdash \phi \to \psi}
          {\Gamma \vdash \psi} \; (\RMP),
    \qquad
    \dfrac{\emptyset \vdash \phi}
          {\Gamma \vdash \Box_j\phi} \; (\RNec).
  $$
  We write $\Gamma \vdash_{\log{IK}_J} \phi$ if $\Gamma \vdash \phi$ is in
  $\log{IK}_J$.
\end{definition}

  If we pick $J$ to be a singleton set, then we simply obtain $\IK$.
  The Kripke semantics of $\log{IK}$, introduced
  independently by Ewald~\cite{Ewa86}, Fischer Servi~\cite{Fis81}
  and Plotkin and Stirling~\cite{PloSti86}, is given as follows.

\begin{definition}\label{def:kf}
  An \emph{intuitionistic multimodal frame} of \emph{$\IK_J$-frame}
  is a tuple $(W, \leq, \{ R_j \mid j \in J \})$ consisting
  of a set $W$, a preorder $\leq$ on $W$, and binary relation $R_j$ on $W$
  for each $j \in J$ satisfying:
  \begin{enumerate}
    \item \label{it:kf-1}
          If $w \leq v$ and $wR_ju$ then there exists a $x \in W$ such that
          $yRw$ and $z \leq w$;
    \item If $wR_ju$ and $u \leq x$ then there exists a $v \in W$ such that
          $w \leq v$ and $vR_jx$.
  \end{enumerate}
  These conditions are depicted in Figure~\ref{fig:IK-1}.
  \begin{figure}[h!]
    \centering
    \begin{tikzpicture}[scale=.75]
        \node (w) at (0,0)   {$w$};
        \node (u) at (2,0)   {$u$};
        \node (v) at (0,1.5) {$v$};
        \node (x) at (2,1.5) {$x$};
        \draw[-latex] (w) to (v);
        \draw[-Circle] (w) to node[above]{\footnotesize{$R_j$}} (u);
        \draw[dashed, -latex] (u) to (x);
        \draw[dashed, -Circle] (v) to node[above]{\footnotesize{$R_j$}} (x);
        \node (w) at (4,0)   {$w$};
        \node (u) at (6,0)   {$u$};
        \node (v) at (4,1.5) {$v$};
        \node (x) at (6,1.5) {$x$};
        \draw[dashed, -latex] (w) to (v);
        \draw[-Circle] (w) to node[above]{\footnotesize{$R_j$}} (u);
        \draw[-latex] (u) to (x);
        \draw[dashed, -Circle] (v) to node[above]{\footnotesize{$R_j$}} (x);
    \end{tikzpicture}
    \caption{Depiction of the conditions of an $\IK_J$-frame.}
    \label{fig:IK-1}
  \end{figure}
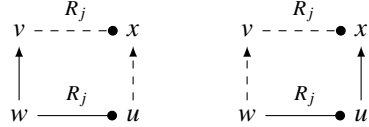
  An \emph{intuitionistic multimodal model} is an intuitionistic multimodal
  frame $(X, \leq, \{ R_j \mid j \in J \})$ together with a valuation $V$
  that assigns to each proposition letter $p_i$ and upset of $(X, \leq)$.
  The interpretation of a formula $\phi$ in a model
  $\mo{M} = (X, \leq, \{ R_j \mid j \in J \}, V)$ is
  defined recursively via
  \begin{align*}
    \mo{M}, x \Vdash p &\iff x \in V(p) \qquad \text{(where $p \in \Prop$)} \\
    \mo{M}, x \Vdash \bot &\phantom{\iff} \text{never} \\
    \mo{M}, x \Vdash \psi \wedge \chi
      &\iff \mo{M}, x \Vdash \psi \text{ and } \mo{M}, x \Vdash \chi \\
    \mo{M}, x \Vdash \psi \vee \chi
      &\iff \mo{M}, x \Vdash \psi \text{ or } \mo{M}, x \Vdash \chi \\
    \mo{M}, x \Vdash \psi \to \chi
      &\iff \forall y \in X(\text{if } x \leq y \text{ and } \mo{M}, x \Vdash \psi \text{ then } \mo{M}, y \Vdash \chi) \\
    \mo{M}, x \Vdash \Box_j\psi &\iff \forall y, z \in X(\text{if } x \leq y
      \text{ and } yR_jz \text{ then } \mo{M}, z \Vdash \psi) \\
    \mo{M}, x \Vdash \Diamond_j\psi &\iff \exists y \in X \text{ s.t. }
       xR_jy \text{ and }\mo{M}, y \Vdash \psi
  \end{align*}
  A world $x$ in a model $\mo{M}$ satisfies a sequent $\Gamma \vdash \phi$
  if $\mo{M}, x \Vdash \phi$ whenever $x$ satisfies all formulas in $\Gamma$.
\end{definition}
  
  It is well known that the intuitionistic modal logic $\IK$ with a single pair
  of modalities is sound and strongly complete with respect to the class of
  intuitionistic multimodal frames from Definition~\ref{def:kf} where $J$ is
  a singleton set~\cite[Section~3.3]{Sim94}.
  The following is a multimodal analogue of this,
  and can be proven using a canonical model construction.

\begin{theorem}\label{thm:ik-compl}
  The logic $\log{IK}_J$ is sound and strongly complete with respect to the class
  of $\IK_J$-frames.
\end{theorem}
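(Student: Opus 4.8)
The plan is to obtain soundness by a routine induction on derivations and strong completeness by a canonical model construction, closely following Simpson's single-modal argument~\cite{Sim94} and treating each index $j \in J$ independently. For soundness I would check that every instance of $(\AKbox_j)$, $(\ACdia_j)$, $(\ANdia_j)$, $(\AKdia_j)$ and $(\ADualI_j)$ is valid on an arbitrary $\IK_J$-frame---these correspond exactly to the two zig-zag conditions imposed in Definition~\ref{def:kf}---and that $(\RMP)$ and $(\RNec)$ preserve validity relative to the consequence relation. Since the axiomatisation contains no axiom linking two distinct indices, the modalities $\Box_j, \Diamond_j$ can be analysed one index at a time, so the multimodal case adds nothing essential over the single-modal one.

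For completeness I would let $W^c$ be the set of prime theories of $\IK_J$ (defined as in Definition~\ref{def:prime}, now relative to $\vdash_{\IK_J}$), ordered by inclusion, put $V^c(p_i) = \{\Gamma \in W^c \mid p_i \in \Gamma\}$, and for each $j \in J$ define the canonical relation by
\[
  \Gamma R_j^c \Delta \iff \{\psi \mid \Box_j\psi \in \Gamma\} \subseteq \Delta \ \text{ and } \ \{\Diamond_j\psi \mid \psi \in \Delta\} \subseteq \Gamma .
\]
Writing $\mo{M}^c = (W^c, \subseteq, \{R_j^c \mid j \in J\}, V^c)$, the analogue of the Lindenbaum Lemma~\ref{lem:lindenbaum} for $\IK_J$ supplies, whenever $\Gamma \not\vdash_{\IK_J} \phi$, a prime theory extending $\Gamma$ and omitting $\phi$. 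I would first verify that $\mo{M}^c$ is indeed an $\IK_J$-model: the two frame conditions of Definition~\ref{def:kf} hold for each $R_j^c$, with condition~\eqref{it:kf-1} following from $(\ADualI_j)$ (which lets one transport an $R_j^c$-successor upward along $\subseteq$) and the companion condition following from $(\AKdia_j)$ together with the two inclusions defining $R_j^c$.

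The technical heart is then the pair of existence lemmas driving the modal cases of the truth lemma. For the diamond, if $\Diamond_j\phi \in \Gamma$ I would extend $\{\psi \mid \Box_j\psi \in \Gamma\} \cup \{\phi\}$ to a prime theory $\Delta$ with $\Gamma R_j^c \Delta$, using $(\AKdia_j)$ and $(\ANdia_j)$ to keep $\{\Diamond_j\chi \mid \chi \in \Delta\} \subseteq \Gamma$. For the box, if $\Box_j\phi \notin \Gamma$ I would separate $\{\psi \mid \Box_j\psi \in \Gamma\}$ from the set $\{\phi\} \cup \{\chi \mid \Diamond_j\chi \notin \Gamma\}$: using $(\AKbox_j)$ one shows $\{\psi \mid \Box_j\psi \in \Gamma\} \not\vdash \phi$ (otherwise $\Box_j\phi \in \Gamma$), while $(\ACdia_j)$ shows the forbidden set is closed under disjunction, so a prime extension $\Delta$ exists with $\Gamma R_j^c \Delta$ and $\phi \notin \Delta$. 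With these in hand, the truth lemma $\phi \in \Gamma \iff \mo{M}^c, \Gamma \Vdash \phi$ follows by induction on $\phi$, the propositional and implication cases being standard for intuitionistic logic, and strong completeness is immediate: if $\Gamma \not\vdash_{\IK_J} \phi$, a prime extension of $\Gamma$ omitting $\phi$ is a point of $\mo{M}^c$ satisfying $\Gamma$ but not $\phi$.

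I expect the box existence lemma to be the main obstacle, since it is there that the interaction between the intuitionistic order $\subseteq$ and the modal relation $R_j^c$ is most delicate: one must simultaneously realise the underivability of $\phi$ from $\{\psi \mid \Box_j\psi \in \Gamma\}$ and preserve the backward condition $\{\Diamond_j\chi \mid \chi \in \Delta\} \subseteq \Gamma$, which is exactly where the Fischer Servi axioms $(\ADualI_j)$ and $(\AKdia_j)$ do their work. Everything genuinely new to the multimodal setting, by contrast, reduces to observing that these constructions can be performed for each index in isolation.
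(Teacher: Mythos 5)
Your overall architecture (soundness by induction on derivations; completeness via prime theories, the relation $\Gamma R_j^c \Delta$ iff $\{\psi \mid \Box_j\psi \in \Gamma\} \subseteq \Delta$ and $\{\Diamond_j\psi \mid \psi \in \Delta\} \subseteq \Gamma$, treated one index at a time) is exactly what the paper intends---its own ``proof'' is only a pointer to Simpson plus the phrase ``canonical model construction''---but your box existence lemma is false as stated, and this is a genuine gap rather than a missing detail. You claim that from $\Box_j\phi \notin \Gamma$ one can directly produce a prime $\Delta$ with $\Gamma R_j^c \Delta$ and $\phi \notin \Delta$, by separating $\{\psi \mid \Box_j\psi \in \Gamma\}$ from $\{\phi\} \cup \{\chi \mid \Diamond_j\chi \notin \Gamma\}$. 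That separation can fail. Consider the $\IK_J$-model with $W = \{x,y,z\}$, $\leq$ the reflexive closure of $x \leq y$, $R_j = \{(y,z)\}$ (all other relations empty), $V(p) = \emptyset$, $V(q) = \{z\}$; both conditions of Definition~\ref{def:kf} hold. Let $\Gamma = \{\psi \mid x \Vdash \psi\}$, a prime theory by your soundness half. Then $\Box_j(p \vee q) \in \Gamma$ while $\Box_j p \notin \Gamma$, $\Diamond_j q \notin \Gamma$, and indeed $\Diamond_j\top \notin \Gamma$. Any prime $\Delta \supseteq \{\psi \mid \Box_j\psi \in \Gamma\}$ contains $p \vee q$, hence $p$ or $q$; avoiding $p$ forces $q \in \Delta$, whence $\Diamond_j q \in \Gamma$ by the backward half of $R_j^c$---a contradiction. (In fact $\Gamma$ has no $R_j^c$-successor at all, since $\top \in \Delta$ would force $\Diamond_j\top \in \Gamma$.) So no $\Delta$ of the kind you want exists even though $\Box_j p \notin \Gamma$. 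This is precisely why the clause for $\Box_j$ in Definition~\ref{def:kf} quantifies over $\leq$ before $R_j$: the correct existence lemma must produce a \emph{pair} of prime theories $\Gamma' \supseteq \Gamma$ and $\Delta$ with $\Gamma' R_j^c \Delta$ and $\phi \notin \Delta$ (in the example, $\Gamma' = \th(y)$ and $\Delta = \th(z)$), and $\Gamma'$ and $\Delta$ must be built simultaneously: each time a formula $\delta$ is committed to $\Delta$, the formula $\Diamond_j\delta$ is committed to $\Gamma'$, and the consistency of doing so while keeping $\phi$ out of $\Delta$ is exactly what $(\ADualI_j)$ guarantees, via: if $\Gamma' \vdash \Diamond_j\delta \to \Box_j\phi$ then $\Gamma' \vdash \Box_j(\delta \to \phi)$, which would force $\delta \to \phi$, and hence $\phi$, into $\Delta$.

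A second, smaller problem is that your attribution of axioms to the canonical frame conditions is swapped. Transporting an $R_j^c$-successor upward along $\subseteq$ (condition~\eqref{it:kf-1}: given $\Gamma \subseteq \Gamma'$ and $\Gamma R_j^c \Delta$, extend $\Delta$ to $\Delta'$ with $\Gamma' R_j^c \Delta'$) is proved by enlarging $\Delta \cup \{\psi \mid \Box_j\psi \in \Gamma'\}$ while avoiding $\{\chi \mid \Diamond_j\chi \notin \Gamma'\}$, and the obstruction analysis runs through $(\AKdia_j)$, $(\ACdia_j)$ and $(\ANdia_j)$, not $(\ADualI_j)$. Dually, the companion condition (given $\Gamma R_j^c \Delta$ and $\Delta \subseteq \Delta'$, find $\Gamma' \supseteq \Gamma$ with $\Gamma' R_j^c \Delta'$) is where $(\ADualI_j)$ does its work, by blocking formulas $\Box_j\psi$ with $\psi \notin \Delta'$ from entering $\Gamma'$. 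Relatedly, $(\ACdia_j)$ belongs in your \emph{diamond} existence lemma---it is what makes $\{\chi \mid \Diamond_j\chi \notin \Gamma\}$ closed under disjunction, which your Lindenbaum-style extension needs---rather than in the box case. None of this affects your soundness argument or the observation that the indices can be handled independently, both of which are fine.
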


\end{document}